\begin{document}

\newenvironment{proof}[1][Proof]{\textbf{#1.} }{\ \rule{0.5em}{0.5em}}

\newtheorem{theorem}{Theorem}[section]
\newtheorem{definition}[theorem]{Definition}
\newtheorem{lemma}[theorem]{Lemma}
\newtheorem{remark}[theorem]{Remark}
\newtheorem{proposition}[theorem]{Proposition}
\newtheorem{corollary}[theorem]{Corollary}
\newtheorem{example}[theorem]{Example}

\numberwithin{equation}{section}
\newcommand{\ep}{\varepsilon}
\newcommand{\R}{{\mathbb  R}}
\newcommand\C{{\mathbb  C}}
\newcommand\Q{{\mathbb Q}}
\newcommand\Z{{\mathbb Z}}
\newcommand{\N}{{\mathbb N}}

\newcommand{\bfi}{\bfseries\itshape}

\newsavebox{\savepar}
\newenvironment{boxit}{\begin{lrbox}{\savepar}
\begin{minipage}[b]{15.5cm}}{\end{minipage}\end{lrbox}
\fbox{\usebox{\savepar}}}

\title{{\bf On differential operators generated by geometric structures}}
\author{R\u{a}zvan M. Tudoran}

\date{}
\maketitle \makeatother

\begin{abstract}
We extend to manifolds endowed with a general geometric structure, the classical notions of gradient as well as Laplace operator, and provide some of their natural properties.
\end{abstract}

\medskip

\textbf{MSC 2020}: 58C06; 58J60; 58C35.

\textbf{Keywords}: geometric structures; geometric brackets; gradient--like vector fields; Laplace--like operators; Green's identities; Dirichlet energy.

\section{Introduction}
\label{section:one}

The main purpose of this article is to introduce and study some natural differential operators generated by geometric structures on general manifolds, where by geometric structure we mean any non--degenerate $(0,2)-$tensor field (e.g., Riemannian or semi--Riemannian metrics, Lorentzian metrics, symplectic structures). More precisely, the first class of operators, extend to manifolds endowed with geometric structures, the concepts of gradient vector field from Riemannian or semi--Riemannian geometry, and Hamiltonian vector field from symplectic geometry, while the second class of operators, generalize to manifolds with geometric structures the Laplace--Beltrami operator from Riemannian geometry, as well as the d'Alembert operator from Lorentzian geometry. On vector spaces, these classes of operators were recently introduced in \cite{TDR}.

The structure of the article is the following. In the second section we introduce the concept of geometric structure on a general manifold, the notion of geometromorphism (i.e., a diffeomorphism that preserves geometric structures), and provide some of their natural properties. Also here we introduce the notion of left/right--adjoint of a $(1,1)-$tensor field with respect to a geometric structure. The third section is devoted to the study of two of the main protagonists of this work, namely, the left/right--gradient vector fields, naturally associated to a general geometric structure. These vector fields extend to geometric manifolds the notion of gradient vector field from the Riemannian and semi--Riemannian geometry, as well as the Hamiltonian vector field from the symplectic geometry. Using these special vector fields, we prove the existence of a natural Leibniz bracket on each manifold endowed with a geometric structure, and we also rediscover the left/right--gradient vector fields as being the left/right--Leibniz vector fields generated by this bracket. Moreover, various relations between the gradient--like vector fields on a manifold with geometric structure and the Leibniz vector fields associated to the symmetric or skew--symmetric part of the above mentioned Leibniz bracket, are also given. In the fourth section we define left/right--Laplace operators generated by a general geometric structure on a manifold, and study some of their natural properties. Among other things, we provide Green's identities for these operators, and give the Euler--Lagrange equation for Dirichlet energy associated to a general geometric structure. The last section of this article is devoted to a dynamical approach of the gradient--like vector fields, associated to a general geometric structure. Here we discuss the compatibility of these vector fields with the Leibniz bracket of a manifold endowed with geometric structure, we give a natural transport theorem, and provide a non--existence result regarding periodic orbits.

We end the introduction by mentioning that throughout this article, all manifolds are assumed to be connected, Hausdorff, paracompact, and smooth. Moreover, in order to have a unitary presentation, and also to avoid intricate formulations, all mappings are assumed to be smooth.

\section{Geometric structures on a manifold and some natural properties}

In this section we introduce the concept of \textit{geometric structure} on a general manifold, and provide some of their natural properties which will be of crucial importance on the rest of the article.

\subsection{Geometric structures}

Let us start by introducing the notion of \textit{geometric structure}, the concept on which relies the whole construction of the article. This is a generalization of the similar notion from the linear framework, recently introduced in \cite{TDR}.

\begin{definition} 
We call \textbf{geometric structure} on a manifold $M$, any non-degenerate $(0,2)-$tensor field $b:\mathfrak{X}(M)\times\mathfrak{X}(M)\rightarrow\mathcal{C}^{\infty}(M,\mathbb{R})$. The pair $(M,b)$ will be called a \textbf{geometric manifold}, or a \textbf{manifold with geometric structure}.
\end{definition}
If $x=(x^1,\dots,x^n)$ denote a set of local coordinates on $M$, where $n=\operatorname{dim}M$, then a geometric structure $b$ is locally represented by the non--singular matrix 
\begin{equation*}
\mathcal{B}_x:=(\mathcal{B}_{ij}(x))_{1\leq i,j\leq n}, ~~ \mathcal{B}_{ij}:=b\left(\dfrac{\partial}{\partial x^i},\dfrac{\partial}{\partial x^j}\right).
\end{equation*}
Notice that Riemannian, semi--Riemannian, symplectic, almost symplectic, Lorentzian structures, are all particular examples of geometric structures. Of course, in order to exist, most geometric structures require additional properties of the ambient manifold (e.g., an almost symplectic structure on a manifold requires the manifold be orientable and even-dimensional; a Lorentzian structure requires the manifold be either non compact or compact with zero Euler characteristic, etc.).

Let us point out a simple method to generate more geometric structures, starting from a given one. More precisely, on a general geometric manifold, $(M,b)$, \textbf{any} non-degenerate $(1,1)-$tensor field, $T:\mathfrak{X}(M)\rightarrow\mathfrak{X}(M)$, \textbf{induces two more geometric structures}, $b_{T}^{L},b_{T}^{R}:\mathfrak{X}(M)\times\mathfrak{X}(M)\rightarrow\mathcal{C}^{\infty}(M,\mathbb{R})$, given by
\begin{align*}
b_{T}^{L}(X,Y):=b(TX,Y),~\forall X,Y\in\mathfrak{X}(M); ~ b_{T}^{R}(X,Y):=b(X,TY),~\forall X,Y\in\mathfrak{X}(M).
\end{align*}
Recall that particular examples of non--degenerate $(1,1)-$tensor fields include almost compex structures, almost product structures, almost golden structures, etc.

Conversely, to any given pair of geometric structures defined on a manifold, one can naturally associate various non--degenerate $(1,1)-$tensor fields, essentially, following backwards the previous construction. An important particular case is provided by a geometric structure defined on a Riemannian manifold. 
\begin{remark}\label{RI1}
If $(M,g)$ is a Riemannian manifold and $b$ a geometric structure on $M$, then the non-degeneracy of $b$ implies the existence of a non-degenerate $(1,1)-$tensor field $B_g :\mathfrak{X}(M)\rightarrow\mathfrak{X}(M)$ uniquely defined by the relation
$$
g(X,Y)=b(X,B_{g}Y), ~\forall X,Y\in\mathfrak{X}(M).
$$
The pair $(b,B_g)$ will be called the \textbf{geometric pair} associated to the geometric structure $b$.
\end{remark}
As already mentioned, the above construction can be generalized mimetically to the case of an arbitrary fixed geometric manifold, together with an additional geometric structure $b$. However, we focused on this particular case, because of the rich geometry offered by the Riemannian settings, thus making it easier to analyze the geometry of $b$ with respect to it.

Let us now give a local representation of the quantities introduced in Remark \ref{RI1}. Thus, relative to a set of local coordinates, $x=(x^1,\dots,x^n)$, where $n=\operatorname{dim}M$, the metric $g$ is represented by the symmetric and positive definite matrix 
\begin{equation*}
G_x:=(g_{ij}(x))_{1\leq i,j\leq n}, ~ g_{ij}=g\left(\dfrac{\partial}{\partial x^i},\dfrac{\partial}{\partial x^j}\right),
\end{equation*}
the geometric structure $b$ is represented by the non--singular matrix 
\begin{equation*}
\mathcal{B}_x:=(\mathcal{B}_{ij}(x))_{1\leq i,j\leq n}, ~ \mathcal{B}_{ij}=b\left(\dfrac{\partial}{\partial x^i},\dfrac{\partial}{\partial x^j}\right),
\end{equation*}
and the tensor field $B_g$ is represented by the non--singular matrix 
\begin{equation*}
B_x:=(B^{i}_{j}(x))_{1\leq i,j\leq n}, ~ B^{i}_{j}(x)=\sum_{k=1}^{n}\mathcal{B}^{ik}(x)g_{kj}(x), ~ where ~ (\mathcal{B}^{ij}(x))_{1\leq i,j\leq n}:=\mathcal{B}^{-1}_x.
\end{equation*}

\subsection{Geometromorphisms}

Next step that follows naturally after introducing geometric manifolds is to see how this notion is preserved through maps between such manifolds. In order to do this, we define a class of diffeomorphisms, which preserve geometric structures. These maps will suggestively be called \textit{geometromorphisms}, they becoming isometries in the category of Riemannian, semi--Riemannian, or Lorentzian geometric structures, and symplectomorphism in the category of symplectic geometric structures.

\begin{definition}
Let $(M,b^M)$ and $(N,b^N)$ be two manifolds endowed with geometric structures. A \textbf{geometromorphism} from $(M,b^M)$ to $(N,b^N)$ is a diffeomorphism $\Phi:M\rightarrow N$ which preserves the geometric structures, i.e., $\Phi^{\star}b^N=b^M$,
where
\begin{align*}
(\Phi^{\star}b^{N})(X,Y):=\Phi^{\star}(b^{N}(\Phi_{\star}X,\Phi_{\star}Y)), ~\forall X,Y\in\mathfrak{X}(M),
\end{align*}
$\Phi_{\star}X:=\mathrm{D}\Phi\circ X\circ\Phi^{-1}\in\mathfrak{X}(N), ~\forall X\in\mathfrak{X}(M)$, and $\Phi^{\star}F:=F\circ\Phi\in\mathcal{C}^{\infty}(M,\mathbb{R}), ~\forall F\in\mathcal{C}^{\infty}(N,\mathbb{R})$.
\end{definition}

In local coordinates, $x=(x^1,\dots,x^n)$ (where $n=\operatorname{dim}M$), if $(\mathcal{B}^{M}_{ij}(x))_{1\leq i,j\leq n}$ and $(\mathcal{B}^{N}_{kl}(\Phi(x)))_{1\leq k,l\leq n}$ denote the local representations of the geometric structures $b^M$ and $b^N$ respectively, the local representation of the geometromorphism $\Phi$ has to satisfy
\begin{align*}
\mathcal{B}^{M}_{ij}(x)=\sum_{1\leq k,l\leq n}\mathcal{B}^{N}_{kl}(\Phi(x))\dfrac{\partial \Phi^k}{\partial x^i}\dfrac{\partial \Phi^l}{\partial x^j}, ~\forall i,j\in\{1,\dots,n\}.
\end{align*}

Next, we prove that the set of all geometromorphisms of a given manifold $M$ endowed with a geometric structure $b$, forms a group, called the \textit{group of geometromorphisms} of $(M,b)$.
\begin{theorem}
Let $b$ be a geometric structure on a manifold $M$. Then the set of all geometromorphisms of $(M,b)$, denoted by $\mathcal{G}(M,b):=\{\Phi\in\operatorname{Diff}(M)\mid \Phi^{\star}b=b\}$, is a subgroup of the group of diffeomorphisms of $M$.
\end{theorem}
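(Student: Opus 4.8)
The plan is to verify the three subgroup axioms—containment of the identity, closure under composition, and closure under inversion—directly from the defining relation $\Phi^{\star}b=b$. The entire argument rests on establishing the contravariant functoriality of the pullback, namely that for any two diffeomorphisms $\Phi,\Psi\in\operatorname{Diff}(M)$ one has $(\Phi\circ\Psi)^{\star}=\Psi^{\star}\circ\Phi^{\star}$, both on smooth functions and on the $(0,2)$-tensor field $b$. Once this is in hand, all three axioms follow formally; note that all operations are well defined because every element of $\mathcal{G}(M,b)$ is a diffeomorphism, so $\Phi^{-1}$ and $\Phi_{\star}$ always make sense.

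First I would record the two elementary functoriality facts on which everything hinges. On functions, $(\Phi\circ\Psi)^{\star}F=F\circ(\Phi\circ\Psi)=(F\circ\Phi)\circ\Psi=\Psi^{\star}(\Phi^{\star}F)$, which is immediate from the definition $\Phi^{\star}F=F\circ\Phi$. On vector fields, the pushforward satisfies $(\Phi\circ\Psi)_{\star}=\Phi_{\star}\circ\Psi_{\star}$; this is a direct consequence of the chain rule $\mathrm{D}(\Phi\circ\Psi)=(\mathrm{D}\Phi\circ\Psi)\cdot\mathrm{D}\Psi$ together with $(\Phi\circ\Psi)^{-1}=\Psi^{-1}\circ\Phi^{-1}$, after unwinding the definition $(\Phi\circ\Psi)_{\star}X=\mathrm{D}(\Phi\circ\Psi)\circ X\circ(\Phi\circ\Psi)^{-1}$. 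Combining these two yields the functoriality at the level of $b$: expanding $((\Phi\circ\Psi)^{\star}b)(X,Y)=(\Phi\circ\Psi)^{\star}\bigl(b((\Phi\circ\Psi)_{\star}X,(\Phi\circ\Psi)_{\star}Y)\bigr)$ and substituting the two identities above, one recognizes the right-hand side as $(\Psi^{\star}(\Phi^{\star}b))(X,Y)$, hence $(\Phi\circ\Psi)^{\star}b=\Psi^{\star}(\Phi^{\star}b)$.

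With the functoriality established, the three axioms are routine. The identity map satisfies $(\mathrm{id}_{M})_{\star}X=X$ and $(\mathrm{id}_{M})^{\star}F=F$, so $\mathrm{id}_{M}^{\star}b=b$ and $\mathrm{id}_{M}\in\mathcal{G}(M,b)$. For closure under composition, if $\Phi^{\star}b=b$ and $\Psi^{\star}b=b$, then $(\Phi\circ\Psi)^{\star}b=\Psi^{\star}(\Phi^{\star}b)=\Psi^{\star}b=b$, and $\Phi\circ\Psi$ is again a diffeomorphism. For closure under inversion, given $\Phi^{\star}b=b$ I would apply $(\Phi^{-1})^{\star}$ to both sides and use functoriality in the form $(\Phi^{-1})^{\star}\circ\Phi^{\star}=(\Phi\circ\Phi^{-1})^{\star}=\mathrm{id}_{M}^{\star}=\mathrm{id}$, obtaining $(\Phi^{-1})^{\star}b=(\Phi^{-1})^{\star}(\Phi^{\star}b)=b$, so $\Phi^{-1}\in\mathcal{G}(M,b)$.

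I expect the only point requiring genuine care to be the functoriality of the pushforward on vector fields and its compatibility with the function pullback inside the definition of $\Phi^{\star}b$; everything else is formal manipulation. In particular, the non-degeneracy of $b$ plays no role here—it is the functoriality of the pullback, and not any special property of $b$ itself, that makes $\mathcal{G}(M,b)$ a subgroup of $\operatorname{Diff}(M)$.
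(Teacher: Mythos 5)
Your proof is correct and follows essentially the same route as the paper: verifying the identity, closure under composition, and closure under inversion via the functoriality identity $(\Phi\circ\Psi)^{\star}b=\Psi^{\star}(\Phi^{\star}b)$. The only difference is that you prove this functoriality explicitly from the chain rule, whereas the paper uses it implicitly without comment; the argument is otherwise identical.
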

\begin{proof}
By the very definition of geometromorphisms we have the inclusion $\mathcal{G}(M,b)\subseteq \operatorname{Diff}(M)$. Notice that the identity map, $\operatorname{Id}_{M}$, is obviously a geometromorphism. Next, we prove that for any $\Phi,\Psi\in\mathcal{G}(M,b)$, their composition, $\Phi\circ\Psi$, belongs to $\mathcal{G}(M,b)$. Indeed, as $\Phi^{\star}b=b$ and $\Psi^{\star}b=b$ it follows that
$$
(\Phi\circ\Psi)^{\star}b=\Psi^{\star}(\Phi^{\star}b)=\Psi^{\star}b=b.
$$
In order to finish the proof we show that for any $\Phi\in\mathcal{G}(M,b)$ the inverse map, $\Phi^{-1}\in\operatorname{Diff}(M)$, is also an element of $\mathcal{G}(M,b)$. This follows directly taking into account that $\Phi^{\star}b=b$ and hence
$$
(\Phi^{-1})^{\star}b=(\Phi^{-1})^{\star}(\Phi^{\star}b)=(\Phi\circ\Phi^{-1})^{\star}b=(\operatorname{Id}_{M})^{\star}b=b.
$$
\end{proof}

\subsection{The left/right--adjoint of a $(1,1)-$tensor field}

The aim of this subsection is to extend to geometric manifolds the notion of adjoint associated to $(1,1)-$tensor fields from the Riemannian or semi--Riemannian setting.

\begin{definition}\label{starLR}
Let $b$ be a geometric structure on a manifold $M$, and $A:\mathfrak{X}(M)\rightarrow\mathfrak{X}(M)$ an $(1,1)-$tensor field. Then there exist two $(1,1)-$tensor fields, denoted by $A^{\star_{L}},A^{\star_{R}}:\mathfrak{X}(M)\rightarrow\mathfrak{X}(M)$ and called the \textit{left-adjoint}, respectively the \textit{right-adjoint} of $A$ with respect to the geometric structure $b$, uniquely defined by the following relations:
\begin{equation*}
b(A^{\star_{L}}X, Y)= b(X, A Y), ~~\forall  X,Y\in\mathfrak{X}(M),
\end{equation*}
\begin{equation*}
b(A X, Y)= b(X, A^{\star_{R}}Y), ~~\forall  X,Y\in\mathfrak{X}(M).
\end{equation*}
\end{definition}
In local coordinates, $x=(x^1,\dots,x^n)$, where $n=\operatorname{dim}M$, the above tensor fields are represented by the matrices
\begin{equation*}
A^{\star_{L}}_{x}=\mathcal{B}_{x}^{-\top}A_{x}^{\top}\mathcal{B}_{x}^{\top}, ~~ A^{\star_{R}}_{x}=\mathcal{B}_{x}^{-1}A_{x}^{\top}\mathcal{B}_{x}.
\end{equation*}
In the case when $b$ is a geometric structure on a Riemannian manifold $(M,g)$, since $G_x=\mathcal{B}_{x}B_x$, the above local expressions can be equivalently written as
\begin{equation*}
A^{\star_{L}}_{x}=G_{x}^{-1}B_{x}^{\top}A_{x}^{\top}B_{x}^{-\top}G_{x}, ~~ A^{\star_{R}}_{x}=B_{x}G_{x}^{-1}A_{x}^{\top}G_{x}B_{x}^{-1}.
\end{equation*}
Note that the left and right adjoint (of a $(1,1)-$tensor field $A$) with respect to the Riemannian metric tensor $g$ coincide, and are locally given by
\begin{equation*}
A^{\star_{g}}_{x}=G_{x}^{-1}A_{x}^{\top}G_{x}, ~ \text{as in this case} ~ B_x=I_n.
\end{equation*}

Let us provide now the main properties of the left/right--adjoint operators associated to a geometric manifold.
\begin{proposition}
Let $(M,b)$ be a geometric manifold, $A,B:\mathfrak{X}(M)\rightarrow\mathfrak{X}(M)$ two $(1,1)-$tensor fields, and $F,G\in\mathcal{C}^{\infty}(M,\mathbb{R})$. Then the following equalities hold
\begin{itemize}
\item[(i)] $(FA+GB)^{\star_{L}/\star_{R}}=FA^{\star_{L}/\star_{R}}+GB^{\star_{L}/\star_{R}}$,
\item[(ii)] $(A^{\star_{L}})^{\star_{R}}=A, ~~ (A^{\star_{R}})^{\star_{L}}=A$,
\item[(iii)] $(AB)^{\star_{L}/\star_{R}}=B^{\star_{L}/\star_{R}}A^{\star_{L}/\star_{R}}$,
\item[(iv)] $(A^{-1})^{\star_{L}/\star_{R}}=(A^{\star_{L}/\star_{R}})^{-1}$, if $A$ is invertible,
\item[(v)] $A^{\star_{L}}=A^{\star_{R}}$, if $b$ is symmetric or skew--symmetric.
\end{itemize}
\end{proposition}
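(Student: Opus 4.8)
The plan is to derive each property directly from the two defining relations of Definition \ref{starLR}, using the non-degeneracy of $b$ as a cancellation principle: since $b$ is non-degenerate, $b(Z,Y)=b(W,Y)$ for every $Y\in\mathfrak{X}(M)$ forces $Z=W$, and likewise in the second slot. Together with the $\mathcal{C}^{\infty}(M,\mathbb{R})$-bilinearity of $b$, this reduces the verification of an identity of the form $P^{\star_{L}}=Q$ to checking that $Q$ satisfies the equation defining $P^{\star_{L}}$, namely $b(QX,Y)=b(X,PY)$ for all $X,Y$, and symmetrically for the right-adjoint.

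For (i), I would expand $b((FA+GB)^{\star_{L}}X,Y)=b(X,(FA+GB)Y)$ and use bilinearity to pull out the functions $F,G$, matching the right-hand side with $b((FA^{\star_{L}}+GB^{\star_{L}})X,Y)$; cancellation then gives the claim, and the right-adjoint case is identical. For (ii), applying the definition of the right-adjoint to the tensor $A^{\star_{L}}$ gives $b(A^{\star_{L}}X,Y)=b(X,(A^{\star_{L}})^{\star_{R}}Y)$, while the definition of the left-adjoint of $A$ gives $b(A^{\star_{L}}X,Y)=b(X,AY)$; equating and cancelling in the first slot yields $(A^{\star_{L}})^{\star_{R}}=A$, and symmetrically for the other equality.

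The heart of the argument is (iii), where the order-reversal appears. I would compute $b((AB)^{\star_{L}}X,Y)=b(X,ABY)$ and then peel off the composition one factor at a time: first move $A$ across using its left-adjoint to obtain $b(A^{\star_{L}}X,BY)$, then move $B$ across to obtain $b(B^{\star_{L}}A^{\star_{L}}X,Y)$; cancellation gives $(AB)^{\star_{L}}=B^{\star_{L}}A^{\star_{L}}$, and the right-adjoint case is strictly analogous. Property (iv) is then a formal consequence: one first checks that the identity tensor is self-adjoint, $\operatorname{Id}^{\star_{L}}=\operatorname{Id}$ (immediate from the definition), and then applies (iii) to $AA^{-1}=\operatorname{Id}$ to get $(A^{-1})^{\star_{L}}A^{\star_{L}}=\operatorname{Id}$, whence $(A^{-1})^{\star_{L}}=(A^{\star_{L}})^{-1}$. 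Finally, for (v), I would use the symmetry (or skew-symmetry) of $b$ to convert the defining relation of the left-adjoint into that of the right-adjoint: starting from $b(A^{\star_{L}}X,Y)=b(X,AY)$, swapping the arguments of $b$ on both sides and then applying the definition of the right-adjoint produces $b(A^{\star_{R}}X,Y)$ (in the skew-symmetric case the sign that is picked up appears on both sides and cancels), and cancellation gives $A^{\star_{L}}=A^{\star_{R}}$.

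Since each step is an elementary manipulation, I do not anticipate a genuine obstacle; the only point demanding care is consistent bookkeeping of the left versus right slots — in particular the order reversal in (iii) and the cancelling sign in the skew-symmetric case of (v).
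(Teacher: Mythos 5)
Your proof is correct and takes exactly the route the paper intends: the paper's entire proof is the single line that all relations follow directly from Definition \ref{starLR}, and your argument simply spells out those direct verifications using the $\mathcal{C}^{\infty}(M,\mathbb{R})$-bilinearity and non-degeneracy of $b$. One cosmetic point: in (iv) you should also apply (iii) to $A^{-1}A=\operatorname{Id}$ (or note that a one-sided inverse of a fiberwise endomorphism of a finite-dimensional tangent space is automatically two-sided) so that $(A^{-1})^{\star_{L}}$ is a genuine two-sided inverse of $A^{\star_{L}}$.
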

\begin{proof}
All relations follow directly from Definition \ref{starLR}.
\end{proof}

In the case of a geometric structure defined on a Riemannian manifold, some specific compatibilities between the associated adjoint operators are presented below.
\begin{remark}\label{UY}
Let $b$ be a geometric structure on a Riemannian manifold $(M,g)$, and $(b,B_g)$ the associated geometric pair. Then $B_{g}^{\star_{L}}= B_{g}^{\star_{g}}$, and $B_{g}^{\star_{R}}= B_{g}B_{g}^{\star_{g}}B_{g}^{-1}$, where $B_{g}^{\star_g}$ stands for the adjoint of $B_g$ with respect to the Riemannian metric $g$. Moreover, if $b$ is symmetric (skew--symmetric) then $B_{g}^{\star_{g}}=B_{g}$ ($B_{g}^{\star_{g}}= - B_{g}$).
\end{remark}
In local coordinates, $x=(x^1,\dots,x^n)$, where $n=\operatorname{dim}M$, the tensor fields from Remark \ref{UY} are represented by the matrices
\begin{equation*}
B^{\star_{L}}_{x}=G_{x}^{-1}B_{x}^{\top}G_{x}=B_{x}^{\star_g}, ~~ B^{\star_{R}}_{x}=B_{x}G_{x}^{-1}B_{x}^{\top}G_{x}B_{x}^{-1}=B_{x}B_{x}^{\star_g}B_{x}^{-1}.
\end{equation*}
Note that, as $\mathcal{B}_x =G_x B^{-1}_{x}$, the following relations hold
\begin{itemize}
\item[(i)] \text{if} $b$ \text{is symmetric, then} $\mathcal{B}_{x}^{\top}=\mathcal{B}_x \Leftrightarrow G^{-1}_{x}B^{\top}_{x}=B_{x}G^{-1}_{x}$,
\item[(ii)] \text{if} $b$ \text{is skew--symmetric, then} $\mathcal{B}_{x}^{\top}=-\mathcal{B}_x \Leftrightarrow G^{-1}_{x}B^{\top}_{x}=-B_{x}G^{-1}_{x}$.
\end{itemize}

\section{Gradient--like vector fields generated by geometric structures}

In this section we introduce two of the main protagonists of this work, namely, the left/right--gradient vector fields, naturally associated to a general geometric structure.  These vector fields extend to geometric manifolds the notion of gradient vector field from the Riemannian and semi--Riemannian geometry, as well as the Hamiltonian vector field from the symplectic geometry. On vector spaces endowed with constant geometric structures, the gradient--like vector fields were recently  introduced in \cite{TDR}.

\begin{definition}
Let $(M,b)$ be a geometric manifold, and $U \subseteq M$ an open set. Then for every $F\in\mathcal{C}^{\infty}(U,\mathbb{R})$, the left--gradient of $F$, denoted by $\nabla^{L}_{b}F$, is the vector field uniquely defined by the relation
$$
b(\nabla^{L}_{b}F,X)=\mathrm{d}F\cdot X, ~ \forall X\in\mathfrak{X}(U).
$$
Similarly, the right--gradient of $F$, denoted by $\nabla^{R}_{b}F$, is the vector field uniquely defined by the relation
$$
b(X,\nabla^{R}_{b}F)=\mathrm{d}F\cdot X, ~ \forall X\in\mathfrak{X}(U).
$$
\end{definition}
Note that $\mathcal{C}^1$ is the sufficient regularity class in order the definition of left/right--gradients hold. Nevertheless, we choose to work in the smooth class, for having a unitary approach throughout the article, and also to avoid intricate formulations.

In terms of local coordinates, $x=(x^1,\dots,x^n)$, where $n=\operatorname{dim}M$, the left/right--gradient vector fields are represented by the matrices
\begin{equation*}
(\nabla^{L}_{b}F)_{x}=\mathcal{B}^{-\top}_{x}(\mathrm{d}F)_{x}, ~~ (\nabla^{R}_{b}F)_{x}=\mathcal{B}^{-1}_{x}(\mathrm{d}F)_{x},
\end{equation*}
where $(\mathrm{d}F)_x:=\left(\dfrac{\partial F}{\partial x^1}(x)~\dots~\dfrac{\partial F}{\partial x^n}(x)\right)^{\top}$. Using the notation $(\mathcal{B}^{ij}(x))_{1\leq i,j\leq n}:=\mathcal{B}^{-1}_x$, we obtain
\begin{equation}\label{grdloc}
\nabla^{L}_{b}F=\sum_{i=1}^{n}\left(\sum_{j=1}^{n}\mathcal{B}^{ji}\dfrac{\partial F}{\partial{x^{j}}}\right)\dfrac{\partial}{\partial{x^{i}}}, ~~ \nabla^{R}_{b}F=\sum_{i=1}^{n}\left(\sum_{j=1}^{n}\mathcal{B}^{ij}\dfrac{\partial F}{\partial{x^{j}}}\right)\dfrac{\partial}{\partial{x^{i}}}.
\end{equation}

\begin{remark}\label{REMI}
Note that if $b$ is symmetric (i.e., $(M,b)$ is a Riemannian or semi--Riemannian manifold) then $\nabla^{L}_{b}=\nabla^{R}_{b}$. Particularly, when $b=g$ is a Riemannian metric then $\nabla^{L}_{b}=\nabla^{R}_{b}=\nabla_{g}$, where $\nabla_g$ stands for the gradient operator on the Riemannian manifold $(M,g)$, defined for any function $F\in\mathcal{C}^{\infty}(U,\mathbb{R})$ by the relation
\begin{equation*}
g(\nabla_{g} F,X)=\mathrm{d}F\cdot X, ~ \forall X\in\mathfrak{X}(U).
\end{equation*}
On the other hand, if $b$ is skew--symmetric (i.e., $(M,b)$ is an almost symplectic manifold) then $\nabla^{L}_{b}= - \nabla^{R}_{b}$. If moreover, $\mathrm{d}b=0$ (i.e., $(M,b)$ is a symplectic manifold), then for any $H\in\mathcal{C}^{\infty}(M,\mathbb{R})$, the associated Hamiltonian vector field, $X_H$ (defined by the relation $\mathbf{i}_{X_H}b=\mathrm{d}H$), coincides with $\nabla_{b}^{L}H=-\nabla_{b}^{R}H$.
\end{remark}

In the case when $b$ is a general geometric structure on a Riemannian manifold, $(M,g)$, the relation between the operators $\nabla^{L}_{b}$, $\nabla^{R}_{b}$ is given by the following result which is a direct consequence of the definition of $\nabla^{L}_{b}$, $\nabla^{R}_{b}$, and $\nabla_g$.

\begin{proposition}\label{rlgrd}
Let $b$ be a geometric structure on a Riemannian manifold $(M,g)$, and let $(b,B_g)$ be the associated geometric pair. Then for any fixed open set $U\subseteq M$, and for every $F\in\mathcal{C}^{\infty}(U,\mathbb{R})$, we have that $\nabla^{L}_{b}F=B_{g}^{\star_{g}}\nabla_{g} F$, $\nabla^{R}_{b}F=B_{g} \nabla_{g} F$, and $\nabla^{L}_{b}F=B_{g}^{\star_{g}}B_{g}^{-1}\nabla^{R}_{b}F$.
\end{proposition}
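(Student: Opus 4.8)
The plan is to verify each of the three claimed identities directly against the defining relations of $\nabla^{L}_{b}$, $\nabla^{R}_{b}$, and $\nabla_{g}$, in each case producing a vector field that satisfies the characterizing equation and then invoking the built-in uniqueness. The single computational device I expect to need is a rewriting of the geometric pair relation: starting from $g(X,Y)=b(X,B_{g}Y)$ and substituting $Y=B_{g}^{-1}Z$ (legitimate since non-degeneracy of $b$ makes $B_g$ invertible) yields $b(X,Z)=g(X,B_{g}^{-1}Z)$ for all $X,Z\in\mathfrak{X}(U)$. This converts every occurrence of $b$ into an expression in $g$ and $B_{g}$, after which the problem reduces to bookkeeping with $g$-adjoints.

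For the right-gradient, I would start from $b(X,\nabla^{R}_{b}F)=\mathrm{d}F\cdot X$, apply the rewriting to obtain $g(X,B_{g}^{-1}\nabla^{R}_{b}F)=\mathrm{d}F\cdot X$, and then use symmetry of $g$ to read this as $g(B_{g}^{-1}\nabla^{R}_{b}F,X)=\mathrm{d}F\cdot X$. Comparing with the defining relation $g(\nabla_{g}F,X)=\mathrm{d}F\cdot X$ and invoking uniqueness of the Riemannian gradient gives $B_{g}^{-1}\nabla^{R}_{b}F=\nabla_{g}F$, i.e.\ $\nabla^{R}_{b}F=B_{g}\nabla_{g}F$.

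For the left-gradient the same substitution applied to $b(\nabla^{L}_{b}F,X)=\mathrm{d}F\cdot X$ gives $g(\nabla^{L}_{b}F,B_{g}^{-1}X)=\mathrm{d}F\cdot X$; here the free test field sits inside $B_{g}^{-1}$, so I would transfer it across using the $g$-adjoint, writing $g(\nabla^{L}_{b}F,B_{g}^{-1}X)=g\big((B_{g}^{-1})^{\star_{g}}\nabla^{L}_{b}F,X\big)$. Invoking property (iv) of the adjoint proposition, which gives $(B_{g}^{-1})^{\star_{g}}=(B_{g}^{\star_{g}})^{-1}$, uniqueness then yields $(B_{g}^{\star_{g}})^{-1}\nabla^{L}_{b}F=\nabla_{g}F$, hence $\nabla^{L}_{b}F=B_{g}^{\star_{g}}\nabla_{g}F$. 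Finally, the third identity is immediate by eliminating $\nabla_{g}F$: the first two relations give $\nabla_{g}F=B_{g}^{-1}\nabla^{R}_{b}F$, so $\nabla^{L}_{b}F=B_{g}^{\star_{g}}\nabla_{g}F=B_{g}^{\star_{g}}B_{g}^{-1}\nabla^{R}_{b}F$.

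The only place demanding care — the ``hard part,'' though it is a mild one — is the left-gradient computation: one must correctly move the free argument out of $B_{g}^{-1}$ by means of the $g$-adjoint $(B_{g}^{-1})^{\star_{g}}$ rather than $B_{g}^{-1}$ itself, and then recognize the inversion identity for adjoints. One should also note that the argument is entirely pointwise on $U$ and rests throughout on non-degeneracy of $b$ (so that $B_{g}$ is invertible) together with the uniqueness clause in each gradient's defining relation.
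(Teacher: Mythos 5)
Your proof is correct, and it is essentially the paper's approach: the paper gives no written argument, asserting the proposition is "a direct consequence of the definition of $\nabla^{L}_{b}$, $\nabla^{R}_{b}$, and $\nabla_g$," and your verification (rewriting $b(X,Z)=g(X,B_g^{-1}Z)$ from the geometric pair relation, then invoking uniqueness in each defining identity, with the $g$-adjoint inversion $(B_g^{-1})^{\star_g}=(B_g^{\star_g})^{-1}$ handling the left-gradient case) is exactly the computation being alluded to, consistent with the local-coordinate formulas $(\nabla^{L}_{b}F)_{x}=G^{-1}_{x}B^{\top}_{x}(\mathrm{d}F)_x$ and $(\nabla^{R}_{b}F)_{x}=B_{x}G^{-1}_{x}(\mathrm{d}F)_x$ the paper records immediately afterwards.
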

In local coordinates, $x=(x^1,\dots,x^n)$, where $n=\operatorname{dim}M$, as $G_x=\mathcal{B}_{x}B_x$, the left/right--gradient vector fields are represented by the matrices
\begin{equation*}
(\nabla^{L}_{b}F)_{x}=\mathcal{B}^{-\top}_{x}(\mathrm{d}F)_{x}=G^{-1}_{x}B^{\top}_{x}(\mathrm{d}F)_x, ~~ (\nabla^{R}_{b}F)_{x}=\mathcal{B}^{-1}_{x}(\mathrm{d}F)_{x}=B_{x}G^{-1}_{x}(\mathrm{d}F)_x.
\end{equation*}

\subsection{Gradient--like vector fields and geometromorphisms}

This subsection is devoted to analyzing the compatibility between left/right--gradient vector fields and geometromorphisms.

\begin{theorem}\label{grdinvar}
Let $(M,b^M)$, $(N,b^{N})$ be two manifolds endowed with geometric structures, and let $\Phi:M\rightarrow N$ be a geometromorphism. Then the following relation holds true
\begin{equation}\label{gradrel}
\Phi_{\star}\circ\nabla^{L/R}_{b^{M}}\circ\Phi^{\star}=\nabla^{L/R}_{b^{N}},
\end{equation}
or explicitly,
\begin{align*}
\Phi_{\star}(\nabla^{L/R}_{b^{M}}(\Phi^{\star}F))=\nabla^{L/R}_{b^{N}}F, ~\forall F\in\mathcal{C}^{\infty}(N,\mathbb{R}).
\end{align*}
\end{theorem}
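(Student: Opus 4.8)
The plan is to verify the defining relation of the gradient directly, using the fact that both sides, when fed a test vector field $X \in \mathfrak{X}(N)$ and paired against $b^N$, produce the same scalar. Concretely, to show $\Phi_{\star}(\nabla^{L}_{b^M}(\Phi^{\star}F)) = \nabla^{L}_{b^N}F$, it suffices by non-degeneracy of $b^N$ to check that
\begin{equation*}
b^N\bigl(\Phi_{\star}(\nabla^{L}_{b^M}(\Phi^{\star}F)),\, Y\bigr) = b^N\bigl(\nabla^{L}_{b^N}F,\, Y\bigr) = \mathrm{d}F\cdot Y
\end{equation*}
for every $Y \in \mathfrak{X}(N)$. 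The right-hand equality is just the definition of $\nabla^{L}_{b^N}F$, so the whole task reduces to computing the left-hand side.

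The key computational step is to transport everything back to $M$ via the geometromorphism. First I would write an arbitrary $Y \in \mathfrak{X}(N)$ as $Y = \Phi_{\star}X$ for $X := \Phi^{\star}_{\mathrm{vec}}Y = \mathrm{D}\Phi^{-1}\circ Y \circ \Phi \in \mathfrak{X}(M)$, which is legitimate since $\Phi$ is a diffeomorphism and hence $\Phi_{\star}$ is a bijection on vector fields. Then I would apply the geometromorphism condition $\Phi^{\star}b^N = b^M$ in the form $b^N(\Phi_{\star}U, \Phi_{\star}V) = \Phi^{\star}\!\bigl(b^N(\Phi_{\star}U,\Phi_{\star}V)\bigr)\circ\Phi^{-1}$; more cleanly, the defining identity $(\Phi^{\star}b^N)(U,V) = \Phi^{\star}(b^N(\Phi_{\star}U,\Phi_{\star}V))$ together with $\Phi^{\star}b^N = b^M$ gives $b^M(U,V) = b^N(\Phi_{\star}U,\Phi_{\star}V)\circ\Phi$. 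Setting $U = \nabla^{L}_{b^M}(\Phi^{\star}F)$ and $V = X$, this yields
\begin{equation*}
b^N\bigl(\Phi_{\star}(\nabla^{L}_{b^M}(\Phi^{\star}F)),\, \Phi_{\star}X\bigr)\circ\Phi = b^M\bigl(\nabla^{L}_{b^M}(\Phi^{\star}F),\, X\bigr) = \mathrm{d}(\Phi^{\star}F)\cdot X,
\end{equation*}
where the last equality is the definition of the left-gradient on $M$. The final ingredient is the chain rule for pullbacks, $\mathrm{d}(\Phi^{\star}F)\cdot X = \mathrm{d}(F\circ\Phi)\cdot X = (\mathrm{d}F\cdot \Phi_{\star}X)\circ\Phi$, which when composed with $\Phi^{-1}$ delivers exactly $\mathrm{d}F\cdot Y = \mathrm{d}F\cdot\Phi_{\star}X$, matching the required right-hand side. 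The right-gradient case is identical after swapping the two slots of $b$ in the defining relation.

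I expect the main obstacle to be purely bookkeeping rather than conceptual: keeping the push-forward $\Phi_{\star}$, the pull-back $\Phi^{\star}$, and the composition with $\Phi$ or $\Phi^{-1}$ straight, since the geometromorphism identity naturally produces functions composed with $\Phi$, and one must compose with $\Phi^{-1}$ at the right moment to land on a genuine identity of vector fields on $N$. A clean way to sidestep this is to prove the operator identity $\Phi_{\star}\circ \nabla^{L/R}_{b^M}\circ\Phi^{\star} = \nabla^{L/R}_{b^N}$ in the form \eqref{gradrel} by testing against $\Phi_{\star}X$ and using that $\Phi^{\star}$ commutes with $\mathrm{d}$ in the sense of the chain rule above; this organizes all the compositions into a single application of the geometromorphism condition and a single application of the chain rule, after which non-degeneracy of $b^N$ closes the argument.
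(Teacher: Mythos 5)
Your proposal is correct and follows essentially the same route as the paper's own proof: test against an arbitrary $Y=\Phi_{\star}X\in\mathfrak{X}(N)$, apply the geometromorphism identity $\Phi^{\star}b^N=b^M$ to transport the pairing to $M$, use the defining relation of $\nabla^{L/R}_{b^M}$ together with the chain rule $\mathrm{d}(\Phi^{\star}F)\cdot X=\Phi^{\star}(\mathrm{d}F\cdot\Phi_{\star}X)$, and close with the non-degeneracy of $b^N$. The only difference is cosmetic bookkeeping: you compose with $\Phi^{-1}$ where the paper applies $(\Phi^{\star})^{-1}$, which is the same operation.
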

\begin{proof} Let $F\in\mathcal{C}^{\infty}(N,\mathbb{R})$ be arbitrary fixed. We will prove first the identity concerning the left-gradient operator. In order to do this, let $Y\in\mathfrak{X}(N)$ be arbitrary fixed. Then, as $\Phi:M\rightarrow N$ is a geometromorphism (i.e., $\Phi^{\star}b^N =b^M$), we have
\begin{align*}
b^{N}(\Phi_{\star}(\nabla^{L}_{b^{M}}(\Phi^{\star}F)),Y)&=b^{N}(\Phi_{\star}(\nabla^{L}_{b^{M}}(\Phi^{\star}F)),\Phi_{\star}(\Phi^{-1}_{\star}Y))\\
&=(\Phi^{\star})^{-1}(\Phi^{\star}(b^{N}(\Phi_{\star}(\nabla^{L}_{b^{M}}(\Phi^{\star}F)),\Phi_{\star}(\Phi^{-1}_{\star}Y))))\\
&=(\Phi^{\star})^{-1}((\Phi^{\star}b^{N})(\nabla^{L}_{b^{M}}(\Phi^{\star}F),\Phi^{-1}_{\star}Y))\\
&=(\Phi^{\star})^{-1}(b^{M}(\nabla^{L}_{b^{M}}(\Phi^{\star}F),\Phi^{-1}_{\star}Y))\\
&=(\Phi^{\star})^{-1}(\mathrm{d}(\Phi^{\star}F)\cdot\Phi^{-1}_{\star}Y)=(\Phi^{\star})^{-1}(\Phi^{\star}(\mathrm{d}F\cdot Y))\\
&=\mathrm{d}F\cdot Y=b^{N}(\nabla^{L}_{b^N}F,Y).
\end{align*}
Thus, we obtained
\begin{align*}
b^{N}(\Phi_{\star}(\nabla^{L}_{b^{M}}(\Phi^{\star}F)),Y)=b^{N}(\nabla^{L}_{b^N}F,Y), ~\forall Y\in\mathfrak{X}(N),
\end{align*} 
and by the non-degeneracy of $b^{N}$, we get $\Phi_{\star}(\nabla^{L}_{b^{M}}(\Phi^{\star}F))=\nabla^{L}_{b^{N}}F$. Since this relation holds for any $F\in\mathcal{C}^{\infty}(N,\mathbb{R})$, we conclude that
$$
\Phi_{\star}\circ\nabla^{L}_{b^{M}}\circ\Phi^{\star}=\nabla^{L}_{b^{N}}.
$$

The identity for the right--gradient operator follows similarly. More exactly, for any $Y\in\mathfrak{X}(N)$, we have
\begin{align*}
b^{N}(Y,\Phi_{\star}(\nabla^{R}_{b^{M}}(\Phi^{\star}F)))&=b^{N}(\Phi_{\star}(\Phi^{-1}_{\star}Y),\Phi_{\star}(\nabla^{R}_{b^{M}}(\Phi^{\star}F)))\\
&=(\Phi^{\star})^{-1}(\Phi^{\star}(b^{N}(\Phi_{\star}(\Phi^{-1}_{\star}Y),\Phi_{\star}(\nabla^{R}_{b^{M}}(\Phi^{\star}F)))))\\
&=(\Phi^{\star})^{-1}((\Phi^{\star}b^{N})(\Phi^{-1}_{\star}Y,\nabla^{R}_{b^{M}}(\Phi^{\star}F)))\\
&=(\Phi^{\star})^{-1}(b^{M}(\Phi^{-1}_{\star}Y,\nabla^{R}_{b^{M}}(\Phi^{\star}F)))\\
&=(\Phi^{\star})^{-1}(\mathrm{d}(\Phi^{\star}F)\cdot\Phi^{-1}_{\star}Y)=(\Phi^{\star})^{-1}(\Phi^{\star}(\mathrm{d}F\cdot Y))\\
&=\mathrm{d}F\cdot Y=b^{N}(Y,\nabla^{R}_{b^N}F),
\end{align*}
and by the non-degeneracy of $b^{N}$, we obtain $\Phi_{\star}(\nabla^{R}_{b^{M}}(\Phi^{\star}F))=\nabla^{R}_{b^{N}}F$. As this relation holds for any $F\in\mathcal{C}^{\infty}(N,\mathbb{R})$, we get
$$
\Phi_{\star}\circ\nabla^{R}_{b^{M}}\circ\Phi^{\star}=\nabla^{R}_{b^{N}}.
$$
\end{proof}

A particular case of Theorem \ref{grdinvar} which worth mentioned, occurs when $(M,b^M)=(N,b^{N})$. More precisely, the following result holds.

\begin{corollary}\label{grdgeome}
Let $(M,b)$ be a manifold endowed with a geometric structure, and $\mathcal{G}(M,b)$ the associated group of geometromorphisms. Then
\begin{equation}\label{gradge}
\Phi_{\star}\circ\nabla^{L/R}_{b}=\nabla^{L/R}_{b}\circ(\Phi^{-1})^{\star}, ~\forall \Phi\in\mathcal{G}(M,b),
\end{equation}
or explicitly,
\begin{align*}
\Phi_{\star}(\nabla^{L/R}_{b}F)=\nabla^{L/R}_{b}((\Phi^{-1})^{\star}F), ~\forall F\in\mathcal{C}^{\infty}(M,\mathbb{R}),~ \forall \Phi\in\mathcal{G}(M,b).
\end{align*}
\end{corollary}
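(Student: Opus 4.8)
The plan is to derive this as an immediate specialization of Theorem \ref{grdinvar} to the case where the source and target geometric manifolds coincide. Given any $\Phi\in\mathcal{G}(M,b)$, it is by definition a geometromorphism from $(M,b)$ to itself, so setting $(M,b^M)=(N,b^N)=(M,b)$ in Theorem \ref{grdinvar} applies verbatim and yields $\Phi_{\star}\circ\nabla^{L/R}_{b}\circ\Phi^{\star}=\nabla^{L/R}_{b}$.

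The only additional ingredient needed is the invertibility of the pullback operator $\Phi^{\star}$ on $\mathcal{C}^{\infty}(M,\mathbb{R})$. Since $\Phi^{\star}F=F\circ\Phi$, the contravariance of the pullback gives $(\Phi^{-1})^{\star}\circ\Phi^{\star}=(\Phi\circ\Phi^{-1})^{\star}=(\operatorname{Id}_{M})^{\star}=\operatorname{Id}$, and symmetrically $\Phi^{\star}\circ(\Phi^{-1})^{\star}=\operatorname{Id}$; hence $(\Phi^{\star})^{-1}=(\Phi^{-1})^{\star}$.

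To finish, I would compose the identity coming from Theorem \ref{grdinvar} on the right with $(\Phi^{-1})^{\star}$. The left-hand side becomes $\Phi_{\star}\circ\nabla^{L/R}_{b}\circ\Phi^{\star}\circ(\Phi^{-1})^{\star}=\Phi_{\star}\circ\nabla^{L/R}_{b}$, while the right-hand side becomes $\nabla^{L/R}_{b}\circ(\Phi^{-1})^{\star}$, which is precisely the operator identity \eqref{gradge}. The explicit pointwise form then follows by evaluating both operators on an arbitrary $F\in\mathcal{C}^{\infty}(M,\mathbb{R})$.

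There is essentially no obstacle here: the statement is a purely formal rearrangement of Theorem \ref{grdinvar}, relying only on the elementary fact that pullback by a diffeomorphism is invertible, with inverse the pullback by the inverse diffeomorphism. The two cases $L$ and $R$ are handled simultaneously, since the argument never distinguishes between them.
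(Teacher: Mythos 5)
Your proof is correct and takes essentially the same route as the paper, which states this corollary without a separate proof, presenting it as the immediate particular case $(M,b^M)=(N,b^N)$ of Theorem \ref{grdinvar}. Your write-up simply makes explicit the (routine) step the paper leaves implicit: composing $\Phi_{\star}\circ\nabla^{L/R}_{b}\circ\Phi^{\star}=\nabla^{L/R}_{b}$ on the right with $(\Phi^{\star})^{-1}=(\Phi^{-1})^{\star}$.
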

Let us point out now that for any manifold with geometric structure, $(M,b)$, there are two natural actions of the group of geometromorphisms:
\begin{itemize}
\item[(i)] $\tau(\Phi)\bullet F:=(\Phi^{-1})^{\star}F,~ \forall F\in\mathcal{C}^{\infty}(M,\mathbb{R}), ~\forall \Phi\in\mathcal{G}(M,b),$
\item[(ii)] $\tilde{\tau}(\Phi)\bullet X:=\Phi_{\star}X,~ \forall X\in\mathfrak{X}(M), ~\forall \Phi\in\mathcal{G}(M,b),$
\end{itemize}
given by the restriction to $\mathcal{G}(M,b)$ of the corresponding actions of the group $\operatorname{Diff}(M)$. 
In terms of the above defined actions, relation \eqref{gradge} says actually that the operators $\nabla^{L}_{b}, \nabla^{R}_{b}$ are $\mathcal{G}(M,\mathbb{R})-$equivariant, namely 
\begin{equation}\label{grdequiv}
\tilde{\tau}(\Phi)\bullet \nabla^{L/R}_{b}F=\nabla^{L/R}_{b}(\tau(\Phi)\bullet F), ~\forall F\in\mathcal{C}^{\infty}(M,\mathbb{R}), ~\forall \Phi\in\mathcal{G}(M,b).
\end{equation}

\subsection{Brackets associated to a geometric structure}

In this subsection we introduce and study several brackets naturally associated to a geometric structure. Those brackets reveals another feature of geometric manifolds, proving that geometric manifolds are particular cases of Leibniz manifolds. Moreover, each geometric manifold can be given naturally three  Leibniz structures. The existence of these brackets is tight related to gradient--like vector fields generated by geometric structures. For details regarding Leibniz manifolds, see e.g., \cite{JP}.

Let us provide now a simple result which constitutes actually the starting point of the above mentioned constructions.
\begin{proposition}\label{fimpo}
Let $b$ be a geometric structure on a manifold $M$, and $U \subseteq M$ an open set. Then the following identity holds
\begin{equation}\label{supimp}
b(\nabla_{b}^{L}F,\nabla_{b}^{L}G)=b(\nabla_{b}^{R}F,\nabla_{b}^{R}G), ~\forall F,G \in\mathcal{C}^{\infty}(U,\mathbb{R}).
\end{equation}
\end{proposition}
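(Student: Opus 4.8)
The plan is to reduce both sides of \eqref{supimp} to one and the same scalar, namely the mixed quantity $b(\nabla^{L}_{b}G,\nabla^{R}_{b}F)$, using only the defining relations of the left- and right-gradients. No local coordinates are needed, although the matrix formulas \eqref{grdloc} furnish an immediate sanity check: a direct computation shows that both sides equal $(\mathrm{d}F)_{x}^{\top}\mathcal{B}_{x}^{-\top}(\mathrm{d}G)_{x}$, since $(\mathcal{B}_{x}^{-\top})^{\top}\mathcal{B}_{x}\,\mathcal{B}_{x}^{-\top}=(\mathcal{B}_{x}^{-1})^{\top}\mathcal{B}_{x}\,\mathcal{B}_{x}^{-1}=\mathcal{B}_{x}^{-\top}$.

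First I would handle the left-hand side. Applying the defining relation $b(\nabla^{L}_{b}F,X)=\mathrm{d}F\cdot X$ with the test field $X=\nabla^{L}_{b}G$ gives $b(\nabla^{L}_{b}F,\nabla^{L}_{b}G)=\mathrm{d}F\cdot\nabla^{L}_{b}G$. Then I would re-express the right-hand side of this equality through the defining relation of the right-gradient of $F$, namely $b(X,\nabla^{R}_{b}F)=\mathrm{d}F\cdot X$, evaluated at $X=\nabla^{L}_{b}G$; this yields $\mathrm{d}F\cdot\nabla^{L}_{b}G=b(\nabla^{L}_{b}G,\nabla^{R}_{b}F)$. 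Combining the two steps gives $b(\nabla^{L}_{b}F,\nabla^{L}_{b}G)=b(\nabla^{L}_{b}G,\nabla^{R}_{b}F)$.

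Next I would treat the right-hand side symmetrically. The relation $b(X,\nabla^{R}_{b}G)=\mathrm{d}G\cdot X$ with $X=\nabla^{R}_{b}F$ gives $b(\nabla^{R}_{b}F,\nabla^{R}_{b}G)=\mathrm{d}G\cdot\nabla^{R}_{b}F$, and the relation $b(\nabla^{L}_{b}G,X)=\mathrm{d}G\cdot X$ with $X=\nabla^{R}_{b}F$ rewrites this same quantity as $b(\nabla^{L}_{b}G,\nabla^{R}_{b}F)$. Hence $b(\nabla^{R}_{b}F,\nabla^{R}_{b}G)=b(\nabla^{L}_{b}G,\nabla^{R}_{b}F)$ as well.

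Comparing the two computations shows that both $b(\nabla^{L}_{b}F,\nabla^{L}_{b}G)$ and $b(\nabla^{R}_{b}F,\nabla^{R}_{b}G)$ coincide with the common value $b(\nabla^{L}_{b}G,\nabla^{R}_{b}F)$, which is precisely \eqref{supimp}. There is no genuine obstacle here: the only point requiring care is the bookkeeping of which gradient's defining identity is invoked with which test vector field, so that the mixed term emerges identically from both sides. I would also note that the argument uses no symmetry property of $b$ whatsoever, so the identity holds for an arbitrary geometric structure.
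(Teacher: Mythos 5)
Your proof is correct and is essentially the paper's own argument: the paper strings the same four applications of the defining relations into a single chain $b(\nabla_{b}^{L}F,\nabla_{b}^{L}G)=\mathrm{d}F\cdot\nabla_{b}^{L}G=b(\nabla_{b}^{L}G,\nabla_{b}^{R}F)=\mathrm{d}G\cdot\nabla_{b}^{R}F=b(\nabla_{b}^{R}F,\nabla_{b}^{R}G)$, whereas you simply split that chain into two halves meeting at the mixed term $b(\nabla_{b}^{L}G,\nabla_{b}^{R}F)$. The coordinate sanity check is a harmless extra and also checks out.
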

\begin{proof}
The relation \eqref{supimp} follows directly from the definition of left/right--gradient vector fields induced by the geometric structure $b$. Indeed, for any $F,G \in\mathcal{C}^{\infty}(U,\mathbb{R})$ we obtain
\begin{align*}
b(\nabla_{b}^{L}F,\nabla_{b}^{L}G)=\mathrm{d}F\cdot\nabla_{b}^{L}G=b(\nabla_{b}^{L}G,\nabla_{b}^{R}F)=\mathrm{d}G\cdot\nabla_{b}^{R}F=b(\nabla_{b}^{R}F,\nabla_{b}^{R}G).
\end{align*}
\end{proof}

The Proposition \ref{fimpo} implies the existence of a natural Leibniz bracket on each geometric manifold, $(M,b)$, which \textit{will be called the $b-$bracket}, thus proving that each geometric manifold is an example of Leibniz manifold. This will be formalized in the following result.

\begin{theorem}\label{leib2}
Let $b$ be a geometric structure on a manifold $M$. The mapping $\{\cdot,\cdot\}_b:\mathcal{C}^{\infty}(M,\mathbb{R})\times\mathcal{C}^{\infty}(M,\mathbb{R})\rightarrow \mathcal{C}^{\infty}(M,\mathbb{R})$, given by
$$
\{F,G\}_b:=b(\nabla_{b}^{L}F,\nabla_{b}^{L}G)=b(\nabla_{b}^{R}F,\nabla_{b}^{R}G), ~\forall F,G\in\mathcal{C}^{\infty}(M,\mathbb{R}),
$$
is a Leibniz bracket (i.e., an $\mathbb{R}-$bilinear mapping on the algebra $\mathcal{C}^{\infty}(M,\mathbb{R})$, which is a derivation on each entry), and consequently $(M,\{\cdot,\cdot\}_b)$ is a Leibniz manifold.
\end{theorem}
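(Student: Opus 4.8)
The plan is to take for granted the well-definedness of $\{\cdot,\cdot\}_b$ — that is, the equality of the two defining expressions $b(\nabla_b^L F,\nabla_b^L G)=b(\nabla_b^R F,\nabla_b^R G)$ — since this is precisely the content of Proposition \ref{fimpo}. What remains is to verify the two structural properties, namely $\mathbb{R}$--bilinearity and the derivation (Leibniz) property in each argument. The essential observation driving the whole argument is that the dual representation supplied by Proposition \ref{fimpo} lets me choose freely between the left-- and right--gradient forms of the bracket, and this freedom is exactly what makes the two entries symmetric to handle.

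First I would establish a product rule for the gradient operators: for all $G,H\in\mathcal{C}^{\infty}(M,\mathbb{R})$,
$$
\nabla_{b}^{L/R}(GH)=G\,\nabla_{b}^{L/R}H+H\,\nabla_{b}^{L/R}G.
$$
This follows at once from the Leibniz rule $\mathrm{d}(GH)=G\,\mathrm{d}H+H\,\mathrm{d}G$ together with the defining relation for the gradients and the $\mathcal{C}^{\infty}(M,\mathbb{R})$--bilinearity of $b$: for every $X\in\mathfrak{X}(M)$ one computes $b(\nabla_{b}^{L}(GH),X)=\mathrm{d}(GH)\cdot X=G(\mathrm{d}H\cdot X)+H(\mathrm{d}G\cdot X)=b(G\,\nabla_{b}^{L}H+H\,\nabla_{b}^{L}G,X)$, and the claim for $\nabla_{b}^{L}$ follows from the non--degeneracy of $b$; the argument for $\nabla_{b}^{R}$ is identical. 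An analogous computation shows that each gradient operator is $\mathbb{R}$--linear, since $\mathrm{d}$ is $\mathbb{R}$--linear.

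$\mathbb{R}$--bilinearity of $\{\cdot,\cdot\}_b$ is then immediate, since it is the composition of the $\mathbb{R}$--linear operators $\nabla_{b}^{L/R}$ with the $\mathcal{C}^{\infty}$--bilinear pairing $b$. For the derivation property in the second entry I would use the left--gradient form: applying the product rule and the $\mathcal{C}^{\infty}$--linearity of $b$ in its second slot gives
$$
\{F,GH\}_b=b\big(\nabla_{b}^{L}F,\,G\,\nabla_{b}^{L}H+H\,\nabla_{b}^{L}G\big)=G\,\{F,H\}_b+H\,\{F,G\}_b.
$$
For the derivation property in the first entry I would instead use the right--gradient form $\{GH,F\}_b=b(\nabla_{b}^{R}(GH),\nabla_{b}^{R}F)$, where the same product rule now acts on the first slot of $b$, yielding $\{GH,F\}_b=G\,\{H,F\}_b+H\,\{G,F\}_b$. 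This is where Proposition \ref{fimpo} earns its keep: without the right--gradient representation one would have to extract the product rule from inside the first argument of $b$ through the left--gradient form, which is less transparent.

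There is no serious obstacle here; the only point requiring a little care is the product rule for the gradients, whose proof rests entirely on the non--degeneracy of $b$ (to pass from an equality of $b(\,\cdot\,,X)$ for all $X$ to an equality of vector fields). Once bilinearity and the two derivation identities are in hand, $\{\cdot,\cdot\}_b$ is by definition a Leibniz bracket, and hence $(M,\{\cdot,\cdot\}_b)$ is a Leibniz manifold, completing the proof.
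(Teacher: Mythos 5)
Your proof is correct and takes essentially the same route as the paper's: both reduce the Leibniz property to the product rule for the gradient operators plus the $\mathcal{C}^{\infty}(M,\mathbb{R})$-bilinearity of the tensor field $b$, with $\mathbb{R}$-bilinearity inherited from the linearity of the gradients (your explicit non-degeneracy argument for the gradient product rule is a detail the paper leaves implicit). The only place your commentary overshoots is the claim that Proposition \ref{fimpo} is what makes the first-entry derivation work: since $b$ is a $(0,2)$-tensor field it is $\mathcal{C}^{\infty}(M,\mathbb{R})$-linear in \emph{both} slots, so the paper's own proof pulls $F$ and $G$ out of the first argument of $\{FG,H\}_{b}=b(\nabla_{b}^{L}(FG),\nabla_{b}^{L}H)$ directly; your switch to the right-gradient representation for the first entry is a harmless stylistic variant, not a logical necessity.
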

\begin{proof}
As the $\mathbb{R}-$bilinearity of $\{\cdot,\cdot\}_{b}$ is a direct consequence of the $\mathbb{R}-$linearity of the left/right--gradient operators, we will prove only the derivation property of $\{\cdot,\cdot\}_{b}$ on the first entry, the derivation property in the second entry following mimetically. Indeed, for any $F,G,H\in\mathcal{C}^{\infty}(M,\mathbb{R})$, we have
\begin{align*}
\{FG,H\}_{b}&=b(\nabla_{b}^{L}(FG),\nabla_{b}^{L}H)=b(F\nabla_{b}^{L} G+G \nabla_{b}^{L} F,\nabla_{b}^{L} H)\\
&=Fb(\nabla_{b}^{L} G,\nabla_{b}^{L} H)+Gb(\nabla_{b}^{L} F,\nabla_{b}^{L} H)\\
&=F\{G,H\}_{b}+G\{F,H\}_{b}.
\end{align*}
\end{proof}

\begin{example}\label{symplbr}
Recall from Remark \ref{REMI} that if $b$ is skew--symmetric and $\mathrm{d}b=0$ (i.e., $(M,b)$ is a symplectic manifold), then for any $H\in\mathcal{C}^{\infty}(M,\mathbb{R})$, the associated Hamiltonian vector field, $X_H$, coincides with $\nabla_{b}^{L}H=-\nabla_{b}^{R}H$. Thus, in this case, the $b-$bracket coincides with the Poisson bracket induced by the symplectic form $b$, i.e., $\{F,G\}_{b}=b(X_F ,X_G), ~\forall F,G\in\mathcal{C}^{\infty}(M,\mathbb{R})$. Recall that a Poisson bracket on the algebra $\mathcal{C}^{\infty}(M,\mathbb{R})$ is a a skew-symmetric Leibniz bracket which verifies the Jacobi identity. For details regarding Poisson brackets, see e.g., \cite{MR}, \cite{RTC}.
\end{example}

Let us now give a local representation of the $b-$bracket. More precisely, in terms of the local coordinates, $x=(x^1,\dots,x^n)$, where $n=\operatorname{dim}M$, the $b-$bracket becomes
\begin{equation*}
\{F,G\}_b (x)=(\mathrm{d}F)_{x}^{\top}\mathcal{B}_{x}^{-\top}(\mathrm{d}G)_{x},
\end{equation*}
or equivalently
\begin{equation*}
\{F,G\}_{b}=\sum_{1\leq i,j\leq n}\mathcal{B}^{ji}\dfrac{\partial F}{\partial{x^{i}}}\dfrac{\partial G}{\partial{x^{j}}},
\end{equation*}
where $(\mathcal{B}^{ij}(x))_{1\leq i,j\leq n}:=\mathcal{B}^{-1}_x$.

In the following result we show that the gradient--like vector fields generated by a geometric structure $b$, actually coincide with the Leibniz vector fields generated by the $b-$bracket. 
\begin{proposition}
Let $b$ be a geometric structure on a manifold $M$, $\{\cdot,\cdot\}_{b}$ the associated $b-$bracket, and $F\in\mathcal{C}^{\infty}(M,\mathbb{R})$. Then the left--Leibniz vector field generated by $F$ (i.e.,  $G\in\mathcal{C}^{\infty}(M,\mathbb{R})\mapsto \{G,F\}_{b}\in\mathcal{C}^{\infty}(M,\mathbb{R})$) coincides with $\nabla_{b}^{L}F$, while the right--Leibniz vector field generated by $F$ (i.e.,  $G\in\mathcal{C}^{\infty}(M,\mathbb{R})\mapsto \{F,G\}_{b}\in\mathcal{C}^{\infty}(M,\mathbb{R})$) coincides with $\nabla_{b}^{R}F$.
\end{proposition}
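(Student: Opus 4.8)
The plan is to reduce each assertion to a single pointwise identity and then invoke the standard fact that a vector field is completely determined by its action as a derivation on $\mathcal{C}^{\infty}(M,\mathbb{R})$. By Theorem \ref{leib2} the maps $G\mapsto\{G,F\}_b$ and $G\mapsto\{F,G\}_b$ are derivations of the function algebra, hence correspond to unique vector fields; to identify these with $\nabla_b^L F$ and $\nabla_b^R F$ respectively, it suffices to establish
\begin{equation*}
\{G,F\}_b=\mathrm{d}G\cdot\nabla_b^L F,\qquad \{F,G\}_b=\mathrm{d}G\cdot\nabla_b^R F,\qquad \forall G\in\mathcal{C}^{\infty}(M,\mathbb{R}),
\end{equation*}
since the right-hand sides are exactly the derivations induced by $\nabla_b^L F$ and $\nabla_b^R F$.

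For the left case I would start from the \emph{left} form of the $b$-bracket, $\{G,F\}_b=b(\nabla_b^L G,\nabla_b^L F)$, and then apply the defining relation of the left-gradient of $G$, namely $b(\nabla_b^L G,X)=\mathrm{d}G\cdot X$ for every $X\in\mathfrak{X}(M)$, at the particular choice $X=\nabla_b^L F$. This collapses $b(\nabla_b^L G,\nabla_b^L F)$ to $\mathrm{d}G\cdot\nabla_b^L F$, which is the action of the vector field $\nabla_b^L F$ on $G$; hence the left-Leibniz vector field generated by $F$ is $\nabla_b^L F$.

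For the right case I would proceed symmetrically, starting from the \emph{right} form $\{F,G\}_b=b(\nabla_b^R F,\nabla_b^R G)$ and applying the defining relation of the right-gradient of $G$, $b(X,\nabla_b^R G)=\mathrm{d}G\cdot X$, at $X=\nabla_b^R F$. This gives $\{F,G\}_b=\mathrm{d}G\cdot\nabla_b^R F$, identifying the right-Leibniz vector field generated by $F$ with $\nabla_b^R F$.

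There is no genuine obstacle in this argument; the one point requiring care is selecting the correct representation of the bracket in each case. The two representations $b(\nabla_b^L\cdot,\nabla_b^L\cdot)$ and $b(\nabla_b^R\cdot,\nabla_b^R\cdot)$ agree as scalars by Proposition \ref{fimpo}, but they feed into the gradient definitions in genuinely different ways. The directional-derivative collapse works precisely because the gradient of $G$ sits in the slot dictated by its own defining relation — the argument $G$ occupies the \emph{first} slot in $\{G,F\}_b$ (matching the first-slot convention of $\nabla^L_b$) and the \emph{second} slot in $\{F,G\}_b$ (matching the second-slot convention of $\nabla^R_b$). Verifying this entry-matching is the single thing I would double-check.
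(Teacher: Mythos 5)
Your proposal is correct and follows essentially the same route as the paper: in each case you select the matching representation of the bracket ($b(\nabla_b^L\cdot,\nabla_b^L\cdot)$ for the left--Leibniz field, $b(\nabla_b^R\cdot,\nabla_b^R\cdot)$ for the right one) and collapse it to $\mathrm{d}G\cdot\nabla_b^{L/R}F$ via the defining relation of the gradient of $G$, exactly as the paper does. The entry--matching point you flag at the end is indeed the only subtlety, and you have it right.
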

\begin{proof} Let us fix $F\in\mathcal{C}^{\infty}(M,\mathbb{R})$. Then the left--Leibniz vector field,
\begin{align*}
G\in\mathcal{C}^{\infty}(M,\mathbb{R})\mapsto \{G,F\}_{b}=b(\nabla_{b}^{L} G,\nabla_{b}^{L} F)\in\mathcal{C}^{\infty}(M,\mathbb{R}),
\end{align*}
can be equivalently written as
\begin{align*}
G\in\mathcal{C}^{\infty}(M,\mathbb{R})\mapsto \mathrm{d}G\cdot\nabla_{b}^{L} F=(\nabla_{b}^{L} F) (G)\in\mathcal{C}^{\infty}(M,\mathbb{R}),
\end{align*}
hence, it coincides with $\nabla_{b}^{L} F$.

Similarly, the right--Leibniz vector field,
\begin{align*}
G\in\mathcal{C}^{\infty}(M,\mathbb{R})\mapsto \{F,G\}_{b}=b(\nabla_{b}^{R} F,\nabla_{b}^{R} G)\in\mathcal{C}^{\infty}(M,\mathbb{R}),
\end{align*}
can be equivalently written as
\begin{align*}
G\in\mathcal{C}^{\infty}(M,\mathbb{R})\mapsto \mathrm{d}G\cdot\nabla_{b}^{R} F=(\nabla_{b}^{R} F) (G)\in\mathcal{C}^{\infty}(M,\mathbb{R}),
\end{align*}
and consequently, it is exactly $\nabla_{b}^{R} F$.
\end{proof}

Let us now present another compatibility between geometric structures and the associated brackets, this time in terms of geometromorphisms. More exactly, we shall prove that geometromorphisms preserve $b-$brackets.
\begin{theorem}\label{yuo}
Let $(M,b^M)$, $(N,b^{N})$ be two manifolds endowed with geometric structures, and let $\Phi:M\rightarrow N$ be a geometromorphism. Then
\begin{equation*}
\Phi^{\star}\{F,G\}_{b^{N}}=\{\Phi^{\star}F,\Phi^{\star}G\}_{b^M}, ~\forall F,G\in\mathcal{C}^{\infty}(N,\mathbb{R}),
\end{equation*}
where $\{\cdot,\cdot\}_{b^M},\{\cdot,\cdot\}_{b^N}$ denote the $b-$brackets associated to the geometric structures $b^M$ and $b^N$, respectively.
\end{theorem}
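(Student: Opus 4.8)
The plan is to reduce both sides to the definition of the $b$-bracket in terms of gradient-like vector fields, and then transport the computation from $M$ to $N$ using the two facts already available: the defining identity of a geometromorphism, $\Phi^{\star}b^{N}=b^{M}$, and the gradient invariance established in Theorem \ref{grdinvar}. Concretely, I would fix $F,G\in\mathcal{C}^{\infty}(N,\mathbb{R})$ and begin from the right-hand side, writing the bracket through its left-gradient presentation,
$$
\{\Phi^{\star}F,\Phi^{\star}G\}_{b^M}=b^{M}(\nabla^{L}_{b^{M}}(\Phi^{\star}F),\nabla^{L}_{b^{M}}(\Phi^{\star}G)).
$$
(The right-gradient presentation, guaranteed to coincide by Theorem \ref{leib2}, would lead to an entirely parallel computation.)

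The central step is to eliminate $b^{M}$ in favour of $b^{N}$: since $\Phi^{\star}b^{N}=b^{M}$, for all $X,Y\in\mathfrak{X}(M)$ we have $b^{M}(X,Y)=\Phi^{\star}(b^{N}(\Phi_{\star}X,\Phi_{\star}Y))$. Applying this with $X=\nabla^{L}_{b^{M}}(\Phi^{\star}F)$ and $Y=\nabla^{L}_{b^{M}}(\Phi^{\star}G)$ turns the previous line into
$$
\{\Phi^{\star}F,\Phi^{\star}G\}_{b^M}=\Phi^{\star}\big(b^{N}(\Phi_{\star}(\nabla^{L}_{b^{M}}(\Phi^{\star}F)),\Phi_{\star}(\nabla^{L}_{b^{M}}(\Phi^{\star}G)))\big).
$$
Now I would invoke Theorem \ref{grdinvar} in its explicit form $\Phi_{\star}(\nabla^{L}_{b^{M}}(\Phi^{\star}F))=\nabla^{L}_{b^{N}}F$, which collapses both inner arguments to the corresponding left-gradients on $N$, giving
$$
\{\Phi^{\star}F,\Phi^{\star}G\}_{b^M}=\Phi^{\star}\big(b^{N}(\nabla^{L}_{b^{N}}F,\nabla^{L}_{b^{N}}G)\big)=\Phi^{\star}\{F,G\}_{b^N},
$$
which is exactly the claimed identity.

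I do not anticipate any genuine obstacle: the argument is a direct chain of substitutions combining the definition of the bracket with the already-proven gradient invariance. The only point requiring a little care is the bookkeeping in the middle display, namely inserting $\Phi_{\star}$ and $\Phi^{\star}$ in the correct places so that the geometromorphism identity $\Phi^{\star}b^{N}=b^{M}$ applies verbatim; this is purely formal. Running the same derivation through the right-gradient presentation of the bracket provides a consistency check that the resulting identity is independent of the chosen presentation.
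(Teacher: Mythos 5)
Your proposal is correct and follows essentially the same route as the paper: both expand $\{\Phi^{\star}F,\Phi^{\star}G\}_{b^M}$ via the gradient presentation of the bracket, invoke Theorem \ref{grdinvar} to identify the transported gradients, and use the geometromorphism identity to pass from $b^M$ to $b^N$ (the paper merely orders the two substitutions differently, rewriting the gradients as $\Phi^{-1}_{\star}(\nabla^{L/R}_{b^{N}}F)$ first and then applying $b^N=(\Phi^{-1})^{\star}b^M$, whereas you apply $\Phi^{\star}b^{N}=b^{M}$ first and then collapse via the forward form of the gradient invariance). The only cosmetic difference is that the paper carries the $L/R$ cases simultaneously, while you treat the left-gradient presentation and note the right-gradient case is parallel, which is equally valid since both presentations define the same bracket.
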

\begin{proof} As $\Phi:M\rightarrow N$ is a geometromorphism, and $(\Phi^{-1})^{\star}=(\Phi^{\star})^{-1}$, it follows that $b^N=(\Phi^{-1})^{\star}b^M$. Using this observation and the Theorem \ref{grdinvar}, we obtain for any $F,G\in\mathcal{C}^{\infty}(N,\mathbb{R})$
\begin{align*}
\{\Phi^{\star}F,\Phi^{\star}G\}_{b^M}&=b^{M}(\nabla^{L/R}_{b^M}(\Phi^{\star}F),\nabla^{L/R}_{b^M}(\Phi^{\star}G))=b^{M}(\Phi^{-1}_{\star}(\nabla^{L/R}_{b^{N}}F),\Phi^{-1}_{\star}(\nabla^{L/R}_{b^{N}}G))\\
&=\Phi^{\star}((\Phi^{\star})^{-1}(b^{M}(\Phi^{-1}_{\star}(\nabla^{L/R}_{b^{N}}F),\Phi^{-1}_{\star}(\nabla^{L/R}_{b^{N}}G))))\\
&=\Phi^{\star}(((\Phi^{-1})^{\star}b^{M})(\nabla^{L/R}_{b^{N}}F,\nabla^{L/R}_{b^{N}}G))=\Phi^{\star}(b^{N}(\nabla^{L/R}_{b^{N}}F,\nabla^{L/R}_{b^{N}}G))\\
&=\Phi^{\star}\{F,G\}_{b^N},
\end{align*}
and thus we get the conclusion.
\end{proof}

A particular case of Theorem \ref{yuo} which worth mentioned, occurs when $(M,b^M)=(N,b^{N})$. More precisely, the following result holds.
\begin{corollary}\label{corimp}
Let $(M,b)$ be a manifold endowed with a geometric structure, and $\mathcal{G}(M,b)$ the associated group of geometromorphisms. Then the following relation holds true
\begin{equation*}
\Phi^{\star}\{F,G\}_{b}=\{\Phi^{\star}F,\Phi^{\star}G\}_b, \forall F,G\in\mathcal{C}^{\infty}(M,\mathbb{R}), ~\forall \Phi\in\mathcal{G}(M,b),
\end{equation*}
or equivalently, in terms of group actions
\begin{align*}
\tau(\Phi)\bullet\{F,G\}_b =\{\tau(\Phi)\bullet F,\tau(\Phi)\bullet G\}_b,~ \forall F,G\in\mathcal{C}^{\infty}(M,\mathbb{R}), ~\forall \Phi\in\mathcal{G}(M,b),
\end{align*}
where $\tau(\Psi)\bullet H:=H\circ\Psi^{-1},~\forall H\in\mathcal{C}^{\infty}(M,\mathbb{R}), ~\forall \Psi\in\mathcal{G}(M,b)$.
\end{corollary}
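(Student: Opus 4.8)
The plan is to obtain both assertions as immediate specializations of Theorem \ref{yuo}. For the first displayed identity I would apply that theorem to the case $(N,b^N)=(M,b^M)=:(M,b)$. By definition, every $\Phi\in\mathcal{G}(M,b)$ is a geometromorphism of $(M,b)$ onto itself, so Theorem \ref{yuo} applies with $b^M=b^N=b$ and delivers
$$
\Phi^{\star}\{F,G\}_{b}=\{\Phi^{\star}F,\Phi^{\star}G\}_b, ~\forall F,G\in\mathcal{C}^{\infty}(M,\mathbb{R}),~\forall\Phi\in\mathcal{G}(M,b),
$$
with no additional computation required.

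To pass to the equivalent group-action formulation, I would first unwind the definition of the action, namely $\tau(\Phi)\bullet H=H\circ\Phi^{-1}=(\Phi^{-1})^{\star}H$. The only subtlety is that $\tau$ is defined via pullback along $\Phi^{-1}$ rather than along $\Phi$. Since $\mathcal{G}(M,b)$ is a group (by the theorem on the group of geometromorphisms), the inverse $\Phi^{-1}$ again belongs to $\mathcal{G}(M,b)$; hence I may apply the first identity with $\Phi^{-1}$ in place of $\Phi$, obtaining
$$
(\Phi^{-1})^{\star}\{F,G\}_{b}=\{(\Phi^{-1})^{\star}F,(\Phi^{-1})^{\star}G\}_b.
$$
Rewriting each occurrence of $(\Phi^{-1})^{\star}$ as $\tau(\Phi)\bullet$ yields precisely
$$
\tau(\Phi)\bullet\{F,G\}_b =\{\tau(\Phi)\bullet F,\tau(\Phi)\bullet G\}_b, ~\forall F,G\in\mathcal{C}^{\infty}(M,\mathbb{R}),~\forall\Phi\in\mathcal{G}(M,b).
$$

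I expect no genuine obstacle here: the entire content is already carried by Theorem \ref{yuo}, and the corollary is merely its diagonal restriction $(M,b^M)=(N,b^N)$. The sole point requiring attention is the bookkeeping of inverses when translating between the pullback form $\Phi^{\star}$ and the covariant action $\tau(\Phi)$; once one records that $\tau(\Phi)\bullet=(\Phi^{-1})^{\star}$ and that $\Phi\mapsto\Phi^{-1}$ is a bijection of $\mathcal{G}(M,b)$, the two formulations are manifestly equivalent.
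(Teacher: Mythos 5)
Your proposal is correct and matches the paper's intent exactly: the paper presents this corollary as the diagonal specialization $(M,b^M)=(N,b^N)$ of Theorem \ref{yuo}, with no further argument given. Your additional care in translating to the group-action form --- using that $\Phi^{-1}\in\mathcal{G}(M,b)$ by the group property and applying the first identity to $\Phi^{-1}$ --- correctly fills in the bookkeeping the paper leaves implicit.
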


Next, we prove that the $b-$bracket on a geometric manifold $(M,b)$, induces two more natural brackets on the manifold $M$, namely the symmetric and the skew--symmetric part of the $b-$bracket, which will be in their turn Leibniz brackets on $M$.

Let us start with the definition of the symmetric bracket associated to the $b-$bracket $\{\cdot,\cdot\}_b$.

\begin{definition}\label{simbr}
Let $b$ be a geometric structure on a manifold $M$, and $\{\cdot,\cdot\}_{b}$ the associated $b-$bracket. Then the application $\{\cdot,\cdot\}_{b,sym}:\mathcal{C}^{\infty}(M,\mathbb{R})\times\mathcal{C}^{\infty}(M,\mathbb{R})\rightarrow \mathcal{C}^{\infty}(M,\mathbb{R})$ given by
$$
\{F,G\}_{b,sym}:=\dfrac{1}{2}\left( \{F,G\}_{b} + \{G,F\}_{b}\right), ~\forall F,G\in\mathcal{C}^{\infty}(M,\mathbb{R}),
$$
is called the \textbf{symmetric $b-$bracket} associated to the geometric structure $b$.
\end{definition}

\begin{remark}
If $b$ is a symmetric geometric structure (i.e., $(M,b)$ is a Riemannian or semi--Riemannian manifold) then $\{\cdot,\cdot\}_{b,sym}=\{\cdot,\cdot\}_{b}$, whereas if $b$ is skew-symmetric (i.e., $(M,b)$ is an almost symplectic manifold) then $\{\cdot,\cdot\}_{b,sym}\equiv 0$.
\end{remark}

In the following result we show that the symmetric $b-$bracket is itself a Leibniz bracket on $M$. Moreover, the associated left and right Leibniz vector fields coincide, and equals the geometric mean of the left and right gradient vector fields generated by the geometric structure $b$.

\begin{proposition}
Let $b$ be a geometric structure on a manifold $M$, and $\{\cdot,\cdot\}_{b,sym}$ the associated symmetric $b-$bracket. Then the following assertions hold:
\begin{itemize}
\item[(i)] $\{\cdot,\cdot\}_{b,sym}$ is a symmetric Leibniz bracket, and hence $(M,\{\cdot,\cdot\}_{b,sym})$ is a symmetric Leibniz manifold,
\item[(ii)] for any arbitrary fixed $F\in\mathcal{C}^{\infty}(M,\mathbb{R})$, the associated left and right Leibniz vector fields coincide, and are both equal to $\dfrac{1}{2}\left(\nabla_{b}^{L}F+\nabla_{b}^{R}F\right)$.
\end{itemize}
\end{proposition}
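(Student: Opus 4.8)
The plan is to derive both assertions directly from the fact, established in Theorem \ref{leib2}, that $\{\cdot,\cdot\}_b$ is itself a Leibniz bracket, together with the identification of the left/right--Leibniz vector fields of $\{\cdot,\cdot\}_b$ with $\nabla_b^L F$ and $\nabla_b^R F$ proved in the preceding proposition. No genuinely new computation is required; the work consists of checking that symmetrization preserves the relevant structure and of keeping the left/right conventions straight.

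For (i), the symmetry $\{F,G\}_{b,sym}=\{G,F\}_{b,sym}$ is immediate from Definition \ref{simbr}, since interchanging $F$ and $G$ merely swaps the two summands. The $\mathbb{R}$--bilinearity is inherited from that of $\{\cdot,\cdot\}_b$. For the derivation property it suffices to treat the first entry (the second then follows from symmetry): for $F,G,H\in\mathcal{C}^\infty(M,\mathbb{R})$ I would expand
$$
\{FG,H\}_{b,sym}=\frac{1}{2}\left(\{FG,H\}_b+\{H,FG\}_b\right)
$$
and apply the derivation property of $\{\cdot,\cdot\}_b$ in the entry containing $FG$ to each summand. Collecting the terms proportional to $F$ and to $G$ reassembles exactly $F\{G,H\}_{b,sym}+G\{F,H\}_{b,sym}$, which is the desired Leibniz identity. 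Hence $\{\cdot,\cdot\}_{b,sym}$ is a symmetric Leibniz bracket and $(M,\{\cdot,\cdot\}_{b,sym})$ is a symmetric Leibniz manifold.

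For (ii), I would first observe that the symmetry established in (i) forces the left--Leibniz vector field $G\mapsto\{G,F\}_{b,sym}$ and the right--Leibniz vector field $G\mapsto\{F,G\}_{b,sym}$ to coincide. To identify this common vector field, recall from the preceding proposition that $\{G,F\}_b=(\nabla_b^L F)(G)$ and $\{F,G\}_b=(\nabla_b^R F)(G)$ for every $G\in\mathcal{C}^\infty(M,\mathbb{R})$. Consequently
$$
\{G,F\}_{b,sym}=\frac{1}{2}\left((\nabla_b^L F)(G)+(\nabla_b^R F)(G)\right)=\left(\frac{1}{2}\left(\nabla_b^L F+\nabla_b^R F\right)\right)(G),
$$
which shows that the associated Leibniz vector field is exactly $\frac{1}{2}(\nabla_b^L F+\nabla_b^R F)$.

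The only point demanding attention --- and the closest thing to an obstacle --- is the bookkeeping of which argument of $\{\cdot,\cdot\}_b$ corresponds to $\nabla_b^L$ versus $\nabla_b^R$; once the conventions of the preceding proposition are pinned down, both parts are purely formal consequences of the Leibniz property and of the linearity of symmetrization.
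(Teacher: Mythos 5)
Your proof is correct and takes essentially the same approach as the paper: (i) follows from the Leibniz property of $\{\cdot,\cdot\}_b$, and (ii) from the symmetry of $\{\cdot,\cdot\}_{b,sym}$ together with $\{G,F\}_{b,sym}=\frac{1}{2}\left((\nabla_b^L F)(G)+(\nabla_b^R F)(G)\right)$. The only cosmetic difference is that you cite the preceding proposition for the identities $\{G,F\}_b=(\nabla_b^L F)(G)$ and $\{F,G\}_b=(\nabla_b^R F)(G)$, whereas the paper re-derives them inline via Proposition \ref{fimpo} and the definitions of the left/right--gradients.
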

\begin{proof} 
\begin{itemize}
\item[(i)] This is a direct consequence of the fact $\{\cdot,\cdot\}_{b}$ is a Leibniz bracket on $M$.
\item[(ii)] Let $F\in\mathcal{C}^{\infty}(M,\mathbb{R})$. By the symmetry of the bracket $\{\cdot,\cdot\}_{b,sym}$ we get that the left and right Leibniz vector fields associated to $F$ coincide and are given by
$$
G\in\mathcal{C}^{\infty}(M,\mathbb{R})\mapsto \{G,F\}_{b,sym}=\dfrac{1}{2}(b(\nabla_{b}^{L} G,\nabla_{b}^{L} F)+b(\nabla_{b}^{L} F,\nabla_{b}^{L} G))\in\mathcal{C}^{\infty}(M,\mathbb{R}).
$$
Using the Proposition \ref{fimpo}, the above relation equals to
$$
G\in\mathcal{C}^{\infty}(M,\mathbb{R})\mapsto \{G,F\}_{b,sym}=\dfrac{1}{2}(b(\nabla_{b}^{L} G,\nabla_{b}^{L} F)+b(\nabla_{b}^{R} F,\nabla_{b}^{R} G))\in\mathcal{C}^{\infty}(M,\mathbb{R}),
$$
which becomes
$$
G\in\mathcal{C}^{\infty}(M,\mathbb{R})\mapsto \{G,F\}_{b,sym}=\dfrac{1}{2}(\mathrm{d}G\cdot\nabla_{b}^{L}F+\mathrm{d}G\cdot\nabla_{b}^{R}F)\in\mathcal{C}^{\infty}(M,\mathbb{R}),
$$
or equivalently
$$
G\in\mathcal{C}^{\infty}(M,\mathbb{R})\mapsto \{G,F\}_{b,sym}=\dfrac{1}{2}((\nabla_{b}^{L}F)(G)+(\nabla_{b}^{R}F)(G))\in\mathcal{C}^{\infty}(M,\mathbb{R}),
$$
and hence we get the conclusion.
\end{itemize}
\end{proof}

Note that both Theorem \ref{yuo} as well as Corollary \ref{corimp} remain true if we replace the $b-$brackets by their symmetric parts.
\medskip

Let us now give the definition of the skew--symmetric bracket associated to the $b-$bracket $\{\cdot,\cdot\}_b$.

\begin{definition}
Let $b$ be a geometric structure on a manifold $M$, and $\{\cdot,\cdot\}_{b}$ the associated $b-$bracket. Then the application $\{\cdot,\cdot\}_{b,skew}:\mathcal{C}^{\infty}(M,\mathbb{R})\times\mathcal{C}^{\infty}(M,\mathbb{R})\rightarrow \mathcal{C}^{\infty}(M,\mathbb{R})$ given by
$$
\{F,G\}_{b,skew}:=\dfrac{1}{2}\left( \{F,G\}_{b} - \{G,F\}_{b}\right), ~\forall F,G\in\mathcal{C}^{\infty}(M,\mathbb{R}),
$$
is called the \textbf{skew-symmetric $b-$bracket} associated to the geometric structure $b$.
\end{definition}

\begin{remark}
If $b$ is a skew-symmetric geometric structure (i.e., $(M,b)$ is an almost symplectic manifold) then $\{\cdot,\cdot\}_{b,skew}=\{\cdot,\cdot\}_{b}$, whereas if $b$ is symmetric (i.e., $(M,b)$ is a Riemannian or semi--Riemannian manifold) then $\{\cdot,\cdot\}_{b,skew}\equiv 0$.
\end{remark}

Next, we show that the skew--symmetric $b-$bracket is an almost Poisson bracket on $M$. Moreover, the associated Hamilton--Poisson vector field equals half the difference between the left and the right gradient vector fields generated by the geometric structure $b$.

\begin{proposition}
Let $b$ be a geometric structure on a manifold  $M$, and $\{\cdot,\cdot\}_{b,skew}$ the associated skew-symmetric $b-$bracket. Then
\begin{itemize}
\item[(i)] $\{\cdot,\cdot\}_{b,skew}$ is an almost Poisson bracket (i.e., a skew-symmetric Leibniz bracket), and hence $(M,\{\cdot,\cdot\}_{b,skew})$ is an almost Poisson manifold,
\item[(ii)] for any arbitrary fixed $F\in\mathcal{C}^{\infty}(M,\mathbb{R})$, the associated Hamilton--Poisson vector field, $X_F:=\{\cdot,F\}_{b,skew}$, is given by $\dfrac{1}{2}\left(\nabla_{b}^{L}F-\nabla_{b}^{R}F\right)$.
\end{itemize}
\end{proposition}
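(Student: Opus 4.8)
The plan is to mirror the argument used for the symmetric $b$-bracket, since the two statements are structurally identical. For part (i), I would first observe that skew-symmetry of $\{\cdot,\cdot\}_{b,skew}$ is immediate from its very definition, as swapping $F$ and $G$ changes the sign of $\frac{1}{2}(\{F,G\}_b - \{G,F\}_b)$. The only substantive point is the Leibniz (derivation) property, and this follows at once from Theorem \ref{leib2}: since $\{\cdot,\cdot\}_b$ is a derivation in each entry, both $(F,G)\mapsto\{F,G\}_b$ and its transpose $(F,G)\mapsto\{G,F\}_b$ are derivations in each entry, and any $\mathbb{R}$-linear combination of derivations on the algebra $\mathcal{C}^{\infty}(M,\mathbb{R})$ is again a derivation. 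Hence $\{\cdot,\cdot\}_{b,skew}$ is a skew-symmetric Leibniz bracket, i.e. an almost Poisson bracket, and $(M,\{\cdot,\cdot\}_{b,skew})$ is an almost Poisson manifold.

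For part (ii), I would fix $F$ and compute the Hamilton--Poisson vector field $G\mapsto\{G,F\}_{b,skew}$ directly. Expanding the definition gives
$$
\{G,F\}_{b,skew}=\dfrac{1}{2}\left(\{G,F\}_{b}-\{F,G\}_{b}\right).
$$
The key step---exactly as in the symmetric case---is to evaluate the two terms through the two different but equal expressions for the $b$-bracket furnished by Proposition \ref{fimpo}: I would write the first term via the left-gradient, $\{G,F\}_{b}=b(\nabla_{b}^{L}G,\nabla_{b}^{L}F)$, and the second via the right-gradient, $\{F,G\}_{b}=b(\nabla_{b}^{R}F,\nabla_{b}^{R}G)$. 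Applying the defining relations of the left/right-gradients, namely $b(\nabla_{b}^{L}G,\nabla_{b}^{L}F)=\mathrm{d}G\cdot\nabla_{b}^{L}F=(\nabla_{b}^{L}F)(G)$ and $b(\nabla_{b}^{R}F,\nabla_{b}^{R}G)=\mathrm{d}G\cdot\nabla_{b}^{R}F=(\nabla_{b}^{R}F)(G)$, collapses both terms into directional derivatives of $G$.

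Substituting back yields
$$
\{G,F\}_{b,skew}=\dfrac{1}{2}\left((\nabla_{b}^{L}F)(G)-(\nabla_{b}^{R}F)(G)\right)=\left(\dfrac{1}{2}\left(\nabla_{b}^{L}F-\nabla_{b}^{R}F\right)\right)(G),
$$
for every $G\in\mathcal{C}^{\infty}(M,\mathbb{R})$, so the derivation $G\mapsto\{G,F\}_{b,skew}$ is exactly the vector field $\frac{1}{2}(\nabla_{b}^{L}F-\nabla_{b}^{R}F)$, as claimed. I do not expect any genuine obstacle here; the only thing to be careful about is the bookkeeping of which gradient (left or right) is paired with which argument, which is precisely where Proposition \ref{fimpo} is invoked to rewrite one of the two bracket terms so that both reduce to a directional derivative acting on $G$.
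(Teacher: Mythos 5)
Your proposal is correct and follows essentially the same route as the paper: part (i) as a direct consequence of the Leibniz property of $\{\cdot,\cdot\}_{b}$, and part (ii) by expanding $\{G,F\}_{b,skew}$, using Proposition \ref{fimpo} to rewrite one of the two bracket terms, and then collapsing both terms into directional derivatives $(\nabla_{b}^{L}F)(G)$ and $(\nabla_{b}^{R}F)(G)$ via the defining relations of the left/right--gradients. The only cosmetic difference is that the paper first writes both terms with left--gradients and then converts the second one, whereas you invoke Proposition \ref{fimpo} to choose the right--gradient representation from the start; the computation is identical.
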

\begin{proof} 
\begin{itemize}
\item[(i)] The proof of this item is a direct consequence of the fact that $\{\cdot,\cdot\}_{b}$ is a Leibniz bracket on $M$.
\item[(ii)] Let $F\in\mathcal{C}^{\infty}(M,\mathbb{R})$. From the skew-symmetry of the bracket $\{\cdot,\cdot\}_{b,skew}$ we get that the associated left and right Leibniz vector fields of $F$ are opposite to each other. The Hamilton--Poisson vector field (which is by definition the left--Leibniz vector field) associated to $F$ is given by
$$
G\in\mathcal{C}^{\infty}(M,\mathbb{R})\mapsto \{G,F\}_{b,skew}=\dfrac{1}{2}(b(\nabla_{b}^{L} G,\nabla_{b}^{L} F)-b(\nabla_{b}^{L} F,\nabla_{b}^{L} G))\in\mathcal{C}^{\infty}(M,\mathbb{R}).
$$
Using the Proposition \ref{fimpo}, the above relation equals to
$$
G\in\mathcal{C}^{\infty}(M,\mathbb{R})\mapsto \{G,F\}_{b,skew}=\dfrac{1}{2}(b(\nabla_{b}^{L} G,\nabla_{b}^{L} F)-b(\nabla_{b}^{R} F,\nabla_{b}^{R} G))\in\mathcal{C}^{\infty}(M,\mathbb{R}),
$$
which becomes
$$
G\in\mathcal{C}^{\infty}(M,\mathbb{R})\mapsto \{G,F\}_{b,skew}=\dfrac{1}{2}(\mathrm{d}G\cdot\nabla_{b}^{L}F-\mathrm{d}G\cdot\nabla_{b}^{R}F)\in\mathcal{C}^{\infty}(M,\mathbb{R}),
$$
or equivalently
$$
G\in\mathcal{C}^{\infty}(M,\mathbb{R})\mapsto \{G,F\}_{b,skew}=\dfrac{1}{2}((\nabla_{b}^{L}F)(G)-(\nabla_{b}^{R}F)(G))\in\mathcal{C}^{\infty}(M,\mathbb{R}),
$$
and hence we get the conclusion.
\end{itemize}
\end{proof}

\begin{remark}
In the case when $b$ is a \textbf{constant} geometric structure on $\mathbb{R}^n$ (i.e., for any pair of \textbf{linear} functions $F,G\in\mathcal{C}^{\infty}(\mathbb{R}^{n},\mathbb{R})$, their $b-$bracket, $\{F,G\}_{b}$, is a \textbf{constant} function on $\mathbb{R}^{n}$) then the associated skew-symmetric $b-$bracket is also constant, and hence a Poisson bracket (i.e., a skew-symmetric Leibniz bracket  which verifies Jacobi identity). Consequently we get that $(\mathbb{R}^{n},\{\cdot,\cdot\}_{b,skew})$ is actually a Poisson manifold.
\end{remark}

We end this section by pointing out that, both Theorem \ref{yuo} as well as Corollary \ref{corimp} remain true if we replace the $b-$brackets by their skew--symmetric parts.
\medskip

\section{Laplace--like operators generated by geometric structures}

The aim of this section is to introduce and study Laplace--like operators naturally associated to a geometric structure. For doing this, we shall need besides the geometric structure, the manifold be endowed with a volume form, in order to have a well behaving divergence operator. Hence, the natural environment for our purpose will be a volume manifold $(M,\mu)$ (i.e., a smooth orientable manifold together with a fixed volume form $\mu$) endowed with a geometric structure.

Given a volume manifold $(M,\mu)$ and a vector field $X$, the scalar function $\operatorname{div}_{\mu} X$, uniquely defined by the relation $\mathcal{L}_{X}\mu=(\operatorname{div}_{\mu}X) \mu$, is called the \textit{divergence} of $X$ with respect to the volume form $\mu$, where $\mathcal{L}_{X}$ stands for the Lie derivative along the vector field $X$. As $\mathcal{L}_{X}\mu=\mathrm{d}(\mathbf{i}_{X}\mu)+\mathbf{i}_{X}(\mathrm{d}\mu)$ and $\mathrm{d}\mu=0$, the relation $\mathcal{L}_{X}\mu=(\operatorname{div}_{\mu}X) \mu$ is equivalent to $\mathrm{d}(\mathbf{i}_{X}\mu)=(\operatorname{div}_{\mu}X) \mu$. Recall that $\mathbf{i}_{X}\mu$ denotes the interior product of the vector field $X$ and the volume form $\mu$, i.e., $(\mathbf{i}_{X}\mu)(X_1,\dots,X_{n-1}):=\mu(X,X_1,\dots,X_{n-1}), ~\forall X_1,\dots,X_{n-1}\in\mathfrak{X}(M)$, where $n=\dim M$.

Now we have all necessary ingredients to define the Laplace--like operators associated to a geometric structure on a manifold. These operators extend to manifolds the Laplace--like operators from the linear setting, recently introduced in \cite{TDR}. 
\begin{definition}\label{lap2}
Let $b$ be a geometric structure on a volume manifold $(M,\mu)$, and $U\subseteq M$ an open set. Then for $F\in\mathcal{C}^{\infty}(U,\mathbb{R})$, the left--Laplacian of $F$, is given by $\Delta ^{L}_{b,\mu}F:=\operatorname{div_{\mu}}(\nabla ^{L}_{b}F)$, and similarly, the right--Laplacian of $F$, is given by $\Delta ^{R}_{b,\mu}F:=\operatorname{div_{\mu}}(\nabla ^{R}_{b}F)$.
\end{definition}
Note that $\mathcal{C}^2$ is the sufficient regularity class in order the definition of left/right--Laplacian holds. Nevertheless, we choose to work in the smooth class, for having a unitary approach throughout the article, and also to avoid intricate formulations.

Let us now analyze the relations between the Laplace--like operators associated to two different volume forms on the same manifold. More exactly, given another volume form, $\omega$, on the volume manifold $(M,\mu)$, there exists a function $f\in\mathcal{C}^{\infty}(M,\mathbb{R})$ with $f(m)\neq 0, ~\forall m\in M$, such that $\omega=f\mu$. Then using the formula $\operatorname{div}_{f\mu}(X)=\operatorname{div_{\mu}}(X)+\dfrac{1}{f}~\mathrm{d}f\cdot X$ for the vector fields $X=\nabla^{L/R}_{b}F$, we get that $\Delta ^{L/R}_{b,f\mu}F=\Delta ^{L/R}_{b,\mu}F+\dfrac{1}{f}~\mathrm{d}f\cdot \nabla^{L/R}_{b}F$. 

More precisely, in the hypothesis of Definition \ref{lap2}, we have
\begin{align*}
\Delta ^{L}_{b,f\mu}F=\Delta ^{L}_{b,\mu}F+\dfrac{1}{f}~b(\nabla^{L}_{b}f,\nabla^{L}_{b}F),~~~~
\Delta ^{R}_{b,f\mu}F=\Delta ^{R}_{b,\mu}F+\dfrac{1}{f}~b(\nabla^{R}_{b}F,\nabla^{R}_{b}f),
\end{align*}
or equivalently (cf. Proposition \ref{fimpo})
\begin{align*}
\Delta ^{L}_{b,f\mu}F=\Delta ^{L}_{b,\mu}F+ \dfrac{1}{f}~\{f,F\}_{b},  ~~~~
\Delta ^{R}_{b,f\mu}F=\Delta ^{R}_{b,\mu}F+ \dfrac{1}{f}~\{F,f\}_{b}.
\end{align*}
Hence, we proved the following result.
\begin{proposition}\label{lapul}
Let $b$ be a geometric structure on a volume manifold $(M,\mu)$, and let $\omega$ be another volume form on $M$. Then the following relations hold
\begin{align*}
\Delta ^{L}_{b,\omega}=\Delta ^{L}_{b,\mu}+ \dfrac{1}{f}~\{f,\cdot\}_{b},  ~~~~
\Delta ^{R}_{b,\omega}=\Delta ^{R}_{b,\mu}+ \dfrac{1}{f}~\{\cdot,f\}_{b},
\end{align*}
where $f\in\mathcal{C}^{\infty}(M,\mathbb{R})$ is the nowhere--vanishing function given by $\omega=f\mu$.
\end{proposition}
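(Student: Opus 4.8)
Proposition \ref{lapul} is essentially a clean restatement of a computation the text has already carried out in the paragraph immediately preceding it. The key ingredient is the divergence transformation formula under a conformal rescaling of the volume form, namely
\begin{equation*}
\operatorname{div}_{f\mu}(X)=\operatorname{div}_{\mu}(X)+\dfrac{1}{f}~\mathrm{d}f\cdot X,
\end{equation*}
valid for any nowhere--vanishing $f\in\mathcal{C}^{\infty}(M,\mathbb{R})$ and any $X\in\mathfrak{X}(M)$. The plan is to first establish this formula, then specialize $X$ to the left/right--gradient vector fields and unwind the definitions of the Laplace--like operators and the $b$--bracket.

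First I would prove the divergence rescaling identity. Writing $\omega=f\mu$, I compute the Lie derivative $\mathcal{L}_{X}(f\mu)$ using the Leibniz rule for $\mathcal{L}_X$ acting on the product of the function $f$ and the form $\mu$, obtaining $\mathcal{L}_{X}(f\mu)=(\mathcal{L}_X f)\mu+f\mathcal{L}_X\mu=(\mathrm{d}f\cdot X)\mu+f(\operatorname{div}_{\mu}X)\mu$. On the other hand, by the defining relation of divergence, $\mathcal{L}_{X}(f\mu)=(\operatorname{div}_{f\mu}X)(f\mu)$. Equating the two expressions and dividing through by $f\mu$ (legitimate since $f$ is nowhere vanishing) yields the claimed formula. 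This is the only genuine computation in the proof and it is entirely routine.

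Next I would substitute $X=\nabla^{L}_{b}F$ into the formula and invoke Definition \ref{lap2}, which gives directly
\begin{equation*}
\Delta^{L}_{b,f\mu}F=\operatorname{div}_{f\mu}(\nabla^{L}_{b}F)=\operatorname{div}_{\mu}(\nabla^{L}_{b}F)+\dfrac{1}{f}~\mathrm{d}f\cdot\nabla^{L}_{b}F=\Delta^{L}_{b,\mu}F+\dfrac{1}{f}~\mathrm{d}f\cdot\nabla^{L}_{b}F.
\end{equation*}
To reach the bracket form I apply the defining relation of the left--gradient, $\mathrm{d}f\cdot X=b(\nabla^{L}_{b}f,X)$ with $X=\nabla^{L}_{b}F$, so that $\mathrm{d}f\cdot\nabla^{L}_{b}F=b(\nabla^{L}_{b}f,\nabla^{L}_{b}F)=\{f,F\}_{b}$ by the definition of the $b$--bracket in Theorem \ref{leib2}. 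This produces the stated left identity $\Delta^{L}_{b,\omega}=\Delta^{L}_{b,\mu}+\frac{1}{f}\{f,\cdot\}_{b}$. The right--Laplacian case is handled symmetrically: here I use $\mathrm{d}f\cdot X=b(X,\nabla^{R}_{b}f)$ evaluated at $X=\nabla^{R}_{b}F$, giving $\mathrm{d}f\cdot\nabla^{R}_{b}F=b(\nabla^{R}_{b}F,\nabla^{R}_{b}f)=\{F,f\}_{b}$, and the right identity follows.

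There is no real obstacle in this argument; the only point requiring a little care is keeping the left/right bookkeeping straight, since the left--gradient pairs $\mathrm{d}f$ into the first slot of $b$ while the right--gradient pairs it into the second slot, which is precisely what produces $\{f,\cdot\}_b$ in the left case versus $\{\cdot,f\}_b$ in the right case. Since the body text has already displayed both equivalent forms, the proof amounts to assembling these pieces, and the statement follows.
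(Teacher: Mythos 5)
Your proposal is correct and follows essentially the same route as the paper: the paper's proof is precisely the paragraph preceding the proposition, which applies the rescaling formula $\operatorname{div}_{f\mu}(X)=\operatorname{div}_{\mu}(X)+\frac{1}{f}\,\mathrm{d}f\cdot X$ to $X=\nabla^{L/R}_{b}F$ and then rewrites $\mathrm{d}f\cdot\nabla^{L}_{b}F=b(\nabla^{L}_{b}f,\nabla^{L}_{b}F)=\{f,F\}_{b}$ and $\mathrm{d}f\cdot\nabla^{R}_{b}F=b(\nabla^{R}_{b}F,\nabla^{R}_{b}f)=\{F,f\}_{b}$, exactly as you do. The only difference is that you additionally prove the divergence rescaling formula via the Leibniz rule for $\mathcal{L}_{X}$, which the paper simply asserts; this is a harmless (and correct) addition.
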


As we deal with a geometric structure $b$ on a volume manifold $(M,\mu)$ where the volume form $\mu$ is explicitly mentioned by definition, from now on, each time the volume form is clear from the context, we will use the shorthand notation $\Delta ^{L/R}_{b,\mu}=:\Delta ^{L/R}_{b}$.

Next, we present some particular examples of Laplace--like operators, defined by a geometric structure on a Riemannian manifold. The particularity of this case is due to the existence of a natural divergence operator, associated to the canonical volume form induced by the Riemannian metric.
\begin{remark}\label{lap0}
If $b$ is a geometric structure on an orientable Riemannian manifold $(M,g)$ and $U\subseteq M$ an open set, then for $F\in\mathcal{C}^{\infty}(U,\mathbb{R})$, the left--Laplacian of $F$ is given by $\Delta ^{L}_{b}F=\operatorname{div_{g}}(\nabla ^{L}_{b}F)$, while the right--Laplacian of $F$ is given by $\Delta ^{R}_{b}F=\operatorname{div_{g}}(\nabla ^{R}_{b}F)$, where $\operatorname{div}_{g}X$ is the classical divergence operator induced by $g$, i.e., determined uniquely by the relation $\mathcal{L}_{X}\mu_{g}=(\operatorname{div}_{g} X) \mu_{g}$, with $\mu_{g}$ being the canonical Riemannian volume form on $M$ induced by $g$.
\end{remark}

Let us now give a representation of the main quantities from Remark \ref{lap0} in terms of local coordinates, $x=(x^1,\dots,x^n)$, where $n=\operatorname{dim}M$. More precisely, the metric $g$ is locally represented by the symmetric and positive definite matrix
\begin{equation*}
(g_{ij}(x))_{1\leq i,j\leq n}, ~ g_{ij}=g\left(\dfrac{\partial}{\partial x^i},\dfrac{\partial}{\partial x^j}\right),
\end{equation*}
whose determinant is denoted by $|g|:=\det(g_{ij})$.

For any vector field, locally given by $X=X^1 \dfrac{\partial}{\partial x^1}+\dots+X^n \dfrac{\partial}{\partial x^n}$, the expression for $\operatorname{div}_{g}X$ is 
\begin{align}\label{divloc}
\operatorname{div}_{g}X=\dfrac{1}{\sqrt{|g|}}\sum_{i=1}^{n}\dfrac{\partial}{\partial x^i}(\sqrt{|g|}~X^i).
\end{align}
Thus, the above equality together with the expressions \eqref{grdloc} of the left/right--gradient vector fields, imply the following local representations for the $b-$Laplace operators of a $\mathcal{C}^{\infty}$ scalar function $F$:
\begin{align*}
\Delta ^{L}_{b}F=\dfrac{1}{\sqrt{|g|}}\sum_{i=1}^{n}\dfrac{\partial}{\partial x^i}\left(\sum_{j=1}^{n}\mathcal{B}^{ji}\sqrt{|g|}~\dfrac{\partial F}{\partial x^{j}}\right),\\
\Delta ^{R}_{b}F=\dfrac{1}{\sqrt{|g|}}\sum_{i=1}^{n}\dfrac{\partial}{\partial x^i}\left(\sum_{j=1}^{n}\mathcal{B}^{ij}\sqrt{|g|}~\dfrac{\partial F}{\partial x^{j}}\right),
\end{align*}
where the geometric structure $b$ is locally given by the matrix 
\begin{equation*}
\mathcal{B}_x:=(\mathcal{B}_{ij}(x))_{1\leq i,j\leq n}, ~ \mathcal{B}_{ij}=b\left(\dfrac{\partial}{\partial x^i},\dfrac{\partial}{\partial x^j}\right),
\end{equation*}
and $(\mathcal{B}^{ij}(x))_{1\leq i,j\leq n}=\mathcal{B}^{-1}_x$.

Let us return now to the general case of Laplace--like operators on a volume manifold endowed with a geometric structure, and see what happens if the geometric strcuture is symmetric, skew--symmetric or constant.
\begin{remark}\label{REMIQ} The following assertions are direct consequences of Definition \ref{lap2}.
\begin{itemize}
\item[(i)] If $b$ is \textbf{symmetric} (i.e., $b$ is a Riemannian or semi--Riemannian structure) then $\Delta_{b}^{L}=\Delta_{b}^{R}$, thus inducing a geometric $b-$Laplace operator, denoted by $\Delta_b$. Particularly, if $b=g$ is a Riemannian metric on $M$ and $\mu=\mu_{g}$, then $\Delta_{b}=\Delta_{g}$, where $\Delta_g$ is the Laplace--Beltrami operator on the Riemannian manifold $(M,g)$; if $b=h$ with $h$ a Lorentzian metric on $M$ and $\mu=\mu_{h}$, then $\Delta_{b}=\square_h$, where $\square_h$ is the d'Alembert operator on the Lorentzian manifold $(M,h)$.
\item[(ii)]If $b$ is \textbf{skew--symmetric} (i.e., $b$ is an almost symplectic structure) then $\Delta_{b}^{L}=-\Delta_{b}^{R}$. Note that in order to exist an almost symplectic structure on $M$, its dimension must be even ($\operatorname{dim}M=2n$). Recall from Remark \ref{REMI} that \textbf{if moreover} $\mathrm{d}b=0$ (i.e., $(M,b)$ is a symplectic manifold) then for any $H\in\mathcal{C}^{\infty}(M,\mathbb{R})$, the associated Hamiltonian vector field $X_H$ equals $\nabla^{L}_{b}H=-\nabla^{R}_{b}H$. As $\mathcal{L}_{X_H}b=\mathrm{d}(\mathbf{i}_{X_H}b)+\mathbf{i}_{X_H}(\mathrm{d}b)=\mathrm{d}(\mathrm{d}H)+\mathbf{i}_{X_H}(0)=0$, we get that $\mathcal{L}_{X_H}\Lambda=0$ (where $\Lambda:=\dfrac{(-1)^{[n/2]}}{n!}~b^n$, is the Liouville volume form, and $b^n:=\underbrace{b\wedge\dots\wedge b}_\text{n-times}$), and hence $\operatorname{div}_{\Lambda}X_H=0, ~\forall H\in\mathcal{C}^{\infty}(M,\mathbb{R})$. Consequently, for any $H\in\mathcal{C}^{\infty}(M,\mathbb{R})$, $\Delta^{L}_{b,\Lambda}H=\operatorname{div}_{\Lambda}(\nabla^{L}_{b}H)=\operatorname{div}_{\Lambda}X_H=0$, and since $\Delta^{R}_{b,\Lambda}=-\Delta^{L}_{b,\Lambda}$, it follows that also $\Delta^{R}_{b,\Lambda}H=0$. Thus, on a symplectic manifold, \textbf{the Laplace--like operators defined with respect to the Liouville volume form, vanish identically}. On the other hand, by Example \ref{symplbr} and Proposition \ref{lapul}, it follows that if $\omega$ is an arbitrary fixed volume form on the symplectic manifold $(M,b)$, then $\Delta^{R}_{b,\omega}=\dfrac{1}{f}\mathcal{L}_{X_f}=-\Delta^{L}_{b,\omega}$, where $f\in\mathcal{C}^{\infty}(M,\mathbb{R})$ is the nowhere--vanishing function given by $\omega=f\Lambda$. 

\item[(iii)]If $b$ is a \textbf{constant} geometric structure on $\mathbb{R}^n$ endowed with the canonical volume form, it was recently proved in \cite{TDR} that $\Delta_{b}^{L}=\Delta_{b}^{R}$, \textbf{regardless the symmetry--like properties of} $b$.
\end{itemize}
\end{remark}

\subsection{Fundamental properties of Laplace--like operators}

In this subsection we provide some fundamental properties of the Laplace--like operators on a general volume manifold endowed with a geometric structure.
\begin{proposition}\label{pimpi}
Let $b$ be a geometric structure on a volume manifold $(M,\mu)$, and $U\subseteq M$ an open set. Then the following identities hold, for any $F,G\in\mathcal{C}^{\infty}(U,\mathbb{R})$ and $s,t\in\mathbb{R}$
\begin{itemize}
\item[(i)] $\Delta_{b}^{L/R}(s F+t G)=s \Delta_{b}^{L/R} F+t \Delta_{b}^{L/R} G,$
\item[(ii)] $\Delta_{b}^{L/R}(FG)=F\Delta_{b}^{L/R}G+G\Delta_{b}^{L/R}F+b(\nabla_{b}^{L/R}F,\nabla_{b}^{L/R}G)+b(\nabla_{b}^{L/R}G,\nabla_{b}^{L/R}F),$
\item[(iii)] $\Delta_{b}^{L}(FG)-F\Delta_{b}^{L}G-G\Delta_{b}^{L}F=\Delta_{b}^{R}(FG)-F\Delta_{b}^{R}G-G\Delta_{b}^{R}F.$
\end{itemize}
\end{proposition}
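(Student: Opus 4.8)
The plan is to reduce all three identities to the defining relation $\Delta_{b}^{L/R}F=\operatorname{div}_{\mu}(\nabla_{b}^{L/R}F)$, combined with two elementary product rules: a Leibniz rule for the gradient--like operators and the product rule for the divergence. Part (i) is then immediate: the left/right--gradient operators are $\mathbb{R}$--linear by construction, and $X\mapsto\operatorname{div}_{\mu}X$ is $\mathbb{R}$--linear since $\mathcal{L}_{X}\mu$ depends $\mathbb{R}$--linearly on $X$; hence the composition $\Delta_{b}^{L/R}$ is $\mathbb{R}$--linear, which gives (i).

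Next I would establish the Leibniz rule $\nabla_{b}^{L/R}(FG)=F\nabla_{b}^{L/R}G+G\nabla_{b}^{L/R}F$. For the left gradient this follows from $\mathrm{d}(FG)=F\,\mathrm{d}G+G\,\mathrm{d}F$ together with the defining relation $b(\nabla_{b}^{L}(FG),X)=\mathrm{d}(FG)\cdot X$: expanding the right--hand side and invoking the non--degeneracy of $b$ yields the claim, and the right case is identical. I would then record the divergence product rule $\operatorname{div}_{\mu}(fX)=f\operatorname{div}_{\mu}X+\mathrm{d}f\cdot X$, which comes from $\mathcal{L}_{fX}\mu=\mathrm{d}(f\,\mathbf{i}_{X}\mu)=\mathrm{d}f\wedge\mathbf{i}_{X}\mu+f\,\mathrm{d}(\mathbf{i}_{X}\mu)$ together with the top--degree identity $\mathrm{d}f\wedge\mathbf{i}_{X}\mu=(\mathrm{d}f\cdot X)\mu$.

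With these in hand, part (ii) is a direct computation. Applying $\operatorname{div}_{\mu}$ to $\nabla_{b}^{L}(FG)=F\nabla_{b}^{L}G+G\nabla_{b}^{L}F$ and using the divergence product rule yields
$$
\Delta_{b}^{L}(FG)=F\Delta_{b}^{L}G+G\Delta_{b}^{L}F+\mathrm{d}F\cdot\nabla_{b}^{L}G+\mathrm{d}G\cdot\nabla_{b}^{L}F.
$$
It then remains only to rewrite the last two terms, via the defining relation of the left gradient, as $b(\nabla_{b}^{L}F,\nabla_{b}^{L}G)$ and $b(\nabla_{b}^{L}G,\nabla_{b}^{L}F)$; the right case is handled identically, using $b(X,\nabla_{b}^{R}H)=\mathrm{d}H\cdot X$ to identify the cross terms, which again sum symmetrically.

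Finally, part (iii) follows by comparing the two instances of (ii): each difference $\Delta_{b}^{L/R}(FG)-F\Delta_{b}^{L/R}G-G\Delta_{b}^{L/R}F$ equals $b(\nabla_{b}^{L/R}F,\nabla_{b}^{L/R}G)+b(\nabla_{b}^{L/R}G,\nabla_{b}^{L/R}F)$, and by Proposition \ref{fimpo} the left and right versions of each summand coincide, so the two differences are equal. There is no genuine obstacle here; the only points requiring slight care are the top--degree wedge identity underlying the divergence product rule, and ensuring that in the right case the cross terms $\mathrm{d}F\cdot\nabla_{b}^{R}G$ and $\mathrm{d}G\cdot\nabla_{b}^{R}F$ are matched with the correct ordering of the arguments of $b$.
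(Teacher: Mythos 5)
Your proposal is correct and follows essentially the same route as the paper: linearity of $\nabla_{b}^{L/R}$ and $\operatorname{div}_{\mu}$ for (i), the gradient Leibniz rule plus the divergence product rule $\operatorname{div}_{\mu}(fX)=f\operatorname{div}_{\mu}X+\mathrm{d}f\cdot X$ and the defining relation of the gradients for (ii), and comparison of the two instances of (ii) via Proposition \ref{fimpo} for (iii). The only difference is that you spell out the auxiliary lemmas (and their proofs) that the paper uses implicitly, which is a point in your favor rather than a deviation.
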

\begin{proof}
$(i)$  For any $F,G\in\mathcal{C}^{\infty}(U,\mathbb{R})$ and $s,t\in\mathbb{R}$ we have
\begin{align*}
\Delta_{b}^{L/R}(s F+t G)&=\Delta_{b}^{L/R}(s F+t G)=\operatorname{div_\mu}(\nabla_{b}^{L/R}(s F+t G))=\operatorname{div_\mu}(s\nabla_{b}^{L/R} F+t\nabla_{b}^{L/R}G)\\
&=s\operatorname{div_\mu}(\nabla_{b}^{L/R} F)+t \operatorname{div_\mu}(\nabla_{b}^{L/R} G)=s \Delta_{b}^{L/R} F+t \Delta_{b}^{L/R} G.
\end{align*}
$(ii)$  We prove the required identity only for the left--Laplacian, since for the right--Laplacian the arguments are similar. Thus, for any $F,G\in\mathcal{C}^{\infty}(U,\mathbb{R})$ we get successively
\begin{align*}
\Delta_{b}^{L}(FG)&=\operatorname{div_\mu}(\nabla_{b}^{L}(FG))=\operatorname{div_\mu}(G\nabla_{g}^{L}F)+\operatorname{div_\mu}(F\nabla_{g}^{L}G)=G\operatorname{div_\mu}(\nabla_{b}^{L}F)+(\nabla_{b}^{L}F)(G)\\
&+F\operatorname{div_\mu}(\nabla_{b}^{L}G)+(\nabla_{b}^{L}G)(F)=(\nabla_{b}^{L}F)(G)+(\nabla_{b}^{L}G)(F)+F\operatorname{div_\mu}(\nabla_{b}^{L}G)\\
&+G\operatorname{div_\mu}(\nabla_{b}^{L}F)=\mathrm{d}G\cdot\nabla_{b}^{L}F +\mathrm{d}F\cdot\nabla_{b}^{L}G+F\Delta_{b}^{L}G+G\Delta_{b}^{L}F\\
&=b(\nabla_{b}^{L}G,\nabla_{b}^{L}F)+b(\nabla_{b}^{L}F,\nabla_{b}^{L}G)+F\Delta_{b}^{L}G+G\Delta_{b}^{L}F.
\end{align*}
$(iii)$ Using the previous identity, and the Proposition \ref{fimpo}, it follows that for any $F,G\in\mathcal{C}^{\infty}(U,\mathbb{R})$
\begin{align*}
\Delta_{b}^{L}(FG)-F\Delta_{b}^{L}G-&G\Delta_{b}^{L}F=b(\nabla_{b}^{L}F,\nabla_{b}^{L}G)+b(\nabla_{b}^{L}G,\nabla_{b}^{L}F)\\
&=b(\nabla_{b}^{R}F,\nabla_{b}^{R}G)+b(\nabla_{b}^{R}G,\nabla_{b}^{L}R)=\Delta_{b}^{R}(FG)-F\Delta_{b}^{R}G-G\Delta_{b}^{R}F,
\end{align*}
and thus we obtained the required identity.
\end{proof}

Recall now from the Remark \ref{REMIQ} that on a symplectic manifold $(M,b)$, the Laplace--like operators associated to any arbitrary fixed volume form $\omega$, are given by $\Delta^{R}_{b,\omega}=\dfrac{1}{f}\mathcal{L}_{X_f}=-\Delta^{L}_{b,\omega}$, where $f\in\mathcal{C}^{\infty}(M,\mathbb{R})$ is the nowhere--vanishing function defined by the equality $\omega=f\Lambda$, with $\Lambda$ being the Liouville volume form. Thus, both Laplace--like operators verify the Leibniz identity, since the Lie derivative, $\mathcal{L}_{X_f} : \mathcal{C}^{\infty}(M,\mathbb{R})\rightarrow  \mathcal{C}^{\infty}(M,\mathbb{R})$, do so. 

Obviously, this does not happen generally for Laplace--like operators generated by an arbitrary geometric structure $b$, as it is well known from classical properties of the Laplace--Beltrami operator (if $b$ is a Riemannian metric), or the d'Alembert operator (if $b$ is a Lorentzian metric).

The following result gives an analytic interpretation of the symmetric $b-$bracket, as being a measure for the failure of the Laplace--like operators to verify the Leibniz identity. 

\begin{theorem}\label{BE}
Let $b$ be a geometric structure on a volume manifold $(M,\mu)$, and $\{\cdot,\cdot\}_{b}$, $\{\cdot,\cdot\}_{b,sym}$ the associated $b-$bracket and symmetric $b-$bracket, respectively. Then 
\begin{itemize}
\item[(i)]$\{F,G\}_{b,sym}=\dfrac{1}{2}\left(\Delta_{b}^{L/R}(FG)-F\Delta_{b}^{L/R}G-G\Delta_{b}^{L/R}F\right), ~\forall F,G\in \mathcal{C}^{\infty}(M,\mathbb{R}),$
\item[(ii)] $\Delta_{b}^{L/R}\phi(F)=\phi^{\prime}(F)\Delta_{b}^{L/R}F+\phi^{\prime\prime}(F)\{F,F\}_{b}, ~ \forall F\in\mathcal{C}^{\infty}(M,\mathbb{R}), ~\forall\phi\in\mathcal{C}^{\infty}(\mathbb{R},\mathbb{R}).$
\end{itemize}
\end{theorem}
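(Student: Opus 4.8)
The plan is to deduce both identities from results already established, avoiding any computation in local coordinates.

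For part (i), I would begin with the product formula of Proposition \ref{pimpi}(ii),
$$\Delta_{b}^{L/R}(FG)=F\Delta_{b}^{L/R}G+G\Delta_{b}^{L/R}F+b(\nabla_{b}^{L/R}F,\nabla_{b}^{L/R}G)+b(\nabla_{b}^{L/R}G,\nabla_{b}^{L/R}F),$$
and isolate the combination $\Delta_{b}^{L/R}(FG)-F\Delta_{b}^{L/R}G-G\Delta_{b}^{L/R}F$, which equals $b(\nabla_{b}^{L/R}F,\nabla_{b}^{L/R}G)+b(\nabla_{b}^{L/R}G,\nabla_{b}^{L/R}F)$. By the definition of the $b$-bracket in Theorem \ref{leib2} (which provides both $\{F,G\}_{b}=b(\nabla_{b}^{L}F,\nabla_{b}^{L}G)$ and $\{F,G\}_{b}=b(\nabla_{b}^{R}F,\nabla_{b}^{R}G)$), the first summand is $\{F,G\}_{b}$ and the second is $\{G,F\}_{b}$, for either choice of superscript. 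Their sum is $2\{F,G\}_{b,sym}$ by Definition \ref{simbr}, and halving gives the claim.

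For part (ii), I would proceed in three short steps. First, since $\mathrm{d}(\phi(F))=\phi'(F)\,\mathrm{d}F$, the defining relations for the left/right--gradient immediately yield the chain rule $\nabla_{b}^{L/R}(\phi(F))=\phi'(F)\nabla_{b}^{L/R}F$. Second, applying the divergence and using its Leibniz property $\operatorname{div}_{\mu}(hX)=h\operatorname{div}_{\mu}X+\mathrm{d}h\cdot X$ (the same product rule already exploited in the proof of Proposition \ref{pimpi}(ii)) with $h=\phi'(F)$ and $X=\nabla_{b}^{L/R}F$, I obtain
$$\Delta_{b}^{L/R}(\phi(F))=\phi'(F)\,\Delta_{b}^{L/R}F+\phi''(F)\,\mathrm{d}F\cdot\nabla_{b}^{L/R}F.$$
Third, I would identify $\mathrm{d}F\cdot\nabla_{b}^{L/R}F=\{F,F\}_{b}$: setting $X=\nabla_{b}^{L}F$ in $b(\nabla_{b}^{L}F,X)=\mathrm{d}F\cdot X$ gives $\mathrm{d}F\cdot\nabla_{b}^{L}F=b(\nabla_{b}^{L}F,\nabla_{b}^{L}F)=\{F,F\}_{b}$, and symmetrically $\mathrm{d}F\cdot\nabla_{b}^{R}F=b(\nabla_{b}^{R}F,\nabla_{b}^{R}F)=\{F,F\}_{b}$ from the right--gradient definition.

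I do not expect a genuine obstacle, both statements are essentially bookkeeping once the right auxiliary facts are cited. The only point needing care is the final identification in part (ii): one must use the left--gradient defining relation for the $L$ case and the right--gradient defining relation for the $R$ case, so that in each case the diagonal pairing collapses to the same value $\{F,F\}_{b}$; it is precisely this coincidence, guaranteed by Proposition \ref{fimpo}, that allows the single formula to hold for both superscripts simultaneously.
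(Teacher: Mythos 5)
Your proposal is correct and follows essentially the same route as the paper: part (i) is the paper's own argument (Proposition \ref{pimpi}(ii) plus Definition \ref{simbr} and the coincidence from Proposition \ref{fimpo}/Theorem \ref{leib2}), and part (ii) reproduces the paper's computation exactly, namely the gradient chain rule, the Leibniz rule for $\operatorname{div}_{\mu}$, and the identification $\mathrm{d}F\cdot\nabla_{b}^{L/R}F=b(\nabla_{b}^{L/R}F,\nabla_{b}^{L/R}F)=\{F,F\}_{b}$. Your closing remark about using the left--gradient defining relation for the $L$ case and the right--gradient one for the $R$ case is exactly the point that makes the single formula valid for both superscripts, and it is implicit in the paper's proof.
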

\begin{proof}
\begin{itemize}
\item[(i)] The proof follows directly from Proposition \ref{pimpi}, Definition \ref{simbr} and Proposition \ref{fimpo}.
\item[(ii)] For any $F\in\mathcal{C}^{\infty}(M,\mathbb{R})$ and $\phi\in\mathcal{C}^{\infty}(\mathbb{R},\mathbb{R})$ we get successively
\begin{align*}
\Delta_{b}^{L/R}\phi(F)&=\operatorname{div_{\mu}}(\nabla ^{L/R}_{b}\phi(F))=\operatorname{div_{\mu}}(\phi^{\prime}(F)\nabla ^{L/R}_{b}F)=\phi^{\prime}(F)\operatorname{div_{\mu}}(\nabla ^{L/R}_{b}F)\\
&+(\nabla_{b}^{L/R}F)(\phi^{\prime}(F))=\phi^{\prime}(F)\Delta_{b}^{L/R}F+\mathrm{d}(\phi^{\prime}(F))\cdot\nabla_{b}^{L/R}F\\
&=\phi^{\prime}(F)\Delta_{b}^{L/R}F+\phi^{\prime\prime}(F)\mathrm{d}F\cdot\nabla_{b}^{L/R}F=\phi^{\prime}(F)\Delta_{b}^{L/R}F\\
&+\phi^{\prime\prime}(F)b(\nabla_{b}^{L/R}F,\nabla_{b}^{L/R}F)=\phi^{\prime}(F)\Delta_{b}^{L/R}F+\phi^{\prime\prime}(F)\{F,F\}_{b}.
\end{align*}
\end{itemize}
\end{proof}

As the $b-$bracket, and hence the symmetric $b-$bracket too, are \textbf{independent of the volume form}, Theorem \ref{BE} implies that \textbf{for any volume form} $\omega$ on the volume manifold $(M,\mu)$, the associated Laplace--like operators verify the identities
\begin{itemize}
\item[(i)]$\{F,G\}_{b,sym}=\dfrac{1}{2}\left(\Delta_{b,\omega}^{L/R}(FG)-F\Delta_{b,\omega}^{L/R}G-G\Delta_{b,\omega}^{L/R}F\right), ~\forall F,G\in \mathcal{C}^{\infty}(M,\mathbb{R}),$
\item[(ii)] $\Delta_{b,\omega}^{L/R}\phi(F)=\phi^{\prime}(F)\Delta_{b,\omega}^{L/R}F+\phi^{\prime\prime}(F)\{F,F\}_{b}, ~ \forall F\in\mathcal{C}^{\infty}(M,\mathbb{R}), ~\forall\phi\in\mathcal{C}^{\infty}(\mathbb{R},\mathbb{R}).$
\end{itemize}
Consequently, the following relations also hold
\begin{itemize}
\item[(i)]$\Delta_{b,\mu}^{L/R}(FG)-F\Delta_{b,\mu}^{L/R}G-G\Delta_{b,\mu}^{L/R}F=\Delta_{b,\omega}^{L/R}(FG)-F\Delta_{b,\omega}^{L/R}G-G\Delta_{b,\omega}^{L/R}F, ~\forall F,G\in \mathcal{C}^{\infty}(M,\mathbb{R}),$
\item[(ii)] $\Delta_{b,\mu}^{L/R}\phi(F)-\phi^{\prime}(F)\Delta_{b,\mu}^{L/R}F=\Delta_{b,\omega}^{L/R}\phi(F)-\phi^{\prime}(F)\Delta_{b,\omega}^{L/R}F, ~ \forall F\in\mathcal{C}^{\infty}(M,\mathbb{R}), ~\forall\phi\in\mathcal{C}^{\infty}(\mathbb{R},\mathbb{R}).$
\end{itemize}

\subsection{Laplace--like operators and geometromorphisms}

This subsection is devoted to analyzing the compatibility between left/right--Laplace operators and geometromorphisms. For doing this, we need to recall first a classical result concerning the compatibility between divergence operators and volume preserving diffeomorphisms. In order to provide a self-contained presentation, we shall give also a proof of this result.

\begin{proposition}\label{divi}
Let $(M,\mu^{M})$, $(N,\mu^{N})$ be two volume manifolds, and $\Phi:M\rightarrow N$ a diffeomorphism such that $\Phi^{\star}\mu^{N}=\mu^M$. Then the following relation holds true
\begin{equation*}
\Phi^{\star}\circ\operatorname{div}_{\mu^N}\circ\Phi_{\star}=\operatorname{div}_{\mu^M},
\end{equation*}
or explicitly
\begin{equation*}
\Phi^{\star}\operatorname{div}_{\mu^N}(\Phi_{\star}X)=\operatorname{div}_{\mu^M}X, ~\forall X\in\mathfrak{X}(M).
\end{equation*}
\end{proposition}
\begin{proof}
For any arbitrary fixed $X\in\mathfrak{X}(M)$, we have
\begin{align*}
(\Phi^{\star}\operatorname{div}_{\mu^N}(\Phi_{\star}X))\mu^M &=(\Phi^{\star}\operatorname{div}_{\mu^N}(\Phi_{\star}X))\Phi^{\star}\mu^N=\Phi^{\star}(\operatorname{div}_{\mu^N}(\Phi_{\star}X)\mu^N )=\Phi^{\star}(\mathrm{d}(\mathbf{i}_{\Phi_{\star}X}\mu^N ))\\
&=\mathrm{d}(\Phi^{\star}(\mathbf{i}_{\Phi_{\star}X}\mu^{N}))=\mathrm{d}(\mathbf{i}_{X}(\Phi^{\star}\mu^N))=\mathrm{d}(\mathbf{i}_{X}\mu^M)=(\operatorname{div}_{\mu^M}X)\mu^M,
\end{align*}
and consequently 
\begin{equation*}
\Phi^{\star}\operatorname{div}_{\mu^N}(\Phi_{\star}X)=\operatorname{div}_{\mu^M}X.
\end{equation*}
\end{proof}

An important particular case of Proposition \ref{divi} occures when  $(M,\mu^{M})=(N,\mu^{N})$. More precisely, the following result holds.

\begin{corollary}
Let $(M,\mu)$ be a volume manifold, and $\operatorname{Diff}_{\mu}(M):=\{\Phi\in\operatorname{Diff}(M)\mid \Phi^{\star}\mu=\mu\}$ the associated group of volume preserving diffeomorphisms. Then
\begin{equation*}
\operatorname{div}_{\mu}\circ\Phi_{\star}=(\Phi^{-1})^{\star}\circ\operatorname{div}_{\mu}, ~\forall \Phi\in\operatorname{Diff}_{\mu}(M),
\end{equation*}
or explicitly, in terms of group actions, 
\begin{align}\label{divequiv}
\operatorname{div}_{\mu}(\tilde{\tau}(\Phi)\bullet X)=\tau(\Phi)\bullet\operatorname{div}_{\mu}X, ~\forall X\in\mathfrak{X}(M),~ \forall \Phi\in\operatorname{Diff}_{\mu}(M),
\end{align}
where the actions of $\operatorname{Diff}(M)$ (and therefore of any of its subgroups, e.g., $\operatorname{Diff}_{\mu}(M)$) on $\mathcal{C}^{\infty}(M,\mathbb{R})$ and $\mathfrak{X}(M)$, are given by
\begin{itemize}
\item[(i)] $\tau(\Phi)\bullet F:=(\Phi^{-1})^{\star}F,~ \forall F\in\mathcal{C}^{\infty}(M,\mathbb{R}), ~\forall \Phi\in\operatorname{Diff}(M),$
\item[(ii)] $\tilde{\tau}(\Phi)\bullet X:=\Phi_{\star}X,~ \forall X\in\mathfrak{X}(M), ~\forall \Phi\in\operatorname{Diff}(M).$
\end{itemize}
The relation \eqref{divequiv} actually says that the operator $\operatorname{div}_{\mu}$ is $\operatorname{Diff}_{\mu}(M)-$equivariant.
\end{corollary}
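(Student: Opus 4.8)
The plan is to deduce the corollary directly from Proposition \ref{divi} by specializing to the case $(M,\mu^M)=(N,\mu^N)=(M,\mu)$ with $\Phi$ a single self-diffeomorphism. First I would fix an arbitrary $\Phi\in\operatorname{Diff}_{\mu}(M)$; by the very definition of this group we have $\Phi^{\star}\mu=\mu$, which is precisely the hypothesis $\Phi^{\star}\mu^N=\mu^M$ required to invoke Proposition \ref{divi} with both volume forms equal to $\mu$. That proposition then yields the operator identity $\Phi^{\star}\circ\operatorname{div}_{\mu}\circ\Phi_{\star}=\operatorname{div}_{\mu}$.

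Next I would rearrange this identity into the asserted form. Since $\Phi$ is a diffeomorphism, $\Phi^{\star}$ is invertible on $\mathcal{C}^{\infty}(M,\mathbb{R})$ with $(\Phi^{\star})^{-1}=(\Phi^{-1})^{\star}$; this follows from the functoriality of the pullback, namely $(\Phi\circ\Psi)^{\star}=\Psi^{\star}\circ\Phi^{\star}$ together with $(\operatorname{Id}_{M})^{\star}=\operatorname{Id}$, applied to $\Phi\circ\Phi^{-1}=\operatorname{Id}_{M}$. Composing the displayed identity on the left with $(\Phi^{\star})^{-1}$ then gives $\operatorname{div}_{\mu}\circ\Phi_{\star}=(\Phi^{-1})^{\star}\circ\operatorname{div}_{\mu}$, which is the first assertion of the corollary.

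Finally, to obtain the equivariance statement \eqref{divequiv}, I would simply unwind the two group actions. Applying the operator identity just obtained to an arbitrary $X\in\mathfrak{X}(M)$ and substituting the definitions $\tilde{\tau}(\Phi)\bullet X:=\Phi_{\star}X$ and $\tau(\Phi)\bullet F:=(\Phi^{-1})^{\star}F$ produces $\operatorname{div}_{\mu}(\tilde{\tau}(\Phi)\bullet X)=\tau(\Phi)\bullet\operatorname{div}_{\mu}X$ for every $\Phi\in\operatorname{Diff}_{\mu}(M)$, which is exactly the claimed $\operatorname{Diff}_{\mu}(M)$-equivariance of $\operatorname{div}_{\mu}$.

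There is no genuine obstacle here, as the entire substance is already contained in Proposition \ref{divi}; the corollary is merely its diagonal specialization. The only two points requiring mild care are the identity $(\Phi^{\star})^{-1}=(\Phi^{-1})^{\star}$, which I would justify by the contravariant functoriality of the pullback as indicated above, and the purely notational bookkeeping of translating the composition of operators into the $\bullet$-action notation. Neither step involves any computation beyond these observations.
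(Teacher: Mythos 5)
Your proof is correct and follows exactly the route the paper intends: the paper states this corollary without proof as the diagonal specialization $(M,\mu^{M})=(N,\mu^{N})$ of Proposition \ref{divi}, and your argument supplies precisely that specialization, the rearrangement via $(\Phi^{\star})^{-1}=(\Phi^{-1})^{\star}$, and the translation into the $\bullet$-action notation. Nothing is missing; the two mild points you flag (contravariant functoriality of the pullback and the notational unwinding) are handled correctly.
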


Next result gives a natural compatibility between left/right--Laplace operators on volume manifolds endowed with geometric structures, and volume preserving geometromorphisms.

\begin{theorem}\label{laplaceinvar}
Let $(M,\mu^{M},b^M)$, $(N,\mu^{N},b^N)$ be two volume manifolds endowed with geometric structures, and $\Phi:M\rightarrow N$ a geometromorphism such that $\Phi^{\star}\mu^{N}=\mu^M$. Then the following relation holds true
\begin{equation*}
\Delta^{L/R}_{b^M}\circ\Phi^{\star}=\Phi^{\star}\circ\Delta^{L/R}_{b^N},
\end{equation*}
or explicitly
\begin{equation*}
\Delta^{L/R}_{b^M}(\Phi^{\star}F)=\Phi^{\star}(\Delta^{L/R}_{b^N}F), ~\forall F\in\mathcal{C}^{\infty}(M,\mathbb{R}).
\end{equation*}
\end{theorem}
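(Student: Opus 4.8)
The plan is to peel off the definition $\Delta^{L/R}_{b}=\operatorname{div}_{\mu}\circ\nabla^{L/R}_{b}$ and then chain together the two equivariance statements already at our disposal: Theorem \ref{grdinvar} for the gradient factor, and Proposition \ref{divi} for the divergence factor. The key observation is that the single set of hypotheses on $\Phi$ simultaneously activates both results: being a geometromorphism gives $\Phi^{\star}b^{N}=b^{M}$ (needed for Theorem \ref{grdinvar}), while the extra assumption $\Phi^{\star}\mu^{N}=\mu^{M}$ is precisely what Proposition \ref{divi} requires.

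First I would fix $F\in\mathcal{C}^{\infty}(N,\mathbb{R})$ and rewrite Theorem \ref{grdinvar} in a form adapted to composition. Applying the inverse pushforward $(\Phi^{-1})_{\star}=(\Phi_{\star})^{-1}$ to both sides of the identity $\Phi_{\star}(\nabla^{L/R}_{b^{M}}(\Phi^{\star}F))=\nabla^{L/R}_{b^{N}}F$ yields
$$
\nabla^{L/R}_{b^{M}}(\Phi^{\star}F)=(\Phi^{-1})_{\star}\left(\nabla^{L/R}_{b^{N}}F\right).
$$

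Next I would substitute this into the definition of the Laplace--like operator and feed the result into Proposition \ref{divi}. Concretely,
$$
\Delta^{L/R}_{b^{M}}(\Phi^{\star}F)=\operatorname{div}_{\mu^{M}}\left(\nabla^{L/R}_{b^{M}}(\Phi^{\star}F)\right)=\operatorname{div}_{\mu^{M}}\left((\Phi^{-1})_{\star}\left(\nabla^{L/R}_{b^{N}}F\right)\right).
$$
Setting $X:=(\Phi^{-1})_{\star}(\nabla^{L/R}_{b^{N}}F)$, so that $\Phi_{\star}X=\nabla^{L/R}_{b^{N}}F$, Proposition \ref{divi} gives $\operatorname{div}_{\mu^{M}}X=\Phi^{\star}(\operatorname{div}_{\mu^{N}}(\Phi_{\star}X))$, whence
$$
\Delta^{L/R}_{b^{M}}(\Phi^{\star}F)=\Phi^{\star}\left(\operatorname{div}_{\mu^{N}}\left(\nabla^{L/R}_{b^{N}}F\right)\right)=\Phi^{\star}\left(\Delta^{L/R}_{b^{N}}F\right),
$$
which is the desired identity.

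There is no genuine obstacle here; the argument is a clean concatenation of two previously proved equivariance properties. The only points demanding care are bookkeeping ones: tracking the direction of the maps $\Phi_{\star}$ and $(\Phi^{-1})_{\star}$ so that the gradient identity is inverted correctly, and confirming that the two distinct hypotheses on $\Phi$ line up with the two results being invoked. Both the $L$ and the $R$ versions go through verbatim, so they can be handled in parallel using the $L/R$ notation.
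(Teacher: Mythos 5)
Your proposal is correct and follows essentially the same route as the paper: both proofs combine Theorem \ref{grdinvar} (equivariance of the gradient) with Proposition \ref{divi} (equivariance of the divergence), the only cosmetic difference being that the paper writes the argument as a chain of operator compositions with inserted factors $\Phi^{\star}\circ(\Phi^{\star})^{-1}$ and $\Phi_{\star}^{-1}\circ\Phi_{\star}$, while you fix $F$ and invert the gradient identity pointwise before feeding it to the divergence lemma.
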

\begin{proof}
Using Theorem \ref{grdinvar}, Proposition \ref{divi} and the definition of $b-$Laplace operators it follows that
\begin{align*}
\Delta^{L/R}_{b^M}\circ\Phi^{\star}=\operatorname{div}_{\mu^M}\circ\nabla^{L/R}_{b^M}\circ\Phi^{\star}&=\Phi^{\star}\circ (\Phi^{\star})^{-1}\circ\operatorname{div}_{\mu^M}\circ\Phi_{\star}^{-1}\circ\Phi_{\star}\circ\nabla^{L/R}_{b^M}\circ\Phi^{\star}\\
&=\Phi^{\star}\circ ((\Phi^{\star})^{-1}\circ\operatorname{div}_{\mu^M}\circ\Phi_{\star}^{-1})\circ(\Phi_{\star}\circ\nabla^{L/R}_{b^M}\circ\Phi^{\star})\\
&=\Phi^{\star}\circ\operatorname{div}_{\mu^N}\circ\nabla^{L/R}_{b^N}=\Phi^{\star}\circ\Delta^{L/R}_{b^N},
\end{align*}
and hence we obtained the conclusion.
\end{proof}

Let's see now what Theorem \ref{laplaceinvar} becomes in the particular case when $(M,\mu^{M},b^M)=(N,\mu^{N},b^N)$.

\begin{corollary}
Let $(M,\mu,b)$ be a volume manifold endowed with a geometric structure, and $\operatorname{Diff}_{\mu}(M,b):=\operatorname{Diff}_{\mu}(M)\cap\mathcal{G}(M,b)$ the associated group of volume preserving geometromorphisms. Then
\begin{equation*}
\Delta^{L/R}_{b}\circ\Phi^{\star}=\Phi^{\star}\circ\Delta^{L/R}_{b}, ~\forall \Phi\in\operatorname{Diff}_{\mu}(M,b),
\end{equation*}
or explicitly, in terms of group actions, 
\begin{align*}
\Delta^{L/R}_{b}(\tau(\Phi)\bullet F)=\tau(\Phi)\bullet(\Delta^{L/R}_{b}F), ~\forall F\in\mathcal{C}^{\infty}(M,\mathbb{R}),~ \forall \Phi\in\operatorname{Diff}_{\mu}(M,b),
\end{align*}
i.e., the operators $\Delta^{L}_{b}, \Delta^{R}_{b}$ are both $\operatorname{Diff}_{\mu}(M,b)-$equivariant.
\end{corollary}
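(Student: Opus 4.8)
The plan is to read off the corollary as the diagonal special case of Theorem \ref{laplaceinvar}. First I would set $(M,\mu^{M},b^{M})=(N,\mu^{N},b^{N})=(M,\mu,b)$ in that theorem and observe that an element $\Phi\in\operatorname{Diff}_{\mu}(M,b)=\operatorname{Diff}_{\mu}(M)\cap\mathcal{G}(M,b)$ satisfies exactly the two hypotheses required there: membership in $\mathcal{G}(M,b)$ says that $\Phi$ is a geometromorphism of $(M,b)$, while membership in $\operatorname{Diff}_{\mu}(M)$ says that $\Phi^{\star}\mu=\mu$. Hence Theorem \ref{laplaceinvar} applies verbatim and yields the intertwining identity $\Delta^{L/R}_{b}\circ\Phi^{\star}=\Phi^{\star}\circ\Delta^{L/R}_{b}$ for every such $\Phi$, which is the first displayed relation.

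The remaining task is to rewrite this identity in the equivariant form. Here I would first note that $\operatorname{Diff}_{\mu}(M,b)$ is a subgroup of $\operatorname{Diff}(M)$, being the intersection of the two subgroups $\operatorname{Diff}_{\mu}(M)$ and $\mathcal{G}(M,b)$ (the latter shown to be a group earlier in the article); in particular it is closed under taking inverses, so $\Phi^{-1}\in\operatorname{Diff}_{\mu}(M,b)$ whenever $\Phi$ is. Applying the intertwining identity already obtained to $\Phi^{-1}$ in place of $\Phi$ gives $\Delta^{L/R}_{b}\circ(\Phi^{-1})^{\star}=(\Phi^{-1})^{\star}\circ\Delta^{L/R}_{b}$, and unwinding the definition $\tau(\Phi)\bullet F:=(\Phi^{-1})^{\star}F$ turns this precisely into $\Delta^{L/R}_{b}(\tau(\Phi)\bullet F)=\tau(\Phi)\bullet(\Delta^{L/R}_{b}F)$, i.e.\ the claimed $\operatorname{Diff}_{\mu}(M,b)$-equivariance of $\Delta^{L}_{b}$ and $\Delta^{R}_{b}$.

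There is essentially no analytic obstacle here, since all the real work is carried out in Theorem \ref{laplaceinvar} (and, upstream, in Theorem \ref{grdinvar} and Proposition \ref{divi}). The only point deserving a moment's care is the bookkeeping with inverses: one must invoke the group property of $\operatorname{Diff}_{\mu}(M,b)$ to legitimately feed $\Phi^{-1}$ into the intertwining relation, because the group-action convention $\tau(\Phi)\bullet F=(\Phi^{-1})^{\star}F$ is phrased with a pullback by the inverse rather than by $\Phi$ itself. Once that is noted, the two displayed formulas in the statement are literally the same identity written in two notations.
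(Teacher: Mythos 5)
Your proposal is correct and matches the paper's (implicit) argument: the corollary is stated there as the diagonal specialization $(M,\mu^{M},b^{M})=(N,\mu^{N},b^{N})$ of Theorem \ref{laplaceinvar}, with no further proof given. Your additional care in passing to $\Phi^{-1}$ via the group property of $\operatorname{Diff}_{\mu}(M,b)$ to obtain the $\tau$-equivariant form is sound (one could equally note that $\Delta^{L/R}_{b}\circ\Phi^{\star}=\Phi^{\star}\circ\Delta^{L/R}_{b}$ implies the same relation for the inverse operator $(\Phi^{\star})^{-1}=(\Phi^{-1})^{\star}$), and fills in exactly the bookkeeping the paper leaves to the reader.
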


\section{Green's identities for the Laplace--like operators}

In this section we provide an equivalent of Green's identities for Laplace--like operators generated by a general geometric structure $b$. In the case when $b$ is a Riemannian metric, we recover the classical Green identities on Riemannian manifolds, while if $b$ is a symplectic structure, we recover some well known identities regarding the integration (with respect to the Liouville volume form) of Poisson bracket of compactly supported smooth functions. 

Let us state now the main result of this section, which gives the Green--like identities for the left/right--Laplace operators on a general geometric manifold.

\begin{theorem}\label{greengen}
Let $M$ be an oriented manifold with boundary, and $\mu$ a volume form on $M$. Let $b$ be a geometric structure on $M$, and $\{\cdot,\cdot\}_{b}$ the associated $b-$bracket. Then for any $F,G\in\mathcal{C}_{c}^{\infty}(M,\mathbb{R})$ the following identities hold:
\begin{itemize}
\item[(i)] 
\begin{equation*}
\int_{M}F\Delta_{b}^{L}G~\mu=-\int_{M}\{F,G\}_{b}~\mu+\int_{\partial M} F~\mathbf{i}_{\nabla_{b}^{L}G}~\mu,
\end{equation*}
\item[(ii)] 
\begin{equation*}
\int_{M}F\Delta_{b}^{R}G~\mu=-\int_{M}\{G,F\}_{b}~\mu+\int_{\partial M} F ~\mathbf{i}_{\nabla_{b}^{R}G}~\mu,
\end{equation*}
\item[(iii)] 
\begin{equation*}
\int_{M}(F\Delta_{b}^{L}G-G\Delta_{b}^{R}F)~\mu=\int_{\partial M}\mathbf{i}_{F\nabla_{b}^{L}G-G\nabla_{b}^{R}F}~\mu.
\end{equation*}
\end{itemize}
\end{theorem}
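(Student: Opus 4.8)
The plan is to reduce all three identities to Stokes' theorem applied to the vector fields $F\nabla_b^L G$ and $G\nabla_b^R F$, via the fundamental relation $\operatorname{div}_\mu(X)\,\mu=\mathrm{d}(\mathbf{i}_X\mu)$, which holds because $\mathrm{d}\mu=0$.

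First I would establish a pointwise Leibniz-type identity for the divergence. Using the product rule $\operatorname{div}_\mu(FX)=F\operatorname{div}_\mu(X)+\mathrm{d}F\cdot X$ (the same computation already carried out in Proposition \ref{pimpi}) together with Definition \ref{lap2}, applied to $X=\nabla_b^L G$, gives
$$\operatorname{div}_\mu(F\nabla_b^L G)=F\Delta_b^L G+\mathrm{d}F\cdot\nabla_b^L G.$$
The crucial point is then to recognize the last term as a $b$-bracket: from the defining relation $b(\nabla_b^L F,Y)=\mathrm{d}F\cdot Y$ with $Y=\nabla_b^L G$ one gets $\mathrm{d}F\cdot\nabla_b^L G=b(\nabla_b^L F,\nabla_b^L G)=\{F,G\}_b$, hence $\operatorname{div}_\mu(F\nabla_b^L G)=F\Delta_b^L G+\{F,G\}_b$. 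Integrating against $\mu$, rewriting the left-hand side as $\int_M\mathrm{d}(\mathbf{i}_{F\nabla_b^L G}\mu)$, and applying Stokes' theorem (the compact support of $F$ ensures convergence and that only the boundary contributes) yields identity (i), after using $\mathbf{i}_{FX}\mu=F\,\mathbf{i}_X\mu$.

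For (ii) I would repeat the argument with $X=\nabla_b^R G$, the only genuine subtlety being the order of the arguments in the resulting bracket. Here $\mathrm{d}F\cdot\nabla_b^R G$ must be computed using both gradient definitions together with Proposition \ref{fimpo}: applying $b(X,\nabla_b^R G)=\mathrm{d}G\cdot X$ with $X=\nabla_b^L F$ gives $\mathrm{d}F\cdot\nabla_b^R G=b(\nabla_b^L F,\nabla_b^R G)=\mathrm{d}G\cdot\nabla_b^L F=b(\nabla_b^L G,\nabla_b^L F)=\{G,F\}_b$, with the arguments \emph{reversed} relative to (i). This reversal, forced by the fact that $b$ need not be symmetric, is exactly the step I expect to require the most care; everything else is routine bookkeeping. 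The same Stokes argument then produces (ii).

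Finally, (iii) I would obtain for free by combining the first two. Writing (i) for the pair $(F,G)$ and (ii) for the pair $(G,F)$, namely $\int_M G\Delta_b^R F\,\mu=-\int_M\{F,G\}_b\,\mu+\int_{\partial M}G\,\mathbf{i}_{\nabla_b^R F}\,\mu$, and subtracting, the two interior integrals $\int_M\{F,G\}_b\,\mu$ cancel identically. What remains is $\int_M(F\Delta_b^L G-G\Delta_b^R F)\,\mu=\int_{\partial M}\left(F\,\mathbf{i}_{\nabla_b^L G}-G\,\mathbf{i}_{\nabla_b^R F}\right)\mu$, and linearity of the interior product together with $F\,\mathbf{i}_X\mu=\mathbf{i}_{FX}\mu$ rewrites the boundary integrand as $\mathbf{i}_{F\nabla_b^L G-G\nabla_b^R F}\,\mu$, which is precisely (iii).
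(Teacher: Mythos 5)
Your proof is correct and follows essentially the same route as the paper: apply the divergence theorem (Stokes via $\operatorname{div}_\mu(X)\,\mu=\mathrm{d}(\mathbf{i}_X\mu)$) to $F\nabla_b^L G$ and $F\nabla_b^R G$, identify the cross terms $\mathrm{d}F\cdot\nabla_b^{L/R}G$ with $\{F,G\}_b$ and $\{G,F\}_b$ respectively, and obtain (iii) by subtracting (ii) applied to the swapped pair from (i). The only cosmetic difference is in (ii), where the paper identifies $\mathrm{d}F\cdot\nabla_b^R G=b(\nabla_b^R G,\nabla_b^R F)=\{G,F\}_b$ in one step from the right-gradient definition, while you reach the same conclusion through a slightly longer chain via Proposition \ref{fimpo}.
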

\begin{proof}
\begin{itemize}
\item[(i)] The proof is based on applying the divergence theorem to the vector field $F\nabla_{b}^{L}G$, defined on the volume manifold $(M,\mu)$. In order to do this, we first compute
\begin{align*}
\operatorname{div_{\mu}}(F\nabla ^{L}_{b}G)&=F\operatorname{div_{\mu}}(\nabla_{b}^{L}G)+(\nabla_{b}^{L}G)(F)=F\Delta_{b}^{L}G+\mathrm{d}F\cdot\nabla_{b}^{L}G\\
&=F\Delta_{b}^{L}G+b(\nabla_{b}^{L}F,\nabla_{b}^{L}G)=F\Delta_{b}^{L}G+\{F,G\}_{b}.
\end{align*}
The above relation together with the divergence theorem yield
\begin{align*}
\int_{M}F\Delta_{b}^{L}G~\mu&=-\int_{M}\{F,G\}_{b}~\mu+\int_{M}\operatorname{div_{\mu}}(F\nabla ^{L}_{b}G)~\mu\\
&=-\int_{M}\{F,G\}_{b}~\mu+\int_{\partial M}\mathbf{i}_{F\nabla_{b}^{L}G} ~\mu\\
&=-\int_{M}\{F,G\}_{b}~\mu+\int_{\partial M}F~\mathbf{i}_{\nabla_{b}^{L}G} ~\mu,
\end{align*}
and thus we deduce the required identity.
\item[(ii)] For this item we use the divergence theorem for the vector field $F\nabla_{b}^{R}G$. As before, we first compute 
\begin{align*}
\operatorname{div_{\mu}}(F\nabla ^{R}_{b}G)&=F\operatorname{div_{\mu}}(\nabla_{b}^{R}G)+(\nabla_{b}^{R}G)(F)=F\Delta_{b}^{R}G+\mathrm{d}F\cdot\nabla_{b}^{R}G\\
&=F\Delta_{b}^{R}G+b(\nabla_{b}^{R}G,\nabla_{b}^{R}F)=F\Delta_{b}^{R}G+\{G,F\}_{b}.
\end{align*}
Applying the divergence theorem we obtain
\begin{align*}
\int_{M}F\Delta_{b}^{R}G~\mu&=-\int_{M}\{G,F\}_{b}~\mu+\int_{M}\operatorname{div_{\mu}}(F\nabla ^{R}_{b}G)~\mu\\
&=-\int_{M}\{G,F\}_{b}~\mu+\int_{\partial M} \mathbf{i}_{F\nabla_{b}^{R}G}~\mu\\
&=-\int_{M}\{G,F\}_{b}~\mu+\int_{\partial M} F~\mathbf{i}_{\nabla_{b}^{R}G}~\mu.
\end{align*}
\item[(iii)] Using relations $(i)$ and $(ii)$, we successively get
\begin{align*}
\int_{M}(F\Delta_{b}^{L}G-G\Delta_{b}^{R}F)~\mu &=\int_{M}F\Delta_{b}^{L}G~\mu-\int_{M}G\Delta_{b}^{R}F~\mu=-\int_{M}\{F,G\}_{b}~\mu\\
&+\int_{\partial M} F~\mathbf{i}_{\nabla_{b}^{L}G}~\mu -\left(-\int_{M}\{F,G\}_{b}~\mu+\int_{\partial M} G~\mathbf{i}_{\nabla_{b}^{R}F}~\mu\right)\\
&=\int_{\partial M}\left( F~\mathbf{i}_{\nabla_{b}^{L}G}~\mu - G~\mathbf{i}_{\nabla_{b}^{R}F}~\mu \right)=\int_{\partial M} \mathbf{i}_{F\nabla_{b}^{L}G-G\nabla_{b}^{R}F}~\mu .
\end{align*}
\end{itemize}
\end{proof}

Before stating next result, let us recall from Riesz representation theorem that given an oriented manifold $M$ together with a fixed volume form $\mu$, there is a unique measure $m_\mu$ on the Borel $\sigma-$algebra of $M$, such that for every continuous and compactly supported scalar function $F\in\mathcal{C}_{c}(M,\mathbb{R})$,
$$
\int_{M}F\mathrm{d}m_{\mu}=\int_{M}F\mu.
$$
For detalis regarding the above mentioned remark, see e.g., \cite{AMR}.

The following result is a restatement of Theorem \ref{greengen} in the case when the manifold $M$ is boundaryless. 
\begin{theorem}\label{green2}
Let $M$ be a boundaryless oriented manifold, and $\mu$ a volume form on $M$. Let $b$ be a geometric structure on $M$, and $\{\cdot,\cdot\}_{b}$ the associated $b-$bracket. Then for any $F,G\in\mathcal{C}_{c}^{\infty}(M,\mathbb{R})$ the following identities hold:
\begin{itemize}
\item[(i)] 
\begin{equation*}
\int_{M}F\Delta_{b}^{L}G~\mathrm{d}m_{\mu}=-\int_{M}\{F,G\}_{b}~\mathrm{d}m_{\mu},
\end{equation*}
\item[(ii)] 
\begin{equation*}
\int_{M}F\Delta_{b}^{R}G~\mathrm{d}m_{\mu}=-\int_{M}\{G,F\}_{b}~\mathrm{d}m_{\mu},
\end{equation*}
\item[(iii)] 
\begin{equation*}
(\Delta_{b}^{L}G,F)=\int_{M}F\Delta_{b}^{L}G~\mathrm{d}m_{\mu}=\int_{M}G\Delta_{b}^{R}F~\mathrm{d}m_{\mu}=( G,\Delta_{b}^{R}F),
\end{equation*}
\end{itemize}
where $(\cdot,\cdot)$ denotes the $L^{2}(M,m_{\mu})$ inner product.
\end{theorem}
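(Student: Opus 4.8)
The plan is to deduce Theorem \ref{green2} directly from Theorem \ref{greengen}, exploiting the two simplifications afforded by the boundaryless setting: the disappearance of all boundary integrals, and the passage from integration against the volume form $\mu$ to integration against the Radon measure $m_\mu$ supplied by the Riesz representation theorem recalled just above the statement.

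First I would observe that since $M$ is boundaryless we have $\partial M = \emptyset$, so every boundary integral appearing in Theorem \ref{greengen} is an integral over the empty set and hence vanishes. Applying Theorem \ref{greengen}(i) and (ii) therefore yields, for all $F,G\in\mathcal{C}_c^\infty(M,\mathbb{R})$, the identities
\[
\int_M F\Delta_b^L G\,\mu = -\int_M \{F,G\}_b\,\mu, \qquad \int_M F\Delta_b^R G\,\mu = -\int_M \{G,F\}_b\,\mu,
\]
while Theorem \ref{greengen}(iii) collapses to $\int_M F\Delta_b^L G\,\mu = \int_M G\Delta_b^R F\,\mu$.

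Next I would convert these $\mu$-integrals into $m_\mu$-integrals via the Riesz identity. The point to check is that each integrand lies in $\mathcal{C}_c(M,\mathbb{R})$: the functions $\Delta_b^{L/R}G$ and $\{F,G\}_b$ are smooth by construction, and each product such as $F\Delta_b^L G$ (or the bracket $\{F,G\}_b$) is supported inside $\operatorname{supp}F$ (respectively $\operatorname{supp}F\cap\operatorname{supp}G$), which is compact; hence every integrand is continuous and compactly supported, and the identity $\int_M(\cdot)\,\mu=\int_M(\cdot)\,\mathrm{d}m_\mu$ applies verbatim. This immediately produces (i) and (ii) in their stated form.

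Finally, for (iii) I would recognize the two outer equalities as the very definitions of the $L^2(M,m_\mu)$ inner product, namely $(\Delta_b^L G,F)=\int_M F\Delta_b^L G\,\mathrm{d}m_\mu$ and $(G,\Delta_b^R F)=\int_M G\Delta_b^R F\,\mathrm{d}m_\mu$, and establish the central equality either from the already-reduced Theorem \ref{greengen}(iii), or equivalently by comparing (i) with (ii) after interchanging the roles of $F$ and $G$: both $\int_M F\Delta_b^L G\,\mathrm{d}m_\mu$ and $\int_M G\Delta_b^R F\,\mathrm{d}m_\mu$ equal $-\int_M\{F,G\}_b\,\mathrm{d}m_\mu$. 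I do not anticipate a genuine obstacle, since the theorem is a specialization of Theorem \ref{greengen}; the only steps requiring care are the bookkeeping of the $F\leftrightarrow G$ swap in part (ii) when deriving (iii), and the verification that the integrands are compactly supported so that the Riesz identity legitimately transfers the equalities from $\mu$ to $m_\mu$.
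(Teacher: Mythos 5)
Your proposal is correct and follows exactly the route the paper intends: Theorem \ref{green2} is presented there as a direct restatement of Theorem \ref{greengen} for $\partial M=\emptyset$, with the boundary integrals vanishing and the Riesz identity converting $\mu$-integrals into $m_\mu$-integrals. Your additional care in checking compact support of the integrands and in tracking the $F\leftrightarrow G$ swap for part (iii) is exactly the right bookkeeping, and matches what the paper leaves implicit.
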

Obviously, in the case when $M$ is a manifold with boundary, the identities from Theorem \ref{green2} hold also for any compactly supported smooth functions vanishing on $\partial M$.

\begin{remark} Note that the identities from Theorem \ref{greengen} and \ref{green2} still hold true if \textbf{at least one} of the smooth functions $F,G$ is compactly supported. Consequently:
\begin{itemize}
\item[(i)] if $F\equiv 1$, the identities given by Theorem \ref{greengen} reduce to
\begin{equation*}
\int_{M}\Delta_{b}^{L/R}G~\mu=\int_{\partial M} \mathbf{i}_{\nabla_{b}^{L/R}G}~\mu, ~\forall G\in\mathcal{C}^{\infty}_{c}(M,\mathbb{R}),
\end{equation*}
\item[(ii)] if $F\equiv 1$, the identities given by Theorem \ref{green2} reduce to
\begin{equation*}
\int_{M}\Delta_{b}^{L/R}G~\mathrm{d}m_\mu=0, ~\forall G\in\mathcal{C}^{\infty}_{c}(M,\mathbb{R}).
\end{equation*}
\end{itemize}
\end{remark}

\subsection{Green's identities for Laplace--like operators on Riemannian manifolds}

In this subsection we present Green's identities for the left/right--Laplace operators associated to a geometric structure $b$, defined on a Riemannian manifold $(M,g)$. A special case occurs for $b=g$, when we recover the classical Green's identities for the Laplace--Beltrami operator. Before stating the main result, let us recall that if $(M,g)$ is an oriented Riemannian manifold with boundary $\partial M$, then there is a unique outward-pointing unit normal vector field $\nu$ along $\partial M$. In this case the Theorem  \ref{greengen} becomes:

\begin{corollary}\label{greengenR}
Let $(M,g)$ be an oriented Riemannian manifold with boundary $\partial M$, and $\mu_M$, $\mu_{\partial M}$ the canonical Riemannian volume forms on $M$ and $\partial M$, respectively. Let $b$ be a geometric structure on $M$, and $\{\cdot,\cdot\}_{b}$ the associated $b-$bracket. Then for any $F,G\in\mathcal{C}_{c}^{\infty}(M,\mathbb{R})$, the following identities hold:

\begin{itemize}
\item[(i)] 
\begin{equation*}
\int_{M}F\Delta_{b}^{L}G~\mathrm{d}m_{\mu_M}=-\int_{M}\{F,G\}_{b}~\mathrm{d}m_{\mu_M}+\int_{\partial M}F g(\nabla_{b}^{L}G,\nu)~\mathrm{d}m_{\mu_{\partial M}},
\end{equation*}
\item[(ii)] 
\begin{equation*}
\int_{M}F\Delta_{b}^{R}G~\mathrm{d}m_{\mu_M}=-\int_{M}\{G,F\}_{b}~\mathrm{d}m_{\mu_M}+\int_{\partial M}F g(\nabla_{b}^{R}G,\nu)~\mathrm{d}m_{\mu_{\partial M}},
\end{equation*}
\item[(iii)] 
\begin{equation*}
\int_{M}(F\Delta_{b}^{L}G-G\Delta_{b}^{R}F)~\mathrm{d}m_{\mu_M}=\int_{\partial M}g(F\nabla_{b}^{L}G-G\nabla_{b}^{R}F,\nu)~\mathrm{d}m_{\mu_{\partial M}},
\end{equation*}
\end{itemize}
where $\nu$ is the outward--pointing unit normal vector field along $\partial M$.

For $b=g$ we obtain that $\Delta_{b}^{L}=\Delta_{b}^{R}=\Delta_g$ (where $\Delta_g$ is the classical Laplace--Beltrami operator), thus the above relations become the Green's identities on a Riemannian manifold.
\end{corollary}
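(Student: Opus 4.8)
The plan is to obtain Corollary~\ref{greengenR} as a direct specialization of Theorem~\ref{greengen}, with the generic volume form $\mu$ taken to be the canonical Riemannian volume form $\mu_M$ on $M$. Applying Theorem~\ref{greengen} verbatim with $\mu:=\mu_M$ immediately produces the three bulk identities, the only remaining tasks being (a) to pass from integration against the form $\mu_M$ to integration against the induced measure $m_{\mu_M}$, and (b) to rewrite each boundary term $\int_{\partial M}F\,\mathbf{i}_{\nabla_b^{L/R}G}\,\mu_M$ in the Riemannian form $\int_{\partial M}F\,g(\nabla_b^{L/R}G,\nu)\,\mathrm{d}m_{\mu_{\partial M}}$. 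Step (a) is immediate from the Riesz correspondence recalled before Theorem~\ref{green2}, since every integrand is continuous and $F,G$ are compactly supported.

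The geometric heart of the argument is step (b), namely the identity
\[
\iota^{\star}\!\left(\mathbf{i}_{X}\mu_M\right)=g(X,\nu)\,\mu_{\partial M},
\]
where $\iota:\partial M\hookrightarrow M$ is the inclusion, $\nu$ the outward--pointing unit normal, and $\mu_{\partial M}=\iota^{\star}(\mathbf{i}_{\nu}\mu_M)$ the induced Riemannian volume form carrying the Stokes orientation. To prove it I would decompose $X$ along $\partial M$ into its normal and tangential parts, $X=g(X,\nu)\,\nu+X^{\top}$ with $X^{\top}$ tangent to $\partial M$. The tangential contribution pulls back to zero, because $\mathbf{i}_{X^{\top}}\mu_M$ evaluated on $n-1$ vectors tangent to $\partial M$ amounts to feeding $\mu_M$ the $n$ vectors $X^{\top},e_1,\dots,e_{n-1}$, all lying in the $(n-1)$--dimensional tangent space of $\partial M$, hence linearly dependent. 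The normal contribution gives $g(X,\nu)\,\iota^{\star}(\mathbf{i}_{\nu}\mu_M)=g(X,\nu)\,\mu_{\partial M}$ by the very definition of $\mu_{\partial M}$. Specializing to $X=\nabla_b^{L/R}G$ and integrating then converts each boundary term into the stated form, again invoking the Riesz correspondence on $\partial M$.

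Finally, the closing assertion for $b=g$ follows by combining Remark~\ref{REMI} (which gives $\nabla_b^{L}=\nabla_b^{R}=\nabla_g$) and Remark~\ref{REMIQ}(i) (which gives $\Delta_b^{L}=\Delta_b^{R}=\Delta_g$); under these identifications $\{F,G\}_b=g(\nabla_gF,\nabla_gG)$ and the three identities collapse to the classical Green's identities on $(M,g)$. I expect the main obstacle to be the careful handling of orientation conventions in step (b): one must ensure that the outward--normal--first convention $\mu_{\partial M}=\iota^{\star}(\mathbf{i}_{\nu}\mu_M)$ matches the boundary orientation implicit in the divergence theorem used inside Theorem~\ref{greengen}, so that no spurious sign enters the boundary integrals. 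Everything else reduces to the two applications of the Riesz correspondence and direct substitution.
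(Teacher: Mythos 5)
Your proposal is correct and follows essentially the same route as the paper, which presents Corollary~\ref{greengenR} as a direct specialization of Theorem~\ref{greengen} to $\mu=\mu_M$ (the paper gives no separate proof at all). The only substantive step the paper leaves implicit --- the identity $\iota^{\star}(\mathbf{i}_{X}\mu_M)=g(X,\nu)\,\mu_{\partial M}$ converting the boundary terms --- is exactly what you supply, and your normal/tangential decomposition argument for it is sound.
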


\begin{remark} Note that as in the general case, the identities from Corollary \ref{greengenR} still hold true if \textbf{at least one} of the smooth functions $F,G$ is compactly supported. Consequently, if $F\equiv 1$ the identities given by Corollary \ref{greengenR} reduce to
\begin{equation*}
\int_{M}\Delta_{b}^{L/R}G~\mathrm{d}m_{\mu_M}=\int_{\partial M}g(\nabla_{b}^{L/R}G,\nu)~\mathrm{d}m_{\mu_{\partial M}}, ~\forall G\in\mathcal{C}^{\infty}_{c}(M,\mathbb{R}).
\end{equation*}
\end{remark}

\subsection{Green's identities for Laplace--like operators on symplectic manifolds}

In this subsection we provide Green's identities for the left/right--Laplace operators on a symplectic manifold. Before stating the result, let us recall from Remark \ref{REMIQ} that if $(M,b)$ is a $2n-$dimensional symplectic manifold manifold, then for any $H\in\mathcal{C}^{\infty}(M,\mathbb{R})$, the associated Hamiltonian vector field $X_H$ equals $\nabla^{L}_{b}H=-\nabla^{R}_{b}H$. Moreover, if $\Lambda=\dfrac{(-1)^{[n/2]}}{n!}~b^n$ denotes the Liouville volume form of $(M,b)$, then $\operatorname{div}_{\Lambda}X_H=0$, for any $H\in\mathcal{C}^{\infty}(M,\mathbb{R})$. Consequently, the $b-$Laplace operators \textbf{associated to $\Lambda$ vanish identically}, i.e., $ \Delta^{R}_{b,\Lambda}H=\Delta^{L}_{b,\Lambda}H\equiv 0, ~\forall H\in\mathcal{C}^{\infty}(M,\mathbb{R})$. If instead $\omega$ is an arbitrary fixed volume form on $M$, then the associated Laplace--like operators are given by $\Delta^{R}_{b,\omega}=\dfrac{1}{f}\mathcal{L}_{X_f}=-\Delta^{L}_{b,\omega}$, where $f\in\mathcal{C}^{\infty}(M,\mathbb{R})$ denotes the nowhere--vanishing function given by $\omega=f\Lambda$. 

Thus, the above remarks together with the skew--symmetry of the $b-$bracket, imply that Theorem \ref{greengen} becomes:

\begin{corollary}\label{corol}
Let $(M,b)$ be a symplectic manifold, and $\{\cdot,\cdot\}_{b}$ the Poisson bracket associated to the symplectic structure $b$. If $\omega$ is an arbitrary given volume form on $M$, then for any $F,G\in\mathcal{C}_{c}^{\infty}(M,\mathbb{R})$ 
\begin{equation*}
\int_{M}\{F,G\}_{b}~\omega=-\int_{M}\dfrac{F}{f}\{f,G\}_{b}~\omega+\int_{\partial M} F ~\mathbf{i}_{X_G}\omega=-\int_{M}\{F,f\}_{b}\dfrac{G}{f}~\omega-\int_{\partial M} G ~\mathbf{i}_{X_F}\omega,
\end{equation*}
where $f\in\mathcal{C}^{\infty}(M,\mathbb{R})$ is the nowhere--vanishing function given by $\omega=f\Lambda$, with $\Lambda$ being the Liouville volume form of the symplectic manifold $(M,b)$.
\end{corollary}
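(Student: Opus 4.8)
The plan is to specialize Theorem \ref{greengen}, with the ambient volume form taken to be $\omega$, to the symplectic setting, and then to rewrite every ingredient in terms of the Poisson bracket and the Hamiltonian vector fields. First I would collect the three facts recalled just before the statement: on a symplectic manifold $\nabla_b^L G = X_G = -\nabla_b^R G$; the bracket is skew-symmetric, $\{F,G\}_b = -\{G,F\}_b$; and, writing $\omega = f\Lambda$, the Laplace--like operators with respect to $\omega$ are $\Delta^R_{b,\omega} = \frac{1}{f}\mathcal{L}_{X_f}$ and $\Delta^L_{b,\omega} = -\Delta^R_{b,\omega}$. Evaluating the first of these on a function $G$ and using $X_f = \nabla_b^L f$ gives $\Delta^R_{b,\omega}G = \frac{1}{f}\,\mathrm{d}G\cdot X_f = \frac{1}{f}\{G,f\}_b$, whence $\Delta^L_{b,\omega}G = \frac{1}{f}\{f,G\}_b$ by skew-symmetry.

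Next I would apply Theorem \ref{greengen}(i) with $\mu=\omega$. Substituting $\nabla_b^L G = X_G$ into the boundary integral and $\Delta^L_{b,\omega}G = \frac{1}{f}\{f,G\}_b$ into the bulk integral converts that identity into $\int_M \frac{F}{f}\{f,G\}_b\,\omega = -\int_M\{F,G\}_b\,\omega + \int_{\partial M} F\,\mathbf{i}_{X_G}\omega$. Solving for $\int_M\{F,G\}_b\,\omega$ reproduces verbatim the first of the two claimed expressions.

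For the second expression I would invoke skew-symmetry to write $\int_M\{F,G\}_b\,\omega = -\int_M\{G,F\}_b\,\omega$, and then apply the identity just derived with the roles of $F$ and $G$ interchanged; this gives $\int_M\{G,F\}_b\,\omega = -\int_M\frac{G}{f}\{f,F\}_b\,\omega + \int_{\partial M} G\,\mathbf{i}_{X_F}\omega$. Negating and replacing $\{f,F\}_b$ by $-\{F,f\}_b$ yields exactly $-\int_M\{F,f\}_b\frac{G}{f}\,\omega - \int_{\partial M} G\,\mathbf{i}_{X_F}\omega$, closing the chain of equalities. Alternatively, the second expression can be obtained directly from Theorem \ref{greengen}(ii); I expect both routes to agree.

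The argument is essentially bookkeeping, and I do not anticipate a genuine obstacle. The one place demanding care is sign tracking: the interchange $\nabla_b^L \leftrightarrow \nabla_b^R$ introduces a sign here, the two Laplacians differ by a sign, and the bracket is skew-symmetric, so it is worth verifying explicitly that the $\frac{1}{f}\mathcal{L}_{X_f}$ description of $\Delta^R_{b,\omega}$ really reproduces $\frac{1}{f}\{G,f\}_b$ before feeding it into Green's identity. Once these signs are pinned down, both halves of the corollary drop out symmetrically from a single application of Theorem \ref{greengen}(i) and its $F\leftrightarrow G$ counterpart.
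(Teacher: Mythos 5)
Your proposal is correct and takes essentially the same route as the paper: the paper obtains Corollary \ref{corol} precisely by specializing Theorem \ref{greengen} with $\mu=\omega$, using the recalled facts $\nabla^{L}_{b}G=X_G=-\nabla^{R}_{b}G$, $\Delta^{L}_{b,\omega}=\frac{1}{f}\{f,\cdot\}_{b}=-\Delta^{R}_{b,\omega}$ (equivalently $\Delta^{R}_{b,\omega}=\frac{1}{f}\mathcal{L}_{X_f}$), and the skew--symmetry of the bracket. Your sign bookkeeping, including the derivation of the second equality by the $F\leftrightarrow G$ swap (or equivalently from Theorem \ref{greengen}(ii) applied to the pair $(G,F)$), is accurate.
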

Note that as in the general case, the above identities still hold true if \textbf{at least one} of the smooth functions $F,G$ is compactly supported. Consequently, if $F\equiv 1$ the identities from Corollary \ref{corol} reduce to
\begin{equation*}
\int_{M}\dfrac{1}{f}\{f,G\}_{b}~\omega=\int_{\partial M} \mathbf{i}_{X_G}\omega, ~\forall G\in\mathcal{C}^{\infty}_{c}(M,\mathbb{R}).
\end{equation*}
Let us point out an important particular case of Corollary \ref{corol}. More precisely, if $\omega=\Lambda$, then $\{\cdot,f\}_{b}=\{f,\cdot\}_{b}\equiv 0$ (as $f\equiv 1$), and hence Corollary \ref{corol} becomes the well known result:

\begin{corollary}[\cite{MR}]
Let $(M,b)$ be a symplectic manifold, and $\{\cdot,\cdot\}_{b}$ the Poisson bracket associated to the symplectic structure $b$. If $\Lambda$ is the Liouville volume form of $(M,b)$, then for any $F,G\in\mathcal{C}_{c}^{\infty}(M,\mathbb{R})$ 
\begin{equation*}
\int_{M}\{F,G\}_{b}~\Lambda=\int_{\partial M} F~\mathbf{i}_{X_G}\Lambda =-\int_{\partial M} G~\mathbf{i}_{X_F}\Lambda.
\end{equation*}
If $F|_{\partial M}=0$ or $G|_{\partial M}=0$ then
\begin{equation}\label{ulull}
\int_{M}\{F,G\}_{b}~\Lambda=0.
\end{equation}
Obviously, if $\partial M=\emptyset$, then the relation \eqref{ulull} holds true for any $F,G\in\mathcal{C}_{c}^{\infty}(M,\mathbb{R})$.
\end{corollary}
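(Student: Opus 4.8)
The plan is to specialize Corollary~\ref{corol} to the case $\omega = \Lambda$, where everything collapses because the conformal factor becomes trivial. Since $\omega = f\Lambda$ and here $\omega = \Lambda$, the defining relation forces $f \equiv 1$. First I would substitute this into the two expressions on the right-hand side of Corollary~\ref{corol}. The crucial observation is that with $f \equiv 1$ we have $\mathrm{d}f = 0$, so the gradient $\nabla_b^{L/R} f$ vanishes identically, and consequently the $b$-brackets $\{f,G\}_b = b(\nabla_b^L f, \nabla_b^L G)$ and $\{F,f\}_b = b(\nabla_b^R F, \nabla_b^R f)$ are identically zero for every $F,G$. This wipes out the two volume integrals $\int_M \frac{F}{f}\{f,G\}_b\,\omega$ and $\int_M \{F,f\}_b \frac{G}{f}\,\omega$ on the right-hand side of Corollary~\ref{corol}.

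With those terms gone, the chain of equalities in Corollary~\ref{corol} reduces directly to
$$
\int_M \{F,G\}_b\,\Lambda = \int_{\partial M} F\,\mathbf{i}_{X_G}\Lambda = -\int_{\partial M} G\,\mathbf{i}_{X_F}\Lambda,
$$
which is precisely the first displayed identity of the statement. This step is essentially bookkeeping: the content is entirely inherited from Corollary~\ref{corol}, and I only need to verify that the substitution $f \equiv 1$ is legitimate (which it is, since $\Lambda$ itself is a valid choice of volume form on the symplectic manifold, $\Lambda$ being nowhere vanishing as the top power of a non-degenerate form).

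For the second assertion, suppose $F|_{\partial M} = 0$ (the case $G|_{\partial M} = 0$ being symmetric). Then the boundary integral $\int_{\partial M} F\,\mathbf{i}_{X_G}\Lambda$ vanishes because its integrand contains the factor $F$, which is zero at every point of $\partial M$; integration over $\partial M$ therefore yields zero. Combined with the identity just established, this gives $\int_M \{F,G\}_b\,\Lambda = 0$, which is relation~\eqref{ulull}. The case $G|_{\partial M} = 0$ follows identically using the other boundary expression $-\int_{\partial M} G\,\mathbf{i}_{X_F}\Lambda$.

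Finally, if $\partial M = \emptyset$, then both boundary integrals are over the empty set and hence vanish trivially, so \eqref{ulull} holds for all $F,G \in \mathcal{C}_c^\infty(M,\mathbb{R})$ without any boundary-vanishing hypothesis. I do not anticipate a genuine obstacle here, since the result is a direct corollary of the preceding one; the only point requiring a moment's care is confirming that $f \equiv 1$ annihilates the bracket terms, which hinges on the elementary fact that a constant function has vanishing gradient and hence vanishing $b$-bracket against anything.
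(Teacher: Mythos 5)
Your proposal is correct and follows essentially the same route as the paper: the paper likewise specializes Corollary~\ref{corol} to $\omega=\Lambda$, notes that $f\equiv 1$ forces $\{f,\cdot\}_{b}=\{\cdot,f\}_{b}\equiv 0$ (killing the volume integrals), and reads off the boundary identity, with the vanishing-on-$\partial M$ and empty-boundary cases following immediately as you describe.
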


\subsection{The Dirichlet energy associated to a geometric structure}

The aim of this short subsection is to present the Euler--Lagrange equations associated to Dirichlet energy in the context of a general geometric structure. In the case when the geometric structure is a Riemannian metric, we recover the classical Dirichlet energy, whose Euler--Lagrange equation is precisely the Laplace equation associated to Laplace--Beltrami operator while if the geometric structure is a Lorentzian metric, the Euler--Lagrange equation corresponding the Dirichlet's energy is exactly the wave equation associated to the d'Alembert operator.

As we shall analyze the case of a general geometric structure, in order to define a non--trivial Dirichet energy functional, the geometric structure it must not be skew--symmetric. More precisely, the following result holds. 

\begin{theorem}\label{Diri}
Let $M$ be a boundaryless oriented manifold, and $\mu$ a volume form on $M$. Let $b$ be a geometric structure on $M$ which is not skew--symmetric, and $\{\cdot,\cdot\}_{b}$ the associated $b-$bracket. For $F\in\mathcal{C}^{\infty}(M,\mathbb{R})$ we define the associated Dirichlet energy by
\begin{equation*}
\mathcal{E}(F):=\dfrac{1}{2}\int_{M}\{F,F\}_{b}~\mathrm{d}m_{\mu}.
\end{equation*}
Then the Euler--Lagrange equation for $\mathcal{E}$ is
\begin{equation*}
\dfrac{1}{2}(\Delta^{L}_{b}F+\Delta^{R}_{b}F)=0.
\end{equation*}
\end{theorem}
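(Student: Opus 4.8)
The plan is to compute the first variation of $\mathcal{E}$ and then invoke the fundamental lemma of the calculus of variations. Fix $F\in\mathcal{C}^{\infty}(M,\mathbb{R})$ and let $\eta\in\mathcal{C}_{c}^{\infty}(M,\mathbb{R})$ be an arbitrary compactly supported variation. First I would expand $\mathcal{E}(F+t\eta)$ using the $\mathbb{R}$--bilinearity of the $b$--bracket (Theorem \ref{leib2}):
\begin{align*}
\mathcal{E}(F+t\eta)=\mathcal{E}(F)+\dfrac{t}{2}\int_{M}\left(\{F,\eta\}_{b}+\{\eta,F\}_{b}\right)\mathrm{d}m_{\mu}+\dfrac{t^{2}}{2}\int_{M}\{\eta,\eta\}_{b}~\mathrm{d}m_{\mu},
\end{align*}
so that, differentiating at $t=0$ and recalling Definition \ref{simbr},
\begin{align*}
\left.\dfrac{\mathrm{d}}{\mathrm{d}t}\mathcal{E}(F+t\eta)\right|_{t=0}=\dfrac{1}{2}\int_{M}\left(\{F,\eta\}_{b}+\{\eta,F\}_{b}\right)\mathrm{d}m_{\mu}=\int_{M}\{F,\eta\}_{b,sym}~\mathrm{d}m_{\mu}.
\end{align*}

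The next step is to move the differentiation off $\eta$ and onto $F$ via Green's identities (Theorem \ref{green2}); this is legitimate since $\eta$ is compactly supported. Applying part (ii) to the term $\{F,\eta\}_{b}$ and part (i) to the term $\{\eta,F\}_{b}$ --- in each case matching $\eta$ with the factor standing outside the Laplacian, so that the Laplacian falls on $F$ --- I obtain
\begin{align*}
\int_{M}\{F,\eta\}_{b}~\mathrm{d}m_{\mu}=-\int_{M}\eta~\Delta_{b}^{R}F~\mathrm{d}m_{\mu},\qquad \int_{M}\{\eta,F\}_{b}~\mathrm{d}m_{\mu}=-\int_{M}\eta~\Delta_{b}^{L}F~\mathrm{d}m_{\mu}.
\end{align*}
Adding these and substituting above yields
\begin{align*}
\left.\dfrac{\mathrm{d}}{\mathrm{d}t}\mathcal{E}(F+t\eta)\right|_{t=0}=-\dfrac{1}{2}\int_{M}\eta\left(\Delta_{b}^{L}F+\Delta_{b}^{R}F\right)\mathrm{d}m_{\mu}.
\end{align*}
As an alternative to citing Green's identities, one may integrate by parts directly: the symmetric--bracket Leibniz vector field gives $\{F,\eta\}_{b,sym}=\frac{1}{2}\left(\mathrm{d}\eta\cdot\nabla_{b}^{L}F+\mathrm{d}\eta\cdot\nabla_{b}^{R}F\right)$, and $\int_{M}\mathrm{d}\eta\cdot X~\mathrm{d}m_{\mu}=-\int_{M}\eta\operatorname{div}_{\mu}X~\mathrm{d}m_{\mu}$ on the boundaryless $M$, taken with $X=\nabla_{b}^{L/R}F$.

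Finally, a critical point of $\mathcal{E}$ is characterized by the vanishing of the first variation for every $\eta\in\mathcal{C}_{c}^{\infty}(M,\mathbb{R})$. Since $\frac{1}{2}(\Delta_{b}^{L}F+\Delta_{b}^{R}F)$ is smooth and its integral against every compactly supported test function vanishes, the fundamental lemma of the calculus of variations forces the pointwise identity $\frac{1}{2}(\Delta_{b}^{L}F+\Delta_{b}^{R}F)=0$, which is the asserted Euler--Lagrange equation. I expect the only real subtlety to be the bookkeeping in the second paragraph: one must select the correct Green identity for each of the two bracket terms so that both Laplacians act on $F$ and reassemble into the symmetrized operator $\frac{1}{2}(\Delta_{b}^{L}+\Delta_{b}^{R})$. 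The hypothesis that $b$ is not skew--symmetric is not needed in the derivation itself; its role is merely to make $\mathcal{E}$ nontrivial, since $\{F,F\}_{b}=b(\nabla_{b}^{L}F,\nabla_{b}^{L}F)$ vanishes identically when $b$ is skew--symmetric (in which case $\Delta_{b}^{L}=-\Delta_{b}^{R}$, so the equation holds for every $F$).
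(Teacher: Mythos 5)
Your proof is correct and follows essentially the same route as the paper's: expand $\mathcal{E}(F+t\eta)$ by bilinearity of $\{\cdot,\cdot\}_{b}$, differentiate at $t=0$, convert $\int_{M}\{F,\eta\}_{b}\,\mathrm{d}m_{\mu}$ and $\int_{M}\{\eta,F\}_{b}\,\mathrm{d}m_{\mu}$ via parts (ii) and (i) of Theorem \ref{green2} into $-\int_{M}\eta\,\Delta_{b}^{R}F\,\mathrm{d}m_{\mu}$ and $-\int_{M}\eta\,\Delta_{b}^{L}F\,\mathrm{d}m_{\mu}$, and conclude by the fundamental lemma. Your alternative integration-by-parts argument merely unpacks the proof of those Green identities, and your closing remark on the role of the non-skew-symmetry hypothesis matches the paper's own motivation for it.
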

\begin{proof}
Let $F\in\mathcal{C}^{\infty}(M,\mathbb{R})$ be a critical point of $\mathcal{E}$. Then 
\begin{equation}\label{ELe}
\dfrac{\mathrm{d}}{\mathrm{d}\lambda}\mathcal{E}(F+\lambda\delta F)\mid_{\lambda=0}=0, ~\forall \delta F\in\mathcal{C}_{c}^{\infty}(M,\mathbb{R}).
\end{equation}
Using the properties of the $b-$bracket, for any $\delta F\in\mathcal{C}_{c}^{\infty}(M,\mathbb{R})$ and any $\lambda\in\mathbb{R}$, we get
\begin{align*}
\mathcal{E}(F+\lambda\delta F)&=\dfrac{1}{2}\int_{M}\{F+\lambda\delta F,F+\lambda\delta F\}_{b}~\mathrm{d}m_{\mu}=\dfrac{1}{2}\int_{M}\{F,F\}_{b}~\mathrm{d}m_{\mu}\\
&+\lambda\left(\dfrac{1}{2}\int_{M}\{F,\delta F\}_{b}~\mathrm{d}m_{\mu}+\dfrac{1}{2}\int_{M}\{\delta F,F\}_{b}~\mathrm{d}m_{\mu}\right)+\dfrac{\lambda^2}{2}\int_{M}\{\delta F,\delta F\}_{b}~\mathrm{d}m_{\mu}.
\end{align*}
From the above relations together with Theorem \ref{green2} we obtain that
\begin{align*}
\dfrac{\mathrm{d}}{\mathrm{d}\lambda}\mathcal{E}(F+\lambda\delta F)\mid_{\lambda=0}&=\dfrac{1}{2}\int_{M}\{F,\delta F\}_{b}~\mathrm{d}m_{\mu}+\dfrac{1}{2}\int_{M}\{\delta F,F\}_{b}~\mathrm{d}m_{\mu}\\
&=-\dfrac{1}{2}\int_{M}\Delta^{R}_{b}F \cdot\delta F~\mathrm{d}m_{\mu} - \dfrac{1}{2}\int_{M}\Delta^{L}_{b}F \cdot\delta F~\mathrm{d}m_{\mu}\\
&= - \int_{M}\dfrac{1}{2}\left(\Delta^{L}_{b}F+\Delta^{R}_{b}F\right)\delta F~\mathrm{d}m_{\mu}.
\end{align*}
Consequently, the equality \eqref{ELe} is equivalent to
\begin{equation*}
\int_{M}\dfrac{1}{2}\left(\Delta^{L}_{b}F+\Delta^{R}_{b}F\right)\delta F~\mathrm{d}m_{\mu}=0, ~\forall \delta F\in\mathcal{C}_{c}^{\infty}(M,\mathbb{R}),
\end{equation*}
and thus we get
\begin{equation*}
\dfrac{1}{2}(\Delta^{L}_{b}F+\Delta^{R}_{b}F)=0.
\end{equation*}
\end{proof}

In the hypothesis of Theorem \ref{Diri}, if $b$ is a symmetric geometric structure (i.e., $(M,b)$ is a Riemannian or semi--Riemannian manifold) then $\Delta^{L}_{b}=\Delta^{R}_{b}=:\Delta_b$ and consequently the Euler--Lagrange equations for $\mathcal{E}$ is $\Delta_{b}F=0$. In this case the solutions to Euler--Lagrange equations are the $b-$harmonic functions. 

Notice that if $(M,b)$ is a Riemannian manifold, the equation $\Delta_{b}F=0$ is precisely the classical Laplace equation associated to Laplace--Beltrami operator, while if $(M,b)$ is a Lorentzian manifold, the equation $\Delta_{b}F=0$ is precisely the wave equation associated to d'Alembert operator, i.e., $\square_{b}F=0$.

\section{Dynamical properties of gradient--like vector fields}

In this section we present some important dynamical properties of left/right--gradient vector fields associated to a general geometric structure on a manifold. For particular examples of geometric structures (e.g., Riemannian or semi--Riemannian metrics, symplectic structures) we recover some well known properties of various classes of vector fields (e.g., gradient vector fields, Hamiltonian vector fields).

\subsection{The flow of left/right--gradients and the $b-$bracket}

Let us start by providing a compatibility result between the flow of a left/right--gradient vector field generated by a general geometric structure $b$, and the associated $b-$bracket. This is the correspondent on geometric manifolds of the similar result regarding the Hamiltonian vector fields on symplectic manifolds, see, e.g., \cite{MR}, \cite{RTC}.
\begin{theorem}
Let $b$ be a geometric structure on a manifold $M$, and $\{\cdot,\cdot\}_b$ the associated $b-$bracket. Let $F\in\mathcal{C}^{\infty}(M,\mathbb{R})$ be a smooth function, and $\nabla^{L/R}_{b}F\in\mathfrak{X}(M)$ the corresponding gradients. If $\Phi^{L/R}_{t}$ denotes the flow of $\nabla^{L/R}_{b}F$, then for any $f\in\mathcal{C}^{\infty}(M,\mathbb{R})$
\begin{align*}
\dfrac{\mathrm{d}}{\mathrm{d}t}(\Phi^{L}_{t})^{\star}f=(\Phi^{L}_{t})^{\star}\{f,F\}_b, ~~~~\dfrac{\mathrm{d}}{\mathrm{d}t}(\Phi^{R}_{t})^{\star}f=(\Phi^{R}_{t})^{\star}\{F,f\}_b,
\end{align*}
where $(\Phi^{L/R}_{t})^{\star}g:=g\circ\Phi^{L/R}_{t}$, for any $g\in\mathcal{C}^{\infty}(M,\mathbb{R})$.
\end{theorem}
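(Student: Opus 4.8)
The plan is to reduce the statement to the elementary identity governing how a pullback along a flow differentiates in time, and then to recognize the resulting time--derivative as a $b$--bracket.

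First I would recall the basic flow identity: if $X\in\mathfrak{X}(M)$ has flow $\Psi_t$, then for every $f\in\mathcal{C}^{\infty}(M,\mathbb{R})$ one has $\dfrac{\mathrm{d}}{\mathrm{d}t}(\Psi_t^{\star}f)=\Psi_t^{\star}(\mathrm{d}f\cdot X)$. This follows directly from the chain rule together with the defining equation $\dfrac{\mathrm{d}}{\mathrm{d}t}\Psi_t=X\circ\Psi_t$ of the flow: evaluating at a point $m$, one has $\dfrac{\mathrm{d}}{\mathrm{d}t}f(\Psi_t(m))=\mathrm{d}f_{\Psi_t(m)}(X(\Psi_t(m)))=(\mathrm{d}f\cdot X)(\Psi_t(m))$, which is precisely $(\Psi_t^{\star}(\mathrm{d}f\cdot X))(m)$.

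Next I would apply this with $X=\nabla^{L}_{b}F$ and $\Psi_t=\Phi^{L}_{t}$, obtaining immediately $\dfrac{\mathrm{d}}{\mathrm{d}t}(\Phi^{L}_{t})^{\star}f=(\Phi^{L}_{t})^{\star}(\mathrm{d}f\cdot\nabla^{L}_{b}F)$, so that the whole matter reduces to identifying the scalar $\mathrm{d}f\cdot\nabla^{L}_{b}F$ with $\{f,F\}_{b}$. By the defining relation of the left--gradient, $\mathrm{d}f\cdot Y=b(\nabla^{L}_{b}f,Y)$ for every $Y\in\mathfrak{X}(M)$; choosing $Y=\nabla^{L}_{b}F$ yields $\mathrm{d}f\cdot\nabla^{L}_{b}F=b(\nabla^{L}_{b}f,\nabla^{L}_{b}F)=\{f,F\}_{b}$. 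Substituting this back gives the left identity. Equivalently, this is the earlier observation that the left--Leibniz vector field generated by $F$ coincides with $\nabla^{L}_{b}F$.

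The right identity follows the same route, but requires attention to the left/right conventions, which is the only genuinely delicate point. With $X=\nabla^{R}_{b}F$ and $\Psi_t=\Phi^{R}_{t}$, the flow identity gives $\dfrac{\mathrm{d}}{\mathrm{d}t}(\Phi^{R}_{t})^{\star}f=(\Phi^{R}_{t})^{\star}(\mathrm{d}f\cdot\nabla^{R}_{b}F)$. Now the right--gradient is defined by $b(Y,\nabla^{R}_{b}f)=\mathrm{d}f\cdot Y$, so taking $Y=\nabla^{R}_{b}F$ produces $\mathrm{d}f\cdot\nabla^{R}_{b}F=b(\nabla^{R}_{b}F,\nabla^{R}_{b}f)=\{F,f\}_{b}$ --- note that $F$ now occupies the first slot of the bracket, which is exactly why the right flow transports $f$ by $\{F,f\}_{b}$ rather than by $\{f,F\}_{b}$. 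There is no serious obstacle beyond this bookkeeping: once the slot placement is tracked correctly, both identities drop out in one line each from the flow identity and the defining relations of the gradients and of the $b$--bracket.
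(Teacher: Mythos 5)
Your proposal is correct and follows essentially the same route as the paper's proof: differentiate the pullback along the flow to get $(\Phi^{L/R}_{t})^{\star}(\mathrm{d}f\cdot\nabla^{L/R}_{b}F)$, then use the defining relations of the left/right--gradients to identify this scalar with $(\Phi^{L}_{t})^{\star}\{f,F\}_{b}$, respectively $(\Phi^{R}_{t})^{\star}\{F,f\}_{b}$. The only difference is that you spell out the chain--rule justification of the flow identity, which the paper uses implicitly.
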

\begin{proof}
We shall prove first the equality concerning the flow of the left-gradient. In order to do this, for any arbitrary fixed $f\in\mathcal{C}^{\infty}(M,\mathbb{R})$ we have
\begin{align*}
\dfrac{\mathrm{d}}{\mathrm{d}t}(\Phi^{L}_{t})^{\star}f&=(\Phi^{L}_{t})^{\star}(\mathrm{d}f\cdot\nabla^{L}_{b}F)=(\Phi^{L}_{t})^{\star}(b(\nabla^{L}_{b}f,\nabla^{L}_{b}F))\\
&=(\Phi^{L}_{t})^{\star}\{f,F\}_b.
\end{align*}
The equality regarding the flow of the right-gradient follows mimetically. More precisely, for any arbitrary fixed $f\in\mathcal{C}^{\infty}(M,\mathbb{R})$ we have
\begin{align*}
\dfrac{\mathrm{d}}{\mathrm{d}t}(\Phi^{R}_{t})^{\star}f&=(\Phi^{R}_{t})^{\star}(\mathrm{d}f\cdot\nabla^{R}_{b}F)=(\Phi^{R}_{t})^{\star}(b(\nabla^{R}_{b}F,\nabla^{R}_{b}f))\\
&=(\Phi^{R}_{t})^{\star}\{F,f\}_b,
\end{align*}
and hence we get the conclusion.
\end{proof}

\begin{corollary}
Let $b$ be a geometric structure on a manifold $M$, $\{\cdot,\cdot\}_b$ the associated $b-$bracket, and $F,G\in\mathcal{C}^{\infty}(M,\mathbb{R})$. Then the following three assertions are equivalent:
\begin{itemize}
\item[(i)] $\{F,G\}_b =0$,
\item[(ii)] $F$ is a constant of motion of $\nabla^{L}_{b}G$,
\item[(ii)] $G$ is a constant of motion of $\nabla^{R}_{b}F$.
\end{itemize}
\end{corollary}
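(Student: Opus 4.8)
The plan is to prove the three-way equivalence by showing that each of the two dynamical conditions reduces, via the defining relations of the gradient-like vector fields, to the single algebraic condition $\{F,G\}_b = 0$.

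First I would recall the standard characterization of a first integral: a smooth function $h$ is a constant of motion of a vector field $X$ if and only if $X(h) = \mathrm{d}h \cdot X = 0$ identically, since for the flow $\Phi_t$ of $X$ one has $\frac{\mathrm{d}}{\mathrm{d}t}(h \circ \Phi_t) = (X(h)) \circ \Phi_t$. This is, in fact, exactly the content of the preceding theorem evaluated at $t=0$ (where the flow is the identity), with generating function $G$ for the left assertion and $F$ for the right one; combined with the theorem's formula it shows that the derivative vanishes for all $t$ as soon as it vanishes at $t=0$, so the equivalence with genuine constancy along integral curves is legitimate.

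Next I would perform the two elementary computations that identify each directional derivative with the $b$-bracket. For the second assertion, testing the defining relation $b(\nabla^{L}_{b} F, X) = \mathrm{d}F \cdot X$ of the left-gradient against $X = \nabla^{L}_{b} G$ gives
$$(\nabla^{L}_{b} G)(F) = \mathrm{d}F \cdot \nabla^{L}_{b} G = b(\nabla^{L}_{b} F, \nabla^{L}_{b} G) = \{F,G\}_b,$$
so $F$ is a constant of motion of $\nabla^{L}_{b} G$ precisely when $\{F,G\}_b = 0$. For the third assertion, testing the defining relation $b(X, \nabla^{R}_{b} G) = \mathrm{d}G \cdot X$ of the right-gradient against $X = \nabla^{R}_{b} F$ gives
$$(\nabla^{R}_{b} F)(G) = \mathrm{d}G \cdot \nabla^{R}_{b} F = b(\nabla^{R}_{b} F, \nabla^{R}_{b} G) = \{F,G\}_b,$$
so $G$ is a constant of motion of $\nabla^{R}_{b} F$ precisely when $\{F,G\}_b = 0$. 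Chaining these two equivalences through the common condition $\{F,G\}_b = 0$ delivers the full statement.

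I expect no genuine obstacle, as the result is essentially a reformulation of the definitions. The only point requiring attention is the bookkeeping of the left/right slots: one must pair the two left-gradients (using the first defining relation) for the left assertion and the two right-gradients (using the second defining relation) for the right assertion, so that both directional derivatives collapse onto the same quantity, the identity $b(\nabla^{L}_{b} F, \nabla^{L}_{b} G) = b(\nabla^{R}_{b} F, \nabla^{R}_{b} G) = \{F,G\}_b$ being guaranteed by Proposition \ref{fimpo} and the definition of the $b$-bracket.
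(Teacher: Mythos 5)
Your proposal is correct and follows essentially the same route as the paper: the corollary there is a direct consequence of the preceding flow theorem, whose proof consists of exactly the two computations you perform, namely pairing the defining relation of $\nabla^{L}_{b}$ against $\nabla^{L}_{b}G$ and that of $\nabla^{R}_{b}$ against $\nabla^{R}_{b}F$, and identifying both directional derivatives with $\{F,G\}_{b}$ via Proposition \ref{fimpo}. Your explicit remark that the infinitesimal characterization of a constant of motion is the theorem's formula evaluated at $t=0$ makes the link to the paper's statement precise, so nothing is missing.
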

In the case when $b$ is symmetric (i.e., $(M,b)$ is a Riemannian or a semi--Riemannian manifold) or skew--symmetric (i.e., $(M,b)$ is an almost--symplectic manifold), we recover the well known equivalences (see, e.g., \cite{MR}, \cite{RTC}):
\begin{itemize}
\item[(i)] $\{F,G\}_b =0$,
\item[(ii)] $F$ is a constant of motion of $\nabla_{b}G$, if $b$ is symmetric  ($F$ is a constant of motion of $X_G$, if $b$ is skew--symmetric),
\item[(ii)] $G$ is a constant of motion of $\nabla_{b}F$, if $b$ is symmetric  ($G$ is a constant of motion of $X_F$, if $b$ is skew--symmetric),
\end{itemize}
where $\nabla^{L}_{b}=\nabla^{R}_{b}=:\nabla_{b}$, if $b$ is symmetric, while $\nabla^{L}_{b}=-\nabla^{R}_{b}=:X_{\bullet}=\{\cdot,\bullet\}_b$, if $b$ is skew--symmetric.

\subsection{A transport theorem}

In this short subsection we present a transport theorem associated to flows of left/right--gradient vector fields generated by a general geometric structure. This result gives also a dynamical interpretation of the Laplace--like operators.
\begin{theorem}\label{flowth}
Let $b$ be a geometric structure on an oriented manifold $M$ endowed with a volume form $\mu$, and $F\in\mathcal{C}^{\infty}(M,\mathbb{R})$. If $\Phi_{t}^{L/R}$ denotes the flow of the left/right--gradient vector field $\nabla^{L/R}_{b}F$, then
\begin{equation*}
\dfrac{\mathrm{d}}{\mathrm{d}t} m_{\mu}\left(\Phi_{t}^{L/R}(U)\right)=\int_{\Phi_{t}^{L/R}(U)}\Delta^{L/R}_{b}F ~\mathrm{d}m_{\mu},
\end{equation*}
where $U$ is a compact regular domain contained in the domain of definition of the flow.
\end{theorem}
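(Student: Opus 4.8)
The plan is to reduce the statement to the fundamental relation between the flow of a vector field and the Lie derivative it induces on the volume form, and then to invoke the definition of the divergence together with Definition \ref{lap2}. Throughout, I would abbreviate $X:=\nabla^{L/R}_{b}F$ and $\Phi_{t}:=\Phi^{L/R}_{t}$ for the corresponding flow, and recall that, by the Riesz correspondence recalled before Theorem \ref{green2}, one has $m_{\mu}(\Phi_{t}(U))=\int_{\Phi_{t}(U)}\mu$.

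First I would pull the moving integral back to the fixed domain $U$ by the change--of--variables (pullback) formula for diffeomorphisms, obtaining
$$
m_{\mu}(\Phi_{t}(U))=\int_{\Phi_{t}(U)}\mu=\int_{U}\Phi_{t}^{\star}\mu.
$$
Since $U$ is compact and the integrand depends smoothly on $t$, differentiation under the integral sign is legitimate, so that
$$
\dfrac{\mathrm{d}}{\mathrm{d}t}\,m_{\mu}(\Phi_{t}(U))=\int_{U}\dfrac{\mathrm{d}}{\mathrm{d}t}\Phi_{t}^{\star}\mu.
$$

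The central step is the classical identity $\dfrac{\mathrm{d}}{\mathrm{d}t}\Phi_{t}^{\star}\mu=\Phi_{t}^{\star}(\mathcal{L}_{X}\mu)$, which expresses the time derivative of the pulled--back form through the Lie derivative along $X$ and is precisely what makes the divergence surface. Combining it with the defining relation $\mathcal{L}_{X}\mu=(\operatorname{div}_{\mu}X)\mu$ gives
$$
\dfrac{\mathrm{d}}{\mathrm{d}t}\Phi_{t}^{\star}\mu=\Phi_{t}^{\star}\bigl((\operatorname{div}_{\mu}X)\,\mu\bigr)=\bigl(\Phi_{t}^{\star}\operatorname{div}_{\mu}X\bigr)\,\Phi_{t}^{\star}\mu.
$$
Substituting this back and changing variables once more, now from $U$ to $\Phi_{t}(U)$, I would arrive at
$$
\dfrac{\mathrm{d}}{\mathrm{d}t}\,m_{\mu}(\Phi_{t}(U))=\int_{U}\bigl(\Phi_{t}^{\star}\operatorname{div}_{\mu}X\bigr)\,\Phi_{t}^{\star}\mu=\int_{\Phi_{t}(U)}\operatorname{div}_{\mu}X\,\mu.
$$
Finally, since $\operatorname{div}_{\mu}X=\operatorname{div}_{\mu}(\nabla^{L/R}_{b}F)=\Delta^{L/R}_{b}F$ by Definition \ref{lap2}, and $\mu$ and $\mathrm{d}m_{\mu}$ integrate scalar functions identically, the claimed formula follows. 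Note that one and the same computation, carried with the superscripts $L/R$ kept throughout, disposes of both cases simultaneously.

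I do not anticipate a genuine obstacle here, as every ingredient is standard differential--geometric machinery. The only two points demanding a word of care are the legitimacy of differentiating under the integral sign, guaranteed by the compactness of $U$ (hence of $\Phi_{t}(U)$ for $t$ in a neighbourhood of any fixed time) together with the smoothness of the flow, and the identity $\dfrac{\mathrm{d}}{\mathrm{d}t}\Phi_{t}^{\star}\mu=\Phi_{t}^{\star}(\mathcal{L}_{X}\mu)$, which is the infinitesimal characterisation of the Lie derivative via its flow and is the precise mechanism by which $\Delta^{L/R}_{b}F$ enters.
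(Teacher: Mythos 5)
Your proposal is correct and follows essentially the same route as the paper's own proof: pull the integral back to the fixed domain $U$, differentiate under the integral sign, apply $\dfrac{\mathrm{d}}{\mathrm{d}t}\Phi_{t}^{\star}\mu=\Phi_{t}^{\star}(\mathcal{L}_{X}\mu)$ together with $\mathcal{L}_{X}\mu=(\operatorname{div}_{\mu}X)\mu$, and then change variables back to $\Phi_{t}^{L/R}(U)$ while identifying $\operatorname{div}_{\mu}(\nabla^{L/R}_{b}F)=\Delta^{L/R}_{b}F$. The only cosmetic difference is that you also spell out the justification for differentiating under the integral sign, which the paper leaves implicit.
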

\begin{proof} For any compact regular domain $U$ contained in the domain of definition of the flow, we have
\begin{align*}
\dfrac{\mathrm{d}}{\mathrm{d}t} m_{\mu}\left(\Phi_{t}^{L/R}(U)\right)&=\dfrac{\mathrm{d}}{\mathrm{d}t}\int_{\Phi_{t}^{L/R}(U)}~\mathrm{d} m_{\mu}=\dfrac{\mathrm{d}}{\mathrm{d}t}\int_{U}~(\Phi_{t}^{L/R})^{\star}\mathrm{d} m_{\mu}=\dfrac{\mathrm{d}}{\mathrm{d}t}\int_{U}~(\Phi_{t}^{L/R})^{\star}\mu\\
&=\int_{U}~\dfrac{\mathrm{d}}{\mathrm{d}t}(\Phi_{t}^{L/R})^{\star}\mu=\int_{U}~(\Phi_{t}^{L/R})^{\star}\mathcal{L}_{\nabla^{L/R}_b F}\mu=\int_{U}~(\Phi_{t}^{L/R})^{\star}\operatorname{div}_{\mu}(\nabla^{L/R}_b F)\mu\\
&=\int_{U}~(\Phi_{t}^{L/R})^{\star}\Delta_{b}^{L/R} F~\mu=\int_{U}~(\Phi_{t}^{L/R})^{\star}\Delta_{b}^{L/R} F~\mathrm{d}m_{\mu}\\
&=\int_{\Phi_{t}^{L/R}(U)}\Delta^{L/R}_{b}F ~\mathrm{d}m_{\mu},
\end{align*} 
and thus we obtained the conclusion.
\end{proof}

\begin{corollary}In the hypothesis of Theorem \ref{flowth}, the following assertions hold.
\begin{itemize}
\item[(i)] If $F$ is left/right--$b-$subharmonic (i.e., $\Delta^{L/R}_{b}F \geq 0$ on $M$) then the function $t\mapsto  m_{\mu}\left(\Phi_{t}^{L/R}(U)\right)$ is increasing. Consequently, as $\Phi_{0}^{L/R}(U)=U$, we get that $m_{\mu}\left(\Phi_{t}^{L/R}(U)\right)\geq m_{\mu}(U)$, for $t\geq 0$. 
\item[(ii)]  If $F$ is let/right--$b-$harmonic (i.e., $\Delta^{L/R}_{b}F = 0$ on $M$) then the function $t\mapsto  m_{\mu}\left(\Phi_{t}^{L/R}(U)\right)$ is constant, and hence we obtain that $m_{\mu}\left(\Phi_{t}^{L/R}(U)\right)= m_{\mu}(U)$, for $t\geq 0$. 

(Recall from Remark \ref{REMIQ} that if $b$ is a symplectic form on $M$, then the $b-$Laplace operators \textbf{defined with respect to the Liouville volume form} $\mu=\Lambda:=\dfrac{(-1)^{[n/2]}}{n!}~b^n$ (where $n=\dfrac{1}{2}\operatorname{dim}M$) \textbf{vanish identically}, i.e., $\Delta^{L/R}_{b}F = 0$ on $M$, \textbf{for any} $F\in\mathcal{C}^{\infty}(M,\mathbb{R})$. Thus, in this case, \textbf{any} $F\in\mathcal{C}^{\infty}(M,\mathbb{R})$ is left/right--$b-$harmonic.)
\item[(iii)]  If $F$ is left/right--$b-$superharmonic (i.e., $\Delta^{L/R}_{b}F \leq  0$ on $M$) then the function $t\mapsto  m_{\mu}\left(\Phi_{t}^{L/R}(U)\right)$ is decreasing, and so it follows that $m_{\mu}\left(\Phi_{t}^{L/R}(U)\right)\leq m_{\mu}(U)$, for $t\geq 0$.
\end{itemize}
\end{corollary}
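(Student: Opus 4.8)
The plan is to read everything off the derivative formula furnished by Theorem \ref{flowth}, namely
$$
\dfrac{\mathrm{d}}{\mathrm{d}t} m_{\mu}\left(\Phi_{t}^{L/R}(U)\right)=\int_{\Phi_{t}^{L/R}(U)}\Delta^{L/R}_{b}F ~\mathrm{d}m_{\mu},
$$
and to transfer the pointwise sign of $\Delta_b^{L/R}F$ to the sign of this integral. The single fact that makes this work is that $m_{\mu}$ is a \emph{positive} measure: it is the Borel measure associated to the volume form $\mu$ via the Riesz representation recalled just before Theorem \ref{green2}, so $\int_{K} h \, \mathrm{d}m_{\mu} \geq 0$ whenever $h \geq 0$ on a measurable set $K$.

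For item (i), I would assume $\Delta_b^{L/R}F \geq 0$ on all of $M$. Then in particular the integrand is nonnegative on each flowed domain $\Phi_t^{L/R}(U)$, so by positivity of $m_{\mu}$ the right--hand side of the displayed formula is $\geq 0$; hence $t \mapsto m_{\mu}(\Phi_t^{L/R}(U))$ has nonnegative derivative and is therefore nondecreasing. Since the flow at time $0$ is the identity, $\Phi_0^{L/R}(U)=U$, and integrating the derivative from $0$ to $t$ (equivalently, invoking monotonicity) yields $m_{\mu}(\Phi_t^{L/R}(U)) \geq m_{\mu}(U)$ for $t \geq 0$. Items (iii) and (ii) then follow by the identical argument with the inequalities adjusted: $\Delta_b^{L/R}F \leq 0$ makes the derivative $\leq 0$, so the volume function is nonincreasing and bounded above by $m_{\mu}(U)$; while $\Delta_b^{L/R}F \equiv 0$ makes the derivative vanish identically, so the function is constant and equal to its value $m_{\mu}(U)$ at $t=0$.

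There is essentially no obstacle here: the corollary is an immediate consequence of Theorem \ref{flowth} combined with the elementary principle that a function with a sign--definite derivative is monotone. The only points worth a word of care are the positivity of $m_{\mu}$, which is what licenses passing from the pointwise sign of $\Delta_b^{L/R}F$ to the sign of its integral over $\Phi_t^{L/R}(U)$, and the parenthetical remark in item (ii): in the symplectic case with $\mu=\Lambda$ the Liouville volume form, Remark \ref{REMIQ} gives $\Delta_b^{L/R}F \equiv 0$ for every $F$, so every smooth function is simultaneously left-- and right--$b$--harmonic and the associated flows are volume preserving, consistently with the conclusion of (ii).
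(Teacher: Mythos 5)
Your proposal is correct and follows exactly the argument the paper intends: the corollary is stated without proof precisely because it reads off immediately from the derivative formula of Theorem \ref{flowth}, the positivity of the measure $m_{\mu}$, and the fact that $\Phi_{0}^{L/R}=\operatorname{Id}_{M}$. Your explicit attention to why the pointwise sign of $\Delta_{b}^{L/R}F$ transfers to the integral, and your handling of the symplectic parenthetical via Remark \ref{REMIQ}, match the paper's implicit reasoning.
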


\subsection{Periodic orbits of left/right--gradient vector fields}

The aim of this subsection is to provide a result concerning the non-existence of periodic orbits of left/right--gradient vector fields, extending to gradient--like vector fields, the well known result for classical gradients. In order to do this we need to introduce first some terminology. 
\begin{definition}
Let $b$ be a geometric structure on a manifold $M$, and $\{\cdot,\cdot\}_{b}$ the associated $b-$bracket. Then $\{\cdot,\cdot\}_{b}$ is called \textbf{positive definite} if $\{F,F\}_{b}(m)>0$, $\forall F\in\mathcal{C}^{\infty}(M,\mathbb{R})$, $\forall m \in M\setminus\operatorname{Crit}(F)$, where $\operatorname{Crit}(F)=\{m\in M \mid \mathrm{d}F_{m}=0\}$. Similarly, $\{\cdot,\cdot\}_{b}$ is called \textbf{negative definite} if $\{F,F\}_{b}(m)<0$, $\forall F\in\mathcal{C}^{\infty}(M,\mathbb{R})$, $\forall m \in M\setminus\operatorname{Crit}(F)$.
\end{definition}
Recall from Definition \ref{simbr} that $\{F,F\}_{b}=\{F,F\}_{b,sym},~\forall F\in\mathcal{C}^{\infty}(M,\mathbb{R})$. Consequently, the $b-$bracket is positive/negative definite if and only if the associated symmetric $b-$bracket is positive/negative definite.

Next, we give the definition of a positive/negative geometric structure.
\begin{definition}
Let $b$ be a geometric structure on a manifold $M$. Then $b$ is called \textbf{positive definite} if $(b(X,X))(m)>0$, for all vector fields $X$ and $m\in M$ such that $X(m)\neq 0$. Similarly, $b$ is called \textbf{negative definite} if $(b(X,X))(m)<0$, for all vector fields $X$ and $m\in M$ such that $X(m)\neq 0$
\end{definition}
Let us recall that given a geometric structure $b$ on a manifold $M$, its symmetric part, $b_{sym}$, is given by $b_{sym}:=\frac{1}{2}(b+b^{op})$, where $b^{op}:\mathfrak{X}(M)\times \mathfrak{X}(M)\rightarrow\mathcal{C}^{\infty}(M,\mathbb{R})$ is the non-degenerate $(0,2)-$tensor field given by $b^{op}(X,Y):=b(Y,X), ~\forall X,Y\in\mathfrak{X}(M)$. Note that if $b_{sym}$ is non-degenerate (i.e., $b_{sym}$ is also a geometric structure on $M$), then $b$ is a positive/negative definite geometric structure if and only if $b_{sym}$ is so. 
\begin{remark}
If $b$ is a positive/negative geometric structure on a manifold $M$, then the associated $b-$braket is positive/negative definite, since $$\{F,F\}_{b}=b(\nabla^{L/R}_{b}F,\nabla^{L/R}_{b}F), ~\forall F\in\mathcal{C}^{\infty}(M,\mathbb{R}).$$
\end{remark}
Let us now state the main result of this subsection.
\begin{theorem}\label{theor1}
Let $b$ be a geometric structure on a manifold $M$, such that the associated $b-$bracket is positive or negative definite. Then for any $F\in\mathcal{C}^{\infty}(M,\mathbb{R})$, non of the vector fields $\nabla_{b}^{L}F$ and $\nabla_{b}^{R}F$ admit non--trivial periodic orbits.
\end{theorem}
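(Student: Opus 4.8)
The plan is to adapt the classical argument for gradient flows, exhibiting $F$ as a strict Lyapunov function along the flow of $\nabla^{L/R}_{b}F$. The key computation is this: if $\gamma$ is an integral curve of $\nabla^{L}_{b}F$, so that $\dot{\gamma}(t)=(\nabla^{L}_{b}F)(\gamma(t))$, then by the chain rule together with the defining relation $b(\nabla^{L}_{b}F,X)=\mathrm{d}F\cdot X$ (evaluated at $X=\nabla^{L}_{b}F$),
\begin{align*}
\frac{\mathrm{d}}{\mathrm{d}t}F(\gamma(t))=\mathrm{d}F\cdot\dot{\gamma}(t)=\mathrm{d}F\cdot(\nabla^{L}_{b}F)(\gamma(t))=b(\nabla^{L}_{b}F,\nabla^{L}_{b}F)(\gamma(t))=\{F,F\}_{b}(\gamma(t)).
\end{align*}
The identical conclusion holds for $\nabla^{R}_{b}F$: using $b(X,\nabla^{R}_{b}F)=\mathrm{d}F\cdot X$ and the fact that $\{F,F\}_{b}=b(\nabla^{R}_{b}F,\nabla^{R}_{b}F)$ (Proposition \ref{fimpo}), one finds $\frac{\mathrm{d}}{\mathrm{d}t}F(\gamma(t))=\{F,F\}_{b}(\gamma(t))$ for integral curves of the right--gradient as well. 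Thus both cases reduce to the same identity, and I would treat them uniformly.

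Next I would assume $\{\cdot,\cdot\}_{b}$ is positive definite, the negative definite case being identical after a sign change. The crucial preliminary observation is the equivalence $\{F,F\}_{b}(m)=0\iff m\in\operatorname{Crit}(F)$: indeed, if $\mathrm{d}F_{m}=0$ then $(\nabla^{L/R}_{b}F)(m)=0$ by the non--degeneracy of $b$, whence $\{F,F\}_{b}(m)=b(\nabla^{L/R}_{b}F,\nabla^{L/R}_{b}F)(m)=0$; conversely, positive definiteness gives $\{F,F\}_{b}(m)>0$ for every $m\notin\operatorname{Crit}(F)$. In particular, the displayed identity shows that $t\mapsto F(\gamma(t))$ is non--decreasing, and is strictly increasing on any time interval along which $\gamma$ avoids $\operatorname{Crit}(F)$.

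Now I would argue by contradiction: suppose $\gamma$ is a \emph{non--trivial} periodic orbit of period $T>0$. Integrating the identity over a single period yields
\begin{align*}
0=F(\gamma(T))-F(\gamma(0))=\int_{0}^{T}\{F,F\}_{b}(\gamma(t))\,\mathrm{d}t.
\end{align*}
Since the integrand is continuous and, by positive definiteness, non--negative, it must vanish identically, so $\{F,F\}_{b}(\gamma(t))=0$ for all $t$. By the equivalence above this forces $\gamma(t)\in\operatorname{Crit}(F)$ for every $t$, hence $(\nabla^{L/R}_{b}F)(\gamma(t))=0$, and therefore $\dot{\gamma}(t)=0$ for all $t$. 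Thus $\gamma$ is a constant curve, contradicting the non--triviality of the orbit.

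The argument is entirely self--contained and presents no serious analytic obstacle. The only point demanding care is the equivalence $\{F,F\}_{b}(m)=0\iff m\in\operatorname{Crit}(F)$, since it is precisely this characterization that upgrades the vanishing of the period integral into the vanishing of the velocity $\dot{\gamma}$; without it, positive definiteness alone would only rule out orbits meeting $M\setminus\operatorname{Crit}(F)$ on a set of positive measure, rather than excluding genuine periodic orbits outright.
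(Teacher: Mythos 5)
Your proof is correct and follows essentially the same route as the paper's: both integrate the identity $\frac{\mathrm{d}}{\mathrm{d}t}F(\gamma(t))=\{F,F\}_{b}(\gamma(t))$ over one period and derive a contradiction with the definiteness of the $b$-bracket. Your handling of the final step --- observing that the continuous non-negative integrand must vanish identically, which forces $\gamma$ into $\operatorname{Crit}(F)$ and hence $\dot{\gamma}\equiv 0$ --- merely spells out the detail the paper leaves implicit when it asserts that the vanishing integral ``contradicts'' positive or negative definiteness.
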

\begin{proof}
Suppose that $\gamma:[0,T]\rightarrow M$ is a non--trivial $T-$periodic solution (i.e., $\gamma$ is not a constant periodic function) of the vector field $\nabla_{b}^{L/R}F$. Then we get
\begin{align*}
0&=F(\gamma(T))-F(\gamma(0))=\int_{0}^{T}\dfrac{\mathrm{d}}{\mathrm{d}t}F(\gamma(t))\mathrm{d}t=\int_{0}^{T}\mathrm{d}F_{\gamma(t)}\cdot \dot{\gamma}(t)\mathrm{d}t\\
&=\int_{0}^{T}\mathrm{d}F_{\gamma(t)}\cdot \nabla_{b}^{L/R}F(\gamma(t))\mathrm{d}t=\int_{0}^{T}b_{\gamma(t)}(\nabla_{b}^{L/R}F(\gamma(t)),\nabla_{b}^{L/R}F(\gamma(t)))\mathrm{d}t\\
&=\int_{0}^{T}(b(\nabla_{b}^{L/R}F,\nabla_{b}^{L/R}F))(\gamma(t))\mathrm{d}t=\int_{0}^{T}\{F,F\}_{b}(\gamma(t))\mathrm{d}t,
\end{align*}
which contradicts the fact that $\{\cdot,\cdot\}_{b}$ is positive or negative definite. Thus, non of the vector fields $\nabla_{b}^{L}f$ and $\nabla_{b}^{R}f$ admit non--trivial periodic orbits.
\end{proof}

\begin{corollary}
When $b$ is a Riemannian metric on $M$, Theorem \ref{theor1} reduces to the classical result which states that for any $F\in\mathcal{C}^{\infty}(M,\mathbb{R})$ the associated gradient vector field, $\nabla_{b}F=\nabla_{b}^{L}F=\nabla_{b}^{R}F$, does not admits non--trivial periodic orbits.
\end{corollary}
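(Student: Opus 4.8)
The plan is to verify that a Riemannian metric satisfies the hypotheses of Theorem \ref{theor1}, and that the resulting conclusion coincides with the classical statement. First I would invoke Remark \ref{REMI}: since a Riemannian metric $b$ is in particular a symmetric geometric structure, one has $\nabla_{b}^{L}=\nabla_{b}^{R}=\nabla_{b}$, so for each $F\in\mathcal{C}^{\infty}(M,\mathbb{R})$ there is a single gradient vector field $\nabla_{b}F$, and the two assertions in the conclusion of Theorem \ref{theor1} collapse into a single statement about $\nabla_{b}F$.

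Next I would check that $b$ is a positive definite geometric structure in the sense of the definition preceding Theorem \ref{theor1}. This is precisely the defining property of a Riemannian metric: $(b(X,X))(m)>0$ for every vector field $X$ and every point $m$ with $X(m)\neq 0$. By the Remark situated immediately before Theorem \ref{theor1}, positive definiteness of $b$ forces the associated $b$-bracket to be positive definite, because $\{F,F\}_{b}=b(\nabla_{b}^{L/R}F,\nabla_{b}^{L/R}F)$ for every $F\in\mathcal{C}^{\infty}(M,\mathbb{R})$.

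With both ingredients in place, I would apply Theorem \ref{theor1} directly: for every $F\in\mathcal{C}^{\infty}(M,\mathbb{R})$ neither $\nabla_{b}^{L}F$ nor $\nabla_{b}^{R}F$ admits a non--trivial periodic orbit, and by the identification $\nabla_{b}^{L}F=\nabla_{b}^{R}F=\nabla_{b}F$ this is exactly the classical assertion that the Riemannian gradient $\nabla_{b}F$ has no non--trivial periodic orbit. There is genuinely nothing further to prove beyond matching definitions, so no real obstacle arises; the only point requiring attention is to record that positive definiteness of $b$ as a metric is the very condition demanded by Theorem \ref{theor1}, which makes the specialization legitimate rather than merely formal.
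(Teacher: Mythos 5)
Your proposal is correct and follows exactly the reasoning the paper intends: the corollary is stated without proof as an immediate specialization of Theorem \ref{theor1}, and your argument simply makes explicit the two required verifications, namely that a Riemannian metric is a positive definite geometric structure (hence, by the remark preceding the theorem, the $b$-bracket is positive definite) and that symmetry of $b$ collapses $\nabla_{b}^{L}F$ and $\nabla_{b}^{R}F$ into the single Riemannian gradient $\nabla_{b}F$ via Remark \ref{REMI}. Nothing is missing; this is the same route, just written out in full.
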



\bigskip
\bigskip

\noindent {\sc R.M. Tudoran}\\
West University of Timi\c soara\\
Faculty of Mathematics and Computer Science\\
Department of Mathematics\\
Blvd. Vasile P\^arvan, No. 4\\
300223 - Timi\c soara, Rom\^ania.\\
E-mail: {\sf razvan.tudoran@e-uvt.ro}\\
\medskip

\end{document}